\newtheorem{theorem}{Theorem}[section]
\newtheorem{lemma}[theorem]{Lemma}
\theoremstyle{definition}
\newtheorem{definition}{Definition}[section]
\theoremstyle{remark}
\newtheorem{remark}{Remark}[section]
\newcommand{\RE}{\text{Re}}
\newcommand{\IM}{\text{Im}}
\newcommand{\loc}{\text{loc}}
\newcommand{\osc}{\text{osc}}
\newcommand{\dist}{\text{dist}}
\newcommand{\mcl}{{\mathcal{L}}}
\newcommand{\mbr}{{\mathbb{R}}}
\newcommand{\mbd}{{\mathbb{D}}}
\newcommand{\mbs}{{\mathbb{S}}}
\numberwithin{equation}{subsection} 
\begin{document}
\title[]
{optimal extensions of conformal mappings from the unit disk to cardioid-type domains}

\author{Haiqing Xu}

\keywords{Extensions; Homeomorphisms of finite distortion; Inner cusp.}

\begin{abstract}
The conformal mapping $f(z)=(z+1)^2 $ from $\mathbb{D}$ onto the standard cardioid has a homeomorphic extension of finite distortion to entire $\mathbb{R}^2 .$ We study the optimal regularity of such extensions, in terms of the integrability degree of the distortion and of the derivatives, and these for the inverse. We generalize all outcomes to the case of conformal mappings from $\mathbb{D}$ onto cardioid-type domains. 
\end{abstract}

\maketitle
\section{Introduction}
The standard cardioid domain 
\begin{equation}\label{Delta}
\Delta = \{(x,y) \in \mathbb{R}^2 : (x^2 +y^2)^2 -4x (x^2 +y^2) -4y^2 <0\}
\end{equation}
is the image of the unit disk $\mathbb{D}$ under the conformal mapping $g(z)=(z+1)^2 .$  
Since the origin is an inner-cusp point of $\partial \Delta ,$ the Ahlfors' three-point property fails, and hence $\partial \Delta$ is not a quasicircle. Therefore the preceding conformal mapping does not possess a quasiconformal extension to the entire plane. 
However, there is a homeomorphic extension $f : \mbr^2 \rightarrow \mbr^2 $ by the Schoenflies theorem, see \cite[Theorem 10.4]{Moise 1977}. 
Recall that homeomorphisms of finite distortion 
form a much larger class of homeomorphisms than quasiconformal mappings.   
A natural question arises: can we extend $g$ as a homeomorphism of finite distortion?
If we can, how good an extension can we find?
Our first result gives a rather complete answer.

\begin{theorem}\label{cardioid K_f}
Let $\mathcal{F}$ be the collection of homeomorphisms $f : \mathbb{R}^2 \rightarrow \mathbb{R}^2$ of finite distortion such that $f(z)=(z+1)^2$ for all $z \in \mathbb{D} .$ Then $\mathcal{F} \neq \emptyset .$
Moreover 
\begin{equation}\label{cardioid K_f :0}
\sup \{p \in [1, +\infty): f \in \mathcal{F} \cap W^{1,p} _{\loc} (\mbr^2 ,\mbr^2)\} =+\infty,
\end{equation}
\begin{equation}\label{cardioid K_f :1}
\sup \{q \in (0,+\infty): f \in \mathcal{F},\ K_f \in L^q _{\loc} (\mathbb{R}^2)\} = 2 ,
\end{equation} 
\begin{align}\label{cardioid K_f :2}
& \sup \{q \in (0,+\infty): f \in \mathcal{F}  \cap W^{1,p} _{\loc} (\mathbb{R}^2 , \mbr^2)\mbox{ for some }p>1 \mbox{ and } K_f \in L^q _{\loc} (\mathbb{R}^2)\} \notag \\
& = 1 ,
\end{align}
\begin{equation}\label{cardioid K_f :3} 
\sup \{p \in [1,+\infty):  f \in \mathcal{F},\ f^{-1} \in W^{1,p} _{\loc}(\mathbb{R}^2 , \mbr^2 ) \} = \frac{5}{2}
\end{equation}
and 
\begin{equation}\label{cardioid K_f :4} 
\sup \{q \in (0,+\infty): f \in \mathcal{F},\ K_{f^{-1}} \in L^q _{\loc} (\mathbb{R}^2)\} =  5 .
\end{equation}
\end{theorem}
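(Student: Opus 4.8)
The plan is to prove all five assertions at once, since each localizes to a single singular point: the inner cusp of $\partial\Delta$ at the origin, whose $g$-preimage is $z=-1$. Setting $w=z+1$, near $-1$ one has $g(z)=w^{2}$, so $g$ carries a point at distance $\rho$ from $-1$ to a point at distance $\asymp\rho^{2}$ from $0$, and carries the arc of $\partial\mathbb{D}$ parametrized by arclength $t$ from $-1$ to $g\approx(-t^{2},-t^{3})$; thus $\partial\Delta$ has a cusp of order $3/2$ at $0$, the relevant piece of $\Delta^{c}$ is the cuspidal region $V=\{-\delta<X<0,\ |Y|<(-X)^{3/2}(1+o(1))\}$, and at depth $d$ the cusp has width $\asymp d^{3/2}$. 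Away from $-1$ both boundaries are smooth and regular, so $g|_{\partial\mathbb{D}}$ extends across a collar of $\Delta^{c}$ to a bilipschitz (indeed quasiconformal) map; hence everything reduces to mapping a half-disk neighbourhood $U\subset\mathbb{D}^{c}$ of $-1$ homeomorphically onto $V$ with the boundary values prescribed by $g$. I would begin by fixing cusp-adapted coordinates on $U$ and $V$ and recording this boundary correspondence.

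\emph{Constructions (giving $\mathcal F\neq\emptyset$ and all lower bounds).} On $U$ I would use the explicit model $f(w)\approx\bigl(-|w|^{2},\,|w|^{3}h(\arg w)\bigr)$, with $h$ a fixed bilipschitz homeomorphism of the angular interval onto $(-1,1)$, perturbed by lower-order terms so that it matches $g$ on $\partial\mathbb{D}\cap U$, and glued through the transition annulus to the quasiconformal extension built elsewhere. Computing $Df,J_f,K_f$ in these coordinates gives $|Df|\lesssim|w|$, so $f\in W^{1,p}_{\loc}$ for every finite $p$ — this proves \eqref{cardioid K_f :0} — and $K_f\asymp|w|^{-1}$, so $K_f\in L^{q}_{\loc}$ for every $q<2$, the lower bound in \eqref{cardioid K_f :1}. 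On the target side $|Df^{-1}|\asymp\dist(\cdot,0)^{-1}$ and $K_{f^{-1}}\asymp\dist(\cdot,0)^{-1/2}$ on $V$, and integrating against the cusp's area element $\asymp d^{3/2}\,dd$ gives $f^{-1}\in W^{1,p}_{\loc}$ for $p<5/2$ and $K_{f^{-1}}\in L^{q}_{\loc}$ for $q<5$, the lower bounds in \eqref{cardioid K_f :3} and \eqref{cardioid K_f :4}. For \eqref{cardioid K_f :2} I would run the same scheme with a variant of the model that trades a controlled growth of $|Df|$ for better control of $K_f$, producing for every $q<1$ an extension in $W^{1,p}_{\loc}$ with $p>1$ and $K_f\in L^{q}_{\loc}$.

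\emph{Upper bounds (sharpness).} Work with the dyadic quadrilaterals $A_j=\{2^{-j-1}<|w|<2^{-j}\}\cap\mathbb{D}^{c}$ at $-1$: each $A_j$ is essentially a square (conformal modulus $\asymp1$, area $\asymp4^{-j}$), while its image $f(A_j)$ is the slab of $V$ between depths $\asymp4^{-j-1}$ and $\asymp4^{-j}$, a thin quadrilateral of modulus $\asymp2^{j}$ and area $\asymp2^{-5j}$. Since $f$ must map a curve family of modulus $\asymp1$ in $A_j$ onto the cross-cut family of $f(A_j)$, which has modulus $\asymp2^{j}$, the modulus/capacity inequalities for mappings of finite distortion (the finite-distortion replacements for the quasiconformal $K_{O}$–$K_{I}$ inequalities, since the maps at hand need not lie in $W^{1,2}_{\loc}$) give $\frac{1}{|A_j|}\int_{A_j}K_f\gtrsim2^{j}$; by Jensen $\int_{A_j}K_f^{q}\gtrsim|A_j|\,2^{qj}\asymp2^{(q-2)j}$, and summing over $j$ shows $K_f\notin L^{q}_{\loc}$ for $q\ge2$, which with the construction proves \eqref{cardioid K_f :1}. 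The symmetric estimates on the target give, via length–area, $\int_{f(A_j)}|Df^{-1}|^{p}\gtrsim2^{(2p-5)j}$ (each constant-depth cross-cut of $f(A_j)$, of length $\asymp8^{-j}$, pulls back to a cross-cut of $A_j$ of length $\gtrsim2^{-j}$), and, via the conjugate modulus inequality for $f^{-1}$, $\int_{f(A_j)}K_{f^{-1}}^{q}\gtrsim2^{(q-5)j}$; summing produces the sharp exponents $5/2$ in \eqref{cardioid K_f :3} and $5$ in \eqref{cardioid K_f :4}.

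The subtle point — and the step I expect to be the main obstacle — is \eqref{cardioid K_f :2}: that Sobolev regularity strictly beyond $W^{1,1}_{\loc}$ is incompatible with $K_f\in L^{q}_{\loc}$ once $q\ge1$. To prove the upper bound here one must combine, on each $A_j$, three constraints at the same time: the Sobolev bound $\sum_j\int_{A_j}|Df|^{p}<\infty$; the ``length forces size'' bound $\int_{S_r}|Df|\,ds\gtrsim r^{3}$, coming from the fact that the image of the circle $S_r=\partial B(-1,r)\cap\mathbb{D}^{c}$ must cross the cusp; and the thin-target bound $\int_{A_j}J_f\lesssim2^{-5j}$. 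Writing $|Df|^{2}=K_fJ_f$ and splitting by Hölder with exponents tuned to $p$, the extra integrability of $|Df|$ upgrades the distortion lower bound on $A_j$ beyond what the bare modulus inequality provides, forcing $\sum_j\int_{A_j}K_f^{q}=\infty$ for every $q\ge1$; matched with the $q<1$ construction this pins the supremum at $1$. The real difficulty is to balance these three estimates optimally, tracking exactly how the Sobolev exponent $p>1$, the cusp order $3/2$ and the distortion exponent $q$ interact, and to carry the entire modulus/capacity scheme out inside the finite-distortion theory (where the classical quasiconformal machinery is not directly available), while simultaneously verifying that the constructions meet each threshold exactly.
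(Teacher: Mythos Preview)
Your overall strategy matches the paper's: an explicit ``radial'' construction $w\mapsto(-|w|^2,\,|w|^3 h(\arg w))$ on a neighbourhood of $-1$ (glued to a bi-Lipschitz extension elsewhere), together with dyadic-scale capacity estimates for the upper bounds. The paper writes the construction more carefully as a composition of four explicit diffeomorphisms $f_3^{-1}\circ f_4^{-1}\circ f_2\circ f_1^{-1}$ (polar change, angular rescaling, the key anisotropic stretch $f_4$, and a final shear), but the resulting estimates $|Df|\lesssim|w|$, $K_f\asymp|w|^{-1}$, $|Df^{-1}|\asymp\mathrm{dist}^{-1}$, $K_{f^{-1}}\asymp\mathrm{dist}^{-1/2}$ are exactly yours. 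Likewise, your modulus--Jensen scheme on $A_j$ is equivalent to the paper's test-function-and-H\"older argument: the paper picks an explicit Lipschitz $v$, sets $u=v\circ f^{\pm1}$, and splits
\[
\int|Du|^p\le\Bigl(\int|Dv|^2 J\Bigr)^{p/2}\Bigl(\int K^{p/(2-p)}\Bigr)^{(2-p)/2},
\]
which yields the $L^q$ lower bound with $q=p/(2-p)$ directly, without first obtaining a $q=1$ estimate and then applying Jensen. This is slightly more robust, since it never invokes a modulus inequality that might require $W^{1,2}_{\mathrm{loc}}$.

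The one substantive divergence is the upper bound in \eqref{cardioid K_f :2}. The paper does not attempt your direct three-constraint H\"older argument on the $A_j$. Instead (see Lemma~\ref{theorem Df+K_f: necessary} for the general $\Delta_s$ version) it invokes the inverse-regularity theorem of Hencl--Koskela: if $f\in W^{1,p}_{\mathrm{loc}}$ with $p>1$ and $K_f\in L^q_{\mathrm{loc}}$, then $f^{-1}\in W^{1,r}_{\mathrm{loc}}$ with $r=\frac{(q+1)p-2q}{p-q}$; combining this with the already-proved bound on $r$ for $f^{-1}$ forces the constraint on $q$. Your direct approach is not wrong in spirit---indeed the H\"older manipulation you gesture at, carried out via $|Df^{-1}(f(x))|=K_f(x)/|Df(x)|$ and a change of variables, \emph{is} the proof of that Hencl--Koskela theorem---but you would effectively be reproving it inside the argument. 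The paper's route is cleaner: it black-boxes the inverse regularity and reduces \eqref{cardioid K_f :2} to the upper bound already established for \eqref{cardioid K_f :3}.
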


The cardioid curve $\partial \Delta$ contains an inner-cusp point of asymptotic polynomial degree $3/2 .$ 
Motivated by this, we introduce a family of cardioid-type domains $\Delta_s $ with degree $s >1 ,$ see \eqref{definition of M_s and Delta_s}.
Our second result is an analog of Theorem \ref{cardioid K_f}. 

\begin{theorem}\label{theorem K_f}
Let $g$ be a conformal map from $\mathbb{D}$ onto $\Delta_s ,$ where $\Delta_s $ is defined in \eqref{definition of M_s and Delta_s} and $s >1 .$ 
Suppose that $\mathcal{F}_s (g)$ is the collection of homeomorphisms $f : \mathbb{R}^2 \rightarrow \mathbb{R}^2$ of finite distortion such that $f|_{\mathbb{D}} =g .$ Then $\mathcal{F}_s (g)\neq \emptyset .$  
Moreover 
\begin{equation}\label{theorem |df|}
\sup \{p \in [1, +\infty): f \in \mathcal{F}_s (g) \cap W^{1,p} _{\loc} (\mbr^2 ,\mbr^2)\} =+\infty,
\end{equation}
\begin{equation}\label{theorem K_f: 1}
\sup \{q \in (0,+\infty): f \in \mathcal{F}_s (g),\ K_f \in L^q _{\loc} (\mathbb{R}^2)\} = \max \left\{\frac{1}{s-1},1 \right\},
\end{equation}
\begin{align}\label{theorem Df+K_f :1}
& \sup \{q \in (0, +\infty): f \in \mathcal{F}_s (g) \cap W^{1,p} _{\loc} (\mathbb{R}^2, \mbr^2) \mbox{ for some } p>1 \mbox{ and } K_f \in L^q _{\loc} (\mathbb{R}^2)\} \notag \\
 = & \max \left\{\frac{1}{s-1}, \frac{3p}{(2s-1)p+4-2s} \right\},
\end{align}
\begin{equation}\label{theorem f^-1 : 1} 
\sup \{p \in [1,+\infty):  f \in \mathcal{F}_s (g),\ f^{-1} \in W^{1,p} _{\loc}(\mathbb{R}^2 , \mbr^2) \} = \frac{2(s+1)}{2s-1}
\end{equation}
and 
\begin{equation}\label{theorem f^-1 : 2} 
\sup \{q \in (0,+\infty): f \in \mathcal{F}_s (g) ,\  K_{f^{-1}} \in L^q _{\loc} (\mathbb{R}^2)\} =  \frac{s+1}{s-1} .
\end{equation}
\end{theorem}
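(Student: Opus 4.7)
The plan is to treat Theorem \ref{theorem K_f} as a one-parameter deformation of Theorem \ref{cardioid K_f}: every sup is governed by the local behaviour of $g$ and of any extension near the single inner-cusp point of $\partial\Delta_s$, and the exponent $s>1$ enters as a scalar parameter in the cuspidal asymptotics. The first step is to invoke Warschawski/Lehman-type boundary estimates for conformal maps onto cuspidal domains to obtain an explicit leading-order expansion of $g$ near the preimage $\zeta_0\in\partial\mathbb{D}$ of the cusp tip: in suitable local coordinates $g$ maps the circular sector $\{|\xi|<r\}\cap\mathbb{D}$ to an interior cusp of $\Delta_s$ of axial length $\sim r$ and half-width $\sim r^{s}$. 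All subsequent integrals will reduce to one-dimensional power integrals in this cuspidal radius, so the thresholds $1/(s-1)$, $3p/((2s-1)p+4-2s)$, $2(s+1)/(2s-1)$ and $(s+1)/(s-1)$ arise as the critical convergence exponents in $s$ (and in $p$, $q$).

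To realise each of the five sup's (lower bound direction), I would exhibit an explicit one-parameter family of $f\in\mathcal{F}_s(g)$: $f=g$ on $\mathbb{D}$, $f=\mathrm{id}$ outside a large ball, and on the annular region in between $f$ is defined in cuspidal coordinates by a power stretch $(u,v)\mapsto(u,\varphi_\alpha(u)v)$ with $\varphi_\alpha(u)\sim u^{\alpha}$, matched smoothly to a bi-Lipschitz diffeomorphism away from $\zeta_0$. The exterior cuspidal finger of $\mbr^2\setminus\overline{\Delta_s}$, of width $\sim u^{s}$, is thereby filled by the image of an exterior cuspidal tube whose width is a chosen power of $u$, so $|Df|$, $J_f$ and $K_f$ become explicit monomials in $u$ and $v$, and the $L^p$ and $L^q$ norms reduce to elementary power integrals whose convergence thresholds in $\alpha$ are immediate; optimising $\alpha$ for each quantity in turn yields the stated sup's, and \eqref{theorem |df|} is obtained by pushing $\alpha\to\infty$ in the construction (so the $W^{1,p}$ integral stays finite for arbitrarily large $p$). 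For the upper bound direction (optimality), the plan is a modulus/capacity argument at the cusp: applying the Koskela--Onninen modulus inequality $\mathrm{Mod}(f(\Gamma))\le\int K_f\rho^{2}\,dx$ (and its counterpart for $f^{-1}$) to curvilinear dyadic rings around the cusp tip, the fixed conformal boundary correspondence $f|_{\mathbb{D}}=g$ forces the preimage rings in $\mbr^2\setminus\overline{\mathbb{D}}$ to have uniformly bounded modulus, whence the $L^q$ norms of $K_f$ (resp.\ $K_{f^{-1}}$) and the $L^p$ norms of $|Df|$, $|Df^{-1}|$ must diverge at the stated dyadic rate. H\"older pairing against $J_f$, through the pointwise identity $|Df|^{2}=K_f J_f$, then converts these dyadic lower bounds into the sharp upper bounds matching the construction.

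The main obstacle I expect is the joint bound \eqref{theorem Df+K_f :1}: its right-hand side is a non-trivial rational function of both $p$ and $s$ obtained by optimising the H\"older exponent in the pairing between $|Df|^{p}$ and $K_f^{q}$ through $J_f$, so both the construction and the optimality proof must saturate the \emph{same} extremal cuspidal profile, which pins down $\alpha$ as a specific function of $p$ and $s$. A secondary subtlety is the regime change encoded in the $\max$ in \eqref{theorem K_f: 1} and \eqref{theorem Df+K_f :1}: for $s$ close to $1$ the purely geometric cusp threshold $1/(s-1)$ is binding, whereas for larger $s$ the global constraint $1$ (respectively $3p/((2s-1)p+4-2s)$) takes over, and the two regimes need to be handled with separate explicit extensions and separate optimality arguments, glued at the crossover $s=2$ (respectively at the crossover value of $p$).
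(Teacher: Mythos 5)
Your overall strategy is aligned with the paper in spirit: reduce everything to the local model at the single cusp, use dyadic rings for the sharp exponents, prove optimality by a modulus/capacity-type argument paired with H\"older through $J_f$, and exhibit explicit extensions near the cusp for the matching lower bounds. Two points diverge meaningfully and the second is a genuine gap.

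First, a structural difference. You propose Warschawski/Lehman boundary asymptotics to control $g$ near the cusp preimage. The paper avoids this entirely: by construction $\Delta_s$ is the image of a Dini-smooth domain $M_s$ under $z\mapsto z^2$, so the Riemann map $g_s:\mathbb{D}\to M_s$ is bi-Lipschitz up to the boundary (Lemma \ref{lemma: g_s and g^c _s}), extends to a global bi-Lipschitz $G_s$ by Tukia's theorem, and the whole problem is conjugated to the model class $\mathcal{E}_s$ of homeomorphic extensions of $z^2|_{\overline{M_s}}$. This bypasses asymptotic expansions and the error-term bookkeeping they carry; your route could be made to work but is heavier for the same output.

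Second, and more seriously, your proposed construction is a pure anisotropic power stretch $(u,v)\mapsto(u,\varphi_\alpha(u)v)$, $\varphi_\alpha(u)\sim u^\alpha$, with $\alpha$ optimised. This is essentially the paper's ``simple'' map $f_4(u,v)=(\eta(u),\tfrac{\sigma_t}{2t^{2s}}v)$, which yields $K_{F_{2^{-j}}}\approx 2^{2j(s-1)}$ on a set of measure $\approx 2^{-2j}$ and therefore only reaches $K_E\in L^q_{\loc}$ for $q<1/(s-1)$. That is the optimal threshold when $s\in(1,2)$, but for $s\ge 2$ the claimed sup in \eqref{theorem K_f: 1} is $1$, and for \eqref{theorem Df+K_f :1} the claimed sup involves the rational function $3p/((2s-1)p+4-2s)$; neither can be reached by any single power stretch, no matter how $\alpha$ is tuned. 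The paper's key device here is to make the map nearly isometric on the bulk of the cusp rectangle and concentrate \emph{all} the distortion on a boundary frame of thickness $\delta_t$, chosen $\delta_t=\exp(-t^{-1})$ for \eqref{theorem K_f: 1} (any $q<1$) and $\delta_t=t^{(p+2)/(p-1)}\log^{p/(p-1)}(t^{-1})$ for \eqref{theorem Df+K_f :1} (balancing $|Df|^p$ against $K_f^q$). This ``thin high-distortion collar'' idea is qualitatively different from a power law in $u$, and without it your construction stops strictly short of the stated thresholds exactly in the regime your own proposal flags as the hard case. Relatedly, \eqref{theorem |df|} does not require ``pushing $\alpha\to\infty$'': the same fixed explicit extension already lies in $W^{1,p}_{\loc}$ for every $p$, because $|DE|$ is bounded on each dyadic ring and the ring measures are summable.
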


Extendability questions similar to Theorem \ref{theorem K_f}
have also been studied in \cite{Guo 2014 Publ. Mat., Guo 2015 Proc. Amer. Math. Soc., Koskela 2007 Publ. Mat.}.

In Section $2$, we recall some basic definitions and facts. We also introduce auxiliary mappings and domains. In Section $3$, we 
give upper bounds for integrability degrees of potential extensions.
Section $4$ is devoted to the proof of Theorem \ref{theorem K_f}. In Section $5,$ we prove Theorem \ref{cardioid K_f}.

\section{Preliminaries}

\subsection{Notation}
By $s \gg  1$ and $t \ll  1$ we mean that $s$ is sufficiently large and $t$ is sufficiently small, respectively.
By $f \lesssim  g$ we mean that there exists a constant $M > 0$ such that $f(x) \le  Mg(x)$ for every $x$. We write $f \approx g$ if both $f \lesssim  g$ and $g \lesssim  f $ hold. 
By $\mathcal{L}^2$ (respectively $\mathcal{L}^1$) we mean the $2$-dimensional ($1$-dimensional) Lebesgue measure.
Furthermore we refer to the disk with center $P$ and radius $r $ by $B(P,r)  ,$ and $S(P,r) = \partial B(P,r) .$ For a set $E \subset \mathbb{R}^2$ we denote by $\overline{E}$ the closure of $E .$ If $A \in \mathbb{R}^{2 \times 2}$ is a matrix, $adj A$ is the adjoint matrix of $A .$

\subsection{Basic definitions and facts}\label{definitions}

\begin{definition}
Let $\Omega \subset \mathbb{R}^2$ and $\Omega' \subset \mathbb{R}^2$ be domains. A homeomorphism $f : \Omega \rightarrow \Omega'$ is called $K$-quasiconformal if $f \in W^{1,2} _{\loc} (\Omega, \mbr^2)$ and if there is a constant $K \ge 1 $ such that  
\begin{equation*}
|Df(z)|^2 \le K J_f (z)
\end{equation*}
holds for $\mathcal{L}^2$-a.e. $z \in \Omega .$
\end{definition}

\begin{definition}
Let $\Omega \subset \mathbb{R}^2$ be a domain.
We say that a mapping $f: \Omega \rightarrow \mathbb{R}^2$ has finite distortion if $f \in W^{1,1} _{\loc} (\Omega , \mathbb{R}^2),$ $J_f \in L^1 _{\loc} (\Omega)$ and 
\begin{equation}\label{MFD :1}
|Df (z)|^2 \le K_f(z) J_f (z) \qquad \mcl^2 \mbox{-a.e. } z \in \Omega,
\end{equation}
where 
\begin{equation*}
K_f (z) = 
\begin{cases}
\frac{|Df(z)|^2}{J_f (z)} & \mbox{for all } z \in \{J_f >0 \}, \\
1& \mbox{for all } z \in \{J_f =0 \}.
\end{cases}
\end{equation*}
\end{definition}

\begin{definition}
Given $A \subset \mathbb{R}^2 ,$ a map $f : A \rightarrow \mathbb{R}^2$ is called an $(l,L)$-bi-Lipschitz mapping if
$0 <l \le L <\infty$ and 
\begin{equation*}
l |x-y| \le |f(x) -f(y)| \le L|x-y|
\end{equation*}
for all $x, y \in A .$ 
\end{definition}
If $\Omega \subset \mbr^2$ is a domain and $f: \Omega \rightarrow \mbr^2$ is an orientation-preserving bi-Lipschiz mapping, then $f$ is quasiconformal. 

\begin{definition}
Given a function $\varphi$ defined on set $A \subset \mathbb{R}^2,$ its modulus of continuity is defined as
\begin{equation*}
\omega(\delta) \equiv \omega(\delta, \varphi, A) = \sup\{|\varphi(z_1) -\varphi(z_2)|: z_1 ,z_2 \in A,\ |z_1 -z_2| \le \delta\}
\end{equation*}
for $\delta \ge 0.$
Then $\varphi$ is called Dini-continuous if 
\begin{equation*}
\int_{0} ^{\pi} \frac{\omega(t)}{t}\, dt < \infty,
\end{equation*}
where the integration bound $\pi$ can be replaced by any positive constant.

We say that a curve $C$ is $\mathit{Dini}$-$\mathit{smooth}$ if it has a parametrization $\alpha(t)$ for $t \in [0,2\pi]$ so that $\alpha'(t) \neq 0$ for all $ t \in [0,2\pi]$ and $\alpha'$ is Dini-continuous. 
\end{definition}

\begin{definition}
Let $\Omega \subset \mathbb{R}^2$ be open and $f: \Omega \rightarrow \mathbb{R}^2$ be a mapping. We say that $f$ satisfies the Lusin ($N$) condition if $\mathcal{L}^2 (f(E))=0$ for any $E \subset \Omega$ with $\mathcal{L}^2 (E)=0 .$
Similarly, $f$ satisfies the Lusin ($N^{-1}$) condition if $\mathcal{L}^2 (f^{-1}(E))=0$ for any $E \subset \Omega$ with $\mathcal{L}^2 (E)=0 .$ 
\end{definition}

\begin{lemma}{\rm{(\cite[Theorem A.35]{Hencl 2014})}}\label{change of variables}\label{lemma A}
Let $\Omega \subset \mathbb{R}^2$ be open and $f \in W^{1,1} _{\loc} (\Omega, \mathbb{R}^2).$ Suppose that $\eta$ is a nonnegative Borel measurable function on $\mathbb{R}^2 .$ Then 
\begin{equation}\label{area formula: 1}
\int_{\Omega} \eta(f(x)) |J_f (x)|\, dx \le \int_{f(\Omega)} \eta(y) N(f,\Omega,y)\, dy,
\end{equation}
where the multiplicity function $N(f,\Omega,y)$ of $f$ is defined as the number of preimages of $y$ under $f$ in $\Omega .$ Moreover \eqref{area formula: 1} is an equality if we assume in addition that $f$ satisfies the Lusin ($N$) condition.
\end{lemma}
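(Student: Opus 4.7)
My plan is to reduce the statement to the classical area formula for Lipschitz mappings via a countable decomposition of $\Omega$ into Lipschitz pieces. The central tool is the fact that any $f \in W^{1,1}_{\loc}(\Omega,\mbr^2)$ is approximately differentiable at $\mcl^2$-a.e. point, with approximate differential agreeing with the weak derivative $Df$; this is standard Sobolev theory (ACL characterization plus Lebesgue differentiation applied to $Df$).

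First I would invoke the Federer/Whitney-type decomposition: there exist a set $E_0 \subset \Omega$ with $\mcl^2(E_0)=0$ and a countable family of Borel sets $E_k \subset \Omega$ ($k \ge 1$) such that $\Omega = E_0 \cup \bigcup_k E_k$ and the restriction $f|_{E_k}$ is Lipschitz for every $k$. One constructs the $E_k$ by stratifying the points of approximate differentiability according to a uniform modulus: at almost every $x$ one can choose a positive radius $r(x)$ and a bound $L(x)$ so that $|f(y)-f(x)| \le L(x)|y-x|$ for $y$ in a controlled density subset of $B(x,r(x))$, and then one partitions according to the pair $(L,r)$ and passes to a Borel refinement on which the Lipschitz bound holds globally.

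Second, on each Lipschitz piece $E_k$, Federer's area formula gives the pointwise identity
\begin{equation*}
\int_{E_k} \eta(f(x))|J_f(x)|\,dx = \int_{f(E_k)} \eta(y)\, \#\bigl(f|_{E_k}^{-1}(y)\bigr)\,dy.
\end{equation*}
Summing over $k$ and applying the monotone convergence theorem, the left-hand side becomes $\int_{\bigcup_k E_k} \eta(f(x))|J_f(x)|\,dx$, which equals $\int_\Omega \eta(f(x))|J_f(x)|\,dx$ since $E_0$ is null. On the right, $\sum_k \#(f|_{E_k}^{-1}(y)) \le N(f,\Omega,y)$ for every $y$, yielding the asserted inequality. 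Under the additional Lusin ($N$) hypothesis, $\mcl^2(f(E_0))=0$, so the preimages lying in $E_0$ contribute a null set of $y$'s to the right-hand count and $\sum_k \#(f|_{E_k}^{-1}(y)) = N(f,\Omega,y)$ for $\mcl^2$-a.e. $y$, producing equality.

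The main obstacle is the Lipschitz decomposition itself for merely $W^{1,1}_{\loc}$ mappings. If one had $p>2$, Morrey embedding would give local Hölder continuity and a direct argument, but at $p=1$ one must rely on the approximate-differentiability construction described above; correctly extracting Borel (rather than merely measurable) Lipschitz pieces requires some care. Given that this lemma is quoted verbatim from \cite[Theorem A.35]{Hencl 2014}, in the paper I would simply cite that reference rather than reproducing the covering argument.
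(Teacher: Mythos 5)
The paper offers no proof of this lemma; it simply cites \cite[Theorem A.35]{Hencl 2014}, exactly as you suggest doing at the end of your sketch. That said, your outline is a correct summary of the standard argument that the reference itself gives: decompose $\Omega$, modulo an $\mcl^2$-null set $E_0$, into countably many Borel pieces $E_k$ on which $f$ restricts to a Lipschitz map (this uses a.e.\ approximate differentiability of $W^{1,1}_{\loc}$ maps), apply Federer's area formula on each $E_k$, and sum via monotone convergence. One detail you should make explicit: the $E_k$ must be taken pairwise disjoint (replace $E_k$ by $E_k \setminus \bigcup_{j<k} E_j$), since otherwise $\sum_k \#\bigl(f|_{E_k}^{-1}(y)\bigr)$ could exceed $N(f,\Omega,y)$ and the inequality would point the wrong way; with disjoint pieces the sum counts precisely the preimages lying in $\Omega \setminus E_0$, giving $\le N(f,\Omega,y)$ for every $y$, and under Lusin $(N)$ the set $f(E_0)$ is null so equality holds for $\mcl^2$-a.e.\ $y$. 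A second small remark: no local integrability of $J_f$ need be assumed, as both sides of \eqref{area formula: 1} are integrals of nonnegative functions and may equal $+\infty$, and the reduction to Lipschitz pieces handles this automatically. Since the paper treats this as a black box, citing the reference is the appropriate choice in context.
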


\begin{lemma}{\rm{(\cite[Lemma A.28]{Hencl 2014})}}\label{diff. a.e.}\label{lemma B}
Suppose that $f: \mathbb{R}^2 \rightarrow \mathbb{R}^2$ is a homeomorphism which belongs to $W^{1,1} _{\loc} (\mathbb{R}^2 , \mathbb{R}^2).$ Then $f$ is differentiable $\mathcal{L}^2$-a.e. on $\mathbb{R}^2$.
\end{lemma}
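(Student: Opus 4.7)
The plan is to combine two classical ingredients: the ACL characterization of Sobolev functions, which gives the existence of partial derivatives $\mathcal{L}^2$-a.e., and the Gehring--Lehto differentiability theorem, which upgrades pointwise partial differentiability of a planar homeomorphism to full differentiability at almost every point.

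More concretely, I would first write $f=(f_1,f_2)$ and use the standard fact that any $W^{1,1}_{\loc}$ function on a planar domain has a representative that is absolutely continuous on $\mathcal{L}^1$-a.e. horizontal and $\mathcal{L}^1$-a.e. vertical line; the classical distributional partial derivatives then coincide almost everywhere with the ordinary pointwise partial derivatives obtained from these one-dimensional restrictions. Applying this to $f_1$ and $f_2$ shows that the full Jacobian matrix of partial derivatives $\partial_j f_i$ exists, is finite, and equals the weak derivative at $\mathcal{L}^2$-almost every $z\in\mathbb{R}^2$. At this stage, however, we only have partial derivatives, not differentiability in the Fr\'echet sense.

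The upgrade from pointwise partial derivatives to true differentiability is the heart of the matter, and is where the assumption that $f$ is a \emph{homeomorphism} of $\mathbb{R}^2$ (rather than a general $W^{1,1}_{\loc}$ map) becomes essential. A planar homeomorphism is automatically monotone in the sense of Lebesgue, since the oscillation of each coordinate on the boundary of a Jordan domain controls its oscillation inside; the Gehring--Lehto theorem then asserts that any planar homeomorphism which is finitely differentiable in each coordinate direction $\mathcal{L}^2$-a.e.\ is in fact totally differentiable $\mathcal{L}^2$-a.e. Invoking this theorem with the partial derivatives produced in the previous step yields the conclusion.

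The main obstacle is certainly the Gehring--Lehto step: in dimension $n\ge 3$ one genuinely needs to assume $f\in W^{1,n}_{\loc}$ in order to recover a.e.~differentiability, and the fact that $W^{1,1}_{\loc}$ suffices for planar homeomorphisms is a subtle two-dimensional phenomenon. Rather than reproving Gehring--Lehto, I would simply cite it, and focus the exposition on explaining why the homeomorphism hypothesis, through monotonicity of the coordinate functions, provides exactly the missing control on oscillations along small circles that converts a.e.\ partial differentiability into a.e.\ total differentiability.
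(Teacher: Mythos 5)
Your proposal is correct and is precisely the standard argument (ACL property of $W^{1,1}_{\loc}$ functions to obtain partial derivatives almost everywhere, followed by the Gehring--Lehto theorem to upgrade to total differentiability for a planar homeomorphism). The paper itself gives no proof of this lemma, citing it directly as Lemma~A.28 of the Hencl--Koskela monograph, where this same two-step argument is carried out; so your reconstruction matches the intended proof of the cited source.
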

Lemma \ref{lemma B} and a simple computation show that 
\begin{equation}\label{MFD :3}
\max_{\theta \in [0,2\pi]}|\partial _{\theta} f (z)| =K_f (z) \min_{\theta \in [0,2\pi]}|\partial _{\theta} f (z)| \qquad \mcl^2 \mbox{-a.e. } z \in \mbr^2
\end{equation}
when $f: \mbr^2 \rightarrow \mbr^2$ is a homeomorphism of finite distortion. Here $\partial_{\theta} f (z)=  \cos( \theta) f_x (z)  +  \sin (\theta) f_y (z)$ for $\theta \in [0, 2\pi] .$

\begin{lemma}{\rm{(\cite[Theorem 1.2]{Hencl 2006 Arch. Ration. Mech. Anal.}, \cite[Theorem 1.6]{Hencl 2014})}} \label{lemma C}
Let $\Omega \subset \mathbb{R}^2$ be a domain and $f: \Omega \rightarrow \mbr^2$ be a homeomorphism of finite distortion. 
Then $f^{-1}: f(\Omega) \rightarrow \Omega$ is also a homeomorphism of finite distortion. Moreover 
\begin{equation}\label{dist. ine for inv}
|Df^{-1} (y)|^2 \le K_{f^{-1}} (y) J_{f^{-1}} (y) \qquad \mcl^2 \mbox{-a.e. } y \in f(\Omega).
\end{equation}
\end{lemma}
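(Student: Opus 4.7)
The plan is to verify three properties of $h:=f^{-1}$: (a) $h\in W^{1,1}_{\loc}(f(\Omega),\mbr^2)$; (b) $J_h\in L^1_{\loc}(f(\Omega))$; and (c) the pointwise inequality \eqref{dist. ine for inv}. Together with the trivial fact that $h$ is a homeomorphism, these recover the definition of finite distortion for $f^{-1}$.

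The crux of the proof is step (a), the $W^{1,1}$-regularity of the inverse. I would proceed via an ACL-on-lines analysis: for $\mathcal{L}^1$-a.e.\ horizontal segment $\ell$ in a coordinate rectangle of $f(\Omega)$, the preimage $\gamma_\ell=f^{-1}(\ell)$ is a Jordan arc in $\Omega$ whose length controls the oscillation of $h$ on $\ell$. A Fubini/monotonicity argument, together with the distortion inequality \eqref{MFD :1} and $J_f\in L^1_{\loc}(\Omega)$, then bounds the total contribution of these arc lengths over the rectangle, yielding absolute continuity of $h$ on $\mathcal{L}^1$-a.e.\ line with locally integrable partial derivatives. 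This step is planar in essential ways (it uses the Jordan curve theorem and the structure of monotone functions on lines), and is established in \cite{Hencl 2006 Arch. Ration. Mech. Anal.}.

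Once (a) holds, Lemma \ref{lemma B} gives $h$ differentiable at $\mathcal{L}^2$-a.e.\ $y\in f(\Omega)$ and $f$ differentiable at $\mathcal{L}^2$-a.e.\ $x\in\Omega$. Combined with the further planar fact that $J_f>0$ holds $\mathcal{L}^2$-a.e.\ in $\Omega$ (a consequence of finite distortion and injectivity), differentiating $h\circ f=\mathrm{id}$ at $y=f(x)$ gives
\[
Dh(y)=(Df(x))^{-1}=\frac{1}{J_f(x)}\,\mathrm{adj}\,Df(x),
\]
whence $J_h(y)=J_f(x)^{-1}$ and $|Dh(y)|=|Df(x)|/J_f(x)$. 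Therefore
\[
\frac{|Dh(y)|^2}{J_h(y)} = \frac{|Df(x)|^2}{J_f(x)} = K_f(x),
\]
which is finite a.e. This identity supplies \eqref{dist. ine for inv} with $K_{h}$ defined in the standard way, while $J_h\in L^1_{\loc}$ follows from applying the change-of-variables Lemma \ref{lemma A} to $\eta\equiv 1$.

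The main obstacle is step (a): the Sobolev regularity of the inverse is a nontrivial planar-topology phenomenon when the only hypothesis is pointwise finite distortion (not, e.g., $K_f\in L^1_{\loc}$). Once (a) is in place, the remaining parts reduce to $2\times 2$ linear algebra together with Lemmas \ref{lemma A} and \ref{lemma B}.
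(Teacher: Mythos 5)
The paper does not prove Lemma \ref{lemma C}: it is stated as a citation to \cite[Theorem 1.2]{Hencl 2006 Arch. Ration. Mech. Anal.} and \cite[Theorem 1.6]{Hencl 2014}, and the paper simply uses the result as given. So your proposal is not being compared against an in-paper argument; it is an attempt to reconstruct the cited proof.

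Your outline is in the right spirit --- the Sobolev regularity of $f^{-1}$ (your step (a)) is indeed the central difficulty, it is planar in an essential way, and attributing it to \cite{Hencl 2006 Arch. Ration. Mech. Anal.} is correct. Step (b), $J_{f^{-1}}\in L^1_{\loc}$, does follow from Lemma \ref{lemma A} applied with $\eta\equiv1$ once (a) is available, since $N(f^{-1},f(\Omega),\cdot)\le1$. However, step (c) is not ``just $2\times2$ linear algebra.'' Two nontrivial facts are invoked without justification. First, the assertion that $J_f>0$ $\mathcal{L}^2$-a.e.\ on $\Omega$ is a theorem, not an elementary consequence of ``finite distortion and injectivity''; for a $W^{1,1}_{\loc}$ homeomorphism of finite distortion this positivity is part of the content of the Hencl--Koskela theory (in higher dimensions it can fail, and even in the plane it requires a genuine argument). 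Second, differentiating $h\circ f=\mathrm{id}$ a.e.\ requires knowing that the set of $y$ for which $f$ is differentiable at $h(y)$ with $J_f(h(y))>0$ has full measure in $f(\Omega)$. This is not automatic from ``full measure sets on each side'': it amounts to a Lusin $(N)$-type statement for $f$ on the complement of the good set, which again is not free for $W^{1,1}_{\loc}$ homeomorphisms. Both of these points are handled inside the cited proof, so your sketch is accurate as a roadmap, but the closing claim that once (a) holds the rest ``reduces to linear algebra and Lemmas \ref{lemma A}--\ref{lemma B}'' understates the remaining technical work. If you intend this as a self-contained argument rather than a gloss on the citation, you need to either cite the positivity of $J_f$ and the relevant Lusin-type condition explicitly, or prove them.
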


\begin{lemma}{\rm{(\cite[Theorem 2.1.11]{Ziemer 1989})}}\label{lemma D}  
Let all $\Omega \subset \mathbb{R}^2, \ \Omega_1 \subset \mathbb{R}^2$ and $\Omega_2 \subset \mathbb{R}^2$ be open, and $T \in Lip (\Omega_1 , \Omega_2).$ Suppose that both $f \in W^{1,p} _{\loc} (\Omega , \Omega_1)$ and $T \circ f \in L^{p} _{\loc} (\Omega ,\Omega_2)$ hold for some $p$ with $1 \le p \le \infty.$ Then $T \circ f \in W^{1,p} _{\loc} (\Omega, \Omega_2 )$ and 
\begin{equation*}
D(T \circ f)(z) = DT(f(z)) Df(z) \qquad \mathcal{L}^2\mbox{-a.e. } z \in \Omega.
\end{equation*}
\end{lemma}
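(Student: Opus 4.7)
The plan is to establish the chain rule by reducing to a smooth $T$ via mollification and then passing to the limit, while verifying the identification of the pointwise formula via a Marcus--Mizel-type argument.

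First I would extend $T$ to a Lipschitz map $\widetilde T : \mbr^2 \to \mbr^2$ with the same Lipschitz constant $L$ using the McShane (coordinatewise) extension, and set $T_\epsilon = \widetilde T * \phi_\epsilon$ with a standard mollifier $\phi_\epsilon$. Then $T_\epsilon \in C^\infty (\mbr^2 , \mbr^2)$, $|DT_\epsilon| \le L$ everywhere, $T_\epsilon \to \widetilde T$ uniformly on compact sets, and (by Rademacher's theorem together with Lebesgue differentiation) $DT_\epsilon \to D\widetilde T$ in $L^1_{\loc}$ and $\mathcal{L}^2$-a.e. along a subsequence. For smooth $T_\epsilon$ the classical chain rule for Sobolev functions is standard and yields $T_\epsilon \circ f \in W^{1,p}_{\loc}(\Omega,\mbr^2)$ with
\begin{equation*}
D(T_\epsilon \circ f)(z) = DT_\epsilon (f(z)) Df(z) \qquad \mcl^2 \mbox{-a.e. } z \in \Omega.
\end{equation*}
The pointwise bound $|D(T_\epsilon \circ f)(z)| \le L |Df(z)|$ gives a uniform $L^p_{\loc}$ control, and uniform convergence of $T_\epsilon$ on compact sets combined with the assumption $T \circ f \in L^p_{\loc}$ yields $T_\epsilon \circ f \to T \circ f$ in $L^p_{\loc}$. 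Weak compactness of the derivatives in $L^p_{\loc}$ then produces a subsequential weak limit that, by the definition of distributional derivative, must coincide with $D(T \circ f)$; this shows $T \circ f \in W^{1,p}_{\loc}$.

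The identification of the weak limit with the pointwise product $DT(f(z)) Df(z)$ is the hard part. Let $N \subset \Omega_1$ be the $\mcl^2$-null set on which $\widetilde T$ fails to be differentiable. Off $f^{-1}(N)$ the chain-rule formula for $T_\epsilon$ passes to the limit pointwise and dominated convergence applies. The obstacle is to show $Df(z) = 0$ for $\mcl^2$-a.e.\ $z \in f^{-1}(N)$, which is the Marcus--Mizel-type property; this can be obtained via the ACL characterization of $W^{1,p}_{\loc}$ by noting that on almost every line on which $f$ is absolutely continuous the restriction $f|_{\ell}$ is differentiable a.e., and at points $z$ where $f(z) \in N$ either the one-dimensional derivative vanishes or one obtains a contradiction with the absolute continuity on lines of $T \circ f$ (which itself is AC by Lipschitz composition with an AC function). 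Combining this with the $L^p_{\loc}$ bound on $|Df|$ gives both the chain-rule formula $\mcl^2$-a.e.\ and the required integrability, completing the proof.
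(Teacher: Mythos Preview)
The paper does not prove this lemma; it is stated with a citation to Ziemer's book and used as a black box. So there is no ``paper's own proof'' to compare against, and I comment only on the correctness of your proposal.

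Your mollification strategy and the passage to the limit for the Sobolev regularity are fine, but the identification step has a genuine gap. The Marcus--Mizel-type claim ``$Df(z)=0$ for $\mathcal{L}^2$-a.e.\ $z\in f^{-1}(N)$'' is true when $f$ is \emph{scalar}-valued, but it is \emph{false} for vector-valued $f:\Omega\to\mathbb{R}^2$ as in the lemma. Take $f(x,y)=(x,0)$ and $T(u,v)=(u,|v|)$; then $T$ fails to be differentiable exactly on $N=\{v=0\}$, $f^{-1}(N)=\mathbb{R}^2$, yet $Df\equiv\bigl[\begin{smallmatrix}1&0\\0&0\end{smallmatrix}\bigr]\ne 0$. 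Your ACL argument does not produce a contradiction here: along any horizontal line $f|_\ell(x)=(x,0)$ has nonzero derivative, $T\circ f|_\ell(x)=(x,0)$ is perfectly AC and differentiable, and $T$ is still not differentiable at $f(x,y)$. Consequently $DT_\epsilon(f(z))\,Df(z)$ need not converge pointwise on a set of full measure, and the identification of the weak limit with $DT(f(z))\,Df(z)$ (and indeed the very meaning of that expression) breaks down. The correct route in the vector-valued setting is the Ambrosio--Dal Maso chain rule, which produces a bounded Borel matrix field $A$ with $|A|\le\mathrm{Lip}(T)$ and $D(T\circ f)=A\,Df$ a.e., with $A(z)=DT(f(z))$ wherever the latter exists; the argument goes through approximate differentiability or a countable Lipschitz decomposition of $f$, not through a vanishing-gradient claim on $f^{-1}(N)$.
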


\begin{definition}
A rectifiable Jordan curve $\Gamma$ in the plane is a chord-arc curve if there is a constant $C>0$ such that
\begin{equation*}
\ell_{\Gamma} (z_1 ,z_2) \le C |z_1 -z_2|
\end{equation*}
for all $z_1 , z_2 \in \Gamma ,$ where $\ell_{\Gamma} (z_1 ,z_2)$ is the length of the shorter arc of $\Gamma$ joining $z_1$ and $z_2 .$
\end{definition}
It is a well-known fact that a chord-arc curve is the image of the unit circle under a bi-Lipschitz mappings of the plane, see \cite{Jerison 1982 Math. Scand.}. Thus chord-arc curves form a special class of quasicircles. The connections between chord-arc curves and quasiconformal theory can be found in \cite{Astala 2016 Invent. Math., Semmes 1988 Trans. Amer. Math. Soc.}.

\subsection{Definition of cardioid-type domains} 
Let $s >1 .$ 
We introduce a class of cardioid-type domains $\Delta_s$ whose boundaries contain internal polynomial cusps of order $s$, see FIGURE \ref{m_s and delta_s}. For technical reasons we do this in the following manner.
Denote
\begin{equation*}
\ell_1 (s) = \{(u,v)\in \mathbb{R}^2 : u \in [-1, 0] ,\ v=(-u)^s\} 
\end{equation*}
and
\begin{equation*}
\ell_2 (s) = \{(u,v) \in \mathbb{R}^2 : u \in [-1, 0] ,\ v=-(-u)^s\}.
\end{equation*} 
Write $\ell_1 (s)$ and $\ell_2 (s)$ in the polar coordinate system as
\begin{align*}
\ell_1(s) = \{  R e^{i \Theta}:\ & R= (-u) (1+(-u)^{2(s-1)})^{\frac{1}{2}}  \\
& \mbox{ and }\Theta =\pi- \arctan ((-u)^{s-1}) \mbox{ for } u \in [-1, 0]\}
\end{align*}
and 
\begin{align*}
\ell_2 (s)= \{  R e^{i \Theta}:\ & R= (-u) (1+(-u)^{2(s-1)})^{\frac{1}{2}}  \\
& \mbox{ and }\Theta =-\pi+ \arctan ((-u)^{s-1}) \mbox{ for } u \in [-1, 0]\}. 
\end{align*}
Take the branch of complex-valued function $z = w^{1/2}$ with $1^{1/2} =1.$ Denote by $\ell^m _1 (s)$ and $\ell^m _2 (s)$ the images of $\ell_1 (s)$ and $\ell_2 (s)$ under the preceding $z=w^{1/2},$ respectively. Then we can write $\ell^m _1 (s)$ and $\ell^m _2 (s)$ in the polar coordinate system as
\begin{align}\label{intro: ell^m _1}
\ell^m _1 (s)=\{re^{i \theta}:\ & r= \sqrt{-u} (1+(-u)^{2(s-1)})^{\frac{1}{4}} \notag \\ & \mbox{ and } \theta = \frac{\pi- \arctan ((-u)^{s-1})}{2} \mbox{ for } u \in [-1, 0]\}
\end{align} 
and 
\begin{align*}
\ell^m _2 (s)=\{re^{i \theta}:\ & r= \sqrt{-u} (1+(-u)^{2(s-1)})^{\frac{1}{4}} \\ & \mbox{ and } \theta = \frac{-\pi+ \arctan ((-u)^{s-1})}{2} \mbox{ for } u \in [-1, 0]\}.
\end{align*} 
Denote by $z_1$ and $z_2$ the end points of $\ell^m _1 (s)\cup \ell^m _2 (s).$
Notice that there is a unique circle sharing both the tangent of $\ell^m _1 (s)$ at $z_1$ and the one of $\ell^m _2 (s)$ at $z_2.$ This circle is divided into two arcs by $z_1$ and $z_2.$ 
Concatenating $\ell^m _1 (s) \cup \ell^m _2 (s)$ with the arc located on the right-hand side of the line through $z_1$ and $z_2$, we then obtain a Jordan curve $\ell^m (s).$ 
Denote by $\ell (s)$ the image of $\ell^m (s)$ under $z^2.$ Let
\begin{equation}\label{definition of M_s and Delta_s}
M_s \mbox{ and } \Delta_s \mbox{ be the interior domains of } \ell^m (s) \mbox{ and }\ell (s),\ \mbox{respectively.}
\end{equation}
Then $\Delta_s$ is the desired cardioid-type domain with degree $s$. Moreover $\ell^m (s),\ \ell(s) ,\ M_s$ and $\Delta_s$ are symmetric with respect to the real axis.

\begin{figure}[htbp]
\centering \includegraphics[bb=136 547 583 724, clip=true, scale=0.8]{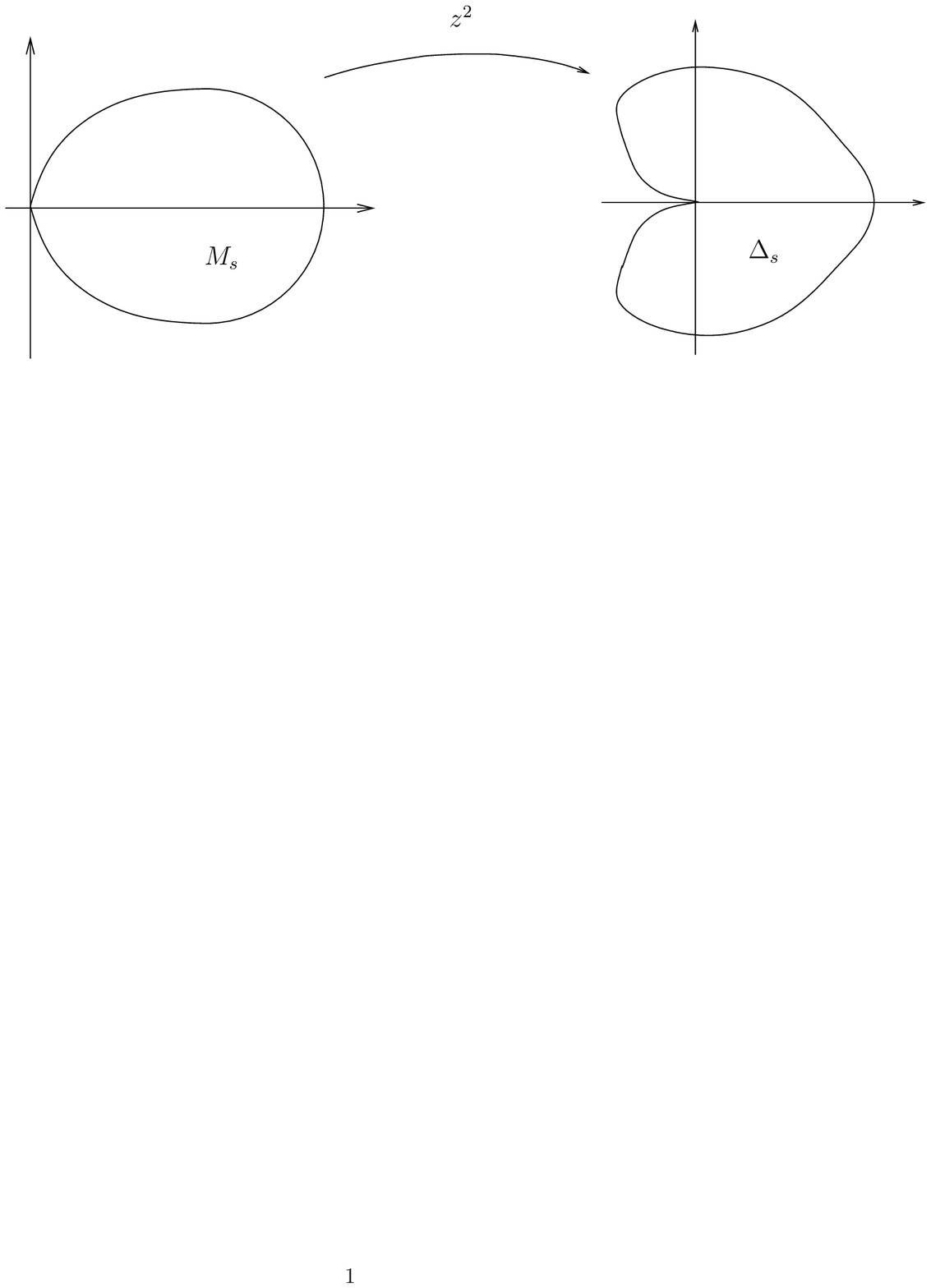}

\caption{$M_s$ and $\Delta_s$}
\label{m_s and delta_s}
\end{figure}

By the Riemann mapping theorem, there is a conformal mapping from $\mathbb{D} \cap \mathbb{R}^2 _+$ onto $M_s \cap \mathbb{R}^2 _+$ such that $\mathbb{D} \cap \mathbb{R}$ is mapped onto $M_s \cap \mathbb{R}.$ 
It follows from the Schwarz reflection principle that there is a conformal mapping 
\begin{equation}\label{g_s}
g_s : \mathbb{D} \rightarrow M_s.  
\end{equation}
such that $g_s (\bar{z}) = \overline{g_s (z)}$ for all $z \in \mbd .$
Moreover by the Osgood-Carath\'eodory theorem $g_s$ has a homeomorphic extension from $\overline{\mathbb{D}}$ onto $\overline{M_s},$ still denoted $g_s .$ 

\begin{lemma}\label{lemma: g_s and g^c _s}
Let $M_s$ and $g_s$ be as in \eqref{definition of M_s and Delta_s} and \eqref{g_s} with $s >1 .$ Then $g_s $ is a bi-Lipschitz mapping on $\overline{\mathbb{D}} .$
\end{lemma}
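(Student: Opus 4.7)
The plan is to prove that $\partial M_s=\ell^m(s)$ is a Dini-smooth Jordan curve, invoke the Kellogg--Warschawski theorem to extend $g_s$ to a $C^1$-diffeomorphism $\overline{\mathbb{D}}\to\overline{M_s}$ with nowhere vanishing derivative, and then derive the bi-Lipschitz estimate from compactness together with the quasi-convexity of $\overline{M_s}$.

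\emph{Step 1: $\partial M_s$ is Dini-smooth.} Away from the origin and the two junction points $z_1,z_2$, the curve $\ell^m(s)$ is real-analytic. At $z_1$ and $z_2$ it is $C^1$ by construction, since the bounding circular arc was chosen to share tangents with $\ell^m_1(s)$ and $\ell^m_2(s)$ there, and the glued pieces are smooth on either side, so Dini-smoothness is automatic locally. The only delicate point is the origin. Parametrize $\ell^m_1(s)$ by $t=\sqrt{-u}\in[0,\varepsilon]$, so that $z(t)^2=-t^2+it^{2s}$. Extracting the square root in the upper half plane and expanding, one obtains
\begin{equation*}
z(t)\;=\;it+\tfrac{1}{2}\,t^{2s-1}+O(t^{4s-3})\qquad\text{as }t\to 0^+.
\end{equation*}
By the reflection symmetry of $\ell^m(s)$ across the real axis, the branch $\ell^m_2(s)$ admits the conjugate expansion $\overline{z(t)}$. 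Concatenating the two branches through the origin with a single parameter $\tau$ (set $\tau=t$ on $\ell^m_1(s)$ and $\tau=-t$ on $\ell^m_2(s)$), one gets a $C^1$ parametrization whose tangent at $\tau=0$ is vertical and whose tangent angle is $O(|\tau|^{2s-2})$. This angle is H\"older continuous with exponent $\min(1,2s-2)>0$, hence Dini-continuous, so the parametrization satisfies the definition of Dini-smoothness recalled in Section~2.2.

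\emph{Step 2: from Dini-smoothness to bi-Lipschitz.} By the Kellogg--Warschawski theorem for Dini-smooth domains (see Pommerenke, \emph{Boundary Behaviour of Conformal Maps}, Theorem~3.5), the map $g_s$ extends to $\overline{\mathbb{D}}$ as a $C^1$-diffeomorphism with $g_s'$ continuous and nowhere vanishing. By compactness, $c\le|g_s'|\le C$ on $\overline{\mathbb{D}}$ for some $0<c\le C<\infty$. The upper bound, combined with the convexity of $\mathbb{D}$, yields $|g_s(z_1)-g_s(z_2)|\le C|z_1-z_2|$ for all $z_1,z_2\in\overline{\mathbb{D}}$ by integration along the straight segment. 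For the reverse estimate, the $C^1$-regularity of $\partial M_s$ makes $\overline{M_s}$ a Lipschitz, hence quasi-convex, domain: there exists $C'>0$ such that any two points $w_1,w_2\in\overline{M_s}$ can be joined by a rectifiable path in $\overline{M_s}$ of length at most $C'|w_1-w_2|$. Applying $g_s^{-1}$ along such a path, with $|(g_s^{-1})'|\le 1/c$, gives $|g_s^{-1}(w_1)-g_s^{-1}(w_2)|\le (C'/c)\,|w_1-w_2|$, which is equivalent to the desired lower bound $|g_s(z_1)-g_s(z_2)|\ge (c/C')|z_1-z_2|$.

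The principal obstacle is Step~1: establishing that the tangent exists, is continuous, and has a Dini modulus of continuity at the origin, via the explicit Taylor expansion above and the symmetric matching of the two branches. The remaining arguments are standard consequences of classical conformal boundary regularity and elementary properties of Lipschitz domains.
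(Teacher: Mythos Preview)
Your proposal is correct and follows essentially the same approach as the paper: establish Dini-smoothness of $\partial M_s$ by a local analysis at the origin (you use a Taylor expansion of the square root, the paper uses polar coordinates, but both arrive at the H\"older exponent $2(s-1)$ for the tangent), then invoke Pommerenke's theorem to get $g_s'$ continuous and nonvanishing on $\overline{\mathbb{D}}$, and finally deduce the bi-Lipschitz bound. The only noteworthy difference is that for the lower Lipschitz bound the paper asserts that $M_s$ is convex and applies the mean value theorem to $g_s^{-1}$ along straight segments, whereas you argue via quasi-convexity of the $C^1$ domain $\overline{M_s}$; your route is slightly more cautious but otherwise equivalent.
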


\begin{proof}
If $\partial M_s$ were a Dini-smooth Jordan curve, 
from \cite[Theorem 3.3.5]{Pommerenke 1992} it would follow that $g' _s$ is continuous on $\overline{\mathbb{D}}$ and $g' _s (z) \neq 0$ for all $z \in \overline{\mathbb{D}}.$ 
Since $M_s$ is convex, the mean value theorem would then yield that  
$g_s$ is a bi-Lipschitz map from $\overline{\mathbb{D}}$ onto $\overline{M_s}.$

In order to prove that $\partial M_s$ is a Dini-smooth Jordan curve,
we first analyze $\partial M_s$ in a neighborhood of the origin.  
For any point in $\ell^m _1$ with Euclidean coordinate $(x,y),$ 
we have 
\begin{equation}\label{intro: 5}
x= r \cos \theta \mbox{ and } y= r \sin \theta.
\end{equation}
where both $r$ and $\theta$ share the expression in \eqref{intro: ell^m _1}.
We then obtain that
\begin{equation}\label{intro: 6}
r \approx \sqrt{-u},\ \theta \approx \frac{\pi}{2},\ 
\frac{\partial r}{\partial u} \approx \frac{-1}{\sqrt{-u}} \mbox{ and } \frac{\partial \theta}{\partial u} \approx (-u)^{s-2}
\end{equation}
whenever $|u| \ll 1.$
Therefore from \eqref{intro: 5} and \eqref{intro: 6}, it follows that 
\begin{equation*}
x \approx (-u)^{s- \frac{1}{2}},\ 
y \approx (-u)^{\frac{1}{2}},\ 
\frac{\partial x}{\partial u} \approx - (-u)^{s- \frac{3}{2}} \mbox{ and }
\frac{\partial y}{\partial u} \approx -(-u)^{-\frac{1}{2}}.
\end{equation*}
Together with symmetry of $\partial M_s ,$
we conclude that $\frac{\partial x}{\partial y} \approx |y|^{2(s-1)}$ whenever $|y| \ll 1.$
Next, notice that the part of $\partial M_s$ away from the origin is piecewise smooth.
By parametrizing $\partial M_s$ as $\alpha(y)=(x(y),y),$ we then obtain that the modulus of continuity of $\alpha'$ satisfies 
\begin{equation*}
\omega(\delta,\alpha',\partial M_s) \le \max\{\delta^{2(s-1)}, \delta\} \qquad \forall \delta \ll 1.
\end{equation*}
Consequently $\alpha'$ is Dini-continuous. 
Therefore $\partial M_s$ is a Dini-smooth Jordan curve.
\end{proof}

\begin{remark}\label{deduce to M_s onto Delta_s}
Since $g_s : \mathbb{S}^1 \rightarrow \partial M_s$ is a bi-Lipschitz map by Lemma \ref{lemma: g_s and g^c _s}, via \cite[Theorem A]{Tukia 1980 Ann. Acad. Sci. Fenn. Ser. A I Math.} there is a bi-Lipschitz mapping $g^c _s : \mathbb{D}^c \rightarrow M_s ^c$ such that $g^c _s |_{\mathbb{S}^1} =g_s .$
Let
\begin{equation}\label{rmk inverse: 0}
G_s (z)=
\begin{cases}
g_s (z)  &\forall z \in \overline{\mathbb{D}}, \\
g^c _s (z)  &\forall z \in \mathbb{D}^c .
\end{cases}
\end{equation}
Then $G_s$ is an orientation-preserving bi-Lipschitz mapping. 
\end{remark}

\begin{lemma}\label{composition}
Let $h_1 : \mathbb{R}^2 \rightarrow \mathbb{R}^2$ be a homeomorphism of finite distortion, and $h_2 : \mathbb{R}^2 \rightarrow \mathbb{R}^2$ be an $(l,L)$-bi-Lipschitz, orientation-preserving mapping. Then $h_1 \circ h_2$ is a homeomorphism of finite distortion. 
\end{lemma}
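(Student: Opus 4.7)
The goal is to verify the three defining conditions for finite distortion of $f := h_1 \circ h_2$: Sobolev regularity $f \in W^{1,1}_{\loc}$, integrability of $J_f$, and the pointwise inequality $|Df|^2 \le K_f J_f$ with $K_f < \infty$ a.e. Since $h_1 \circ h_2$ is manifestly a homeomorphism, the work is entirely about the analytic conditions.

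The plan is to exploit the fact that bi-Lipschitz maps preserve Sobolev spaces under pre-composition. First I would invoke Rademacher's theorem to know that $h_2$ is differentiable $\mathcal{L}^2$-a.e., with $|Dh_2(z)| \le L$ everywhere it exists, and, via the orientation-preserving bi-Lipschitz inverse, $J_{h_2}(z) \ge l^2$ at a.e.\ point of differentiability. In particular $h_2$ satisfies both the Lusin $(N)$ and $(N^{-1})$ conditions because it and its inverse are Lipschitz. Using the standard characterization of $W^{1,1}_{\loc}$ via absolute continuity on lines together with the chain rule along a.e.\ line, one concludes $f = h_1\circ h_2 \in W^{1,1}_{\loc}(\mbr^2,\mbr^2)$ and
\begin{equation*}
Df(z) = Dh_1(h_2(z))\, Dh_2(z) \qquad \mcl^2\text{-a.e. } z \in \mbr^2.
\end{equation*}

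Next, I would verify $J_f \in L^1_{\loc}$. From the product rule for determinants, $J_f(z) = J_{h_1}(h_2(z))\, J_{h_2}(z) \le L^2\, J_{h_1}(h_2(z))$. Applying Lemma \ref{lemma A} to $h_2$, which satisfies the Lusin $(N)$ condition and is one-to-one, gives, for every ball $B$,
\begin{equation*}
\int_B J_{h_1}(h_2(z))\,dz \le \frac{1}{l^2}\int_B J_{h_1}(h_2(z))\, J_{h_2}(z)\,dz = \frac{1}{l^2}\int_{h_2(B)} J_{h_1}(y)\,dy < \infty,
\end{equation*}
since $J_{h_1} \in L^1_{\loc}$ and $h_2(B)$ is bounded. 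Hence $J_f \in L^1_{\loc}$.

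Finally, I would establish the distortion inequality. On the set where $J_{h_1}(h_2(z)) > 0$ and $J_{h_2}(z) > 0$ (which is cofinite in $\mbr^2$ up to the bad set where either $h_1$ fails differentiability, the distortion inequality for $h_1$ fails, or $h_2$ is not differentiable), the submultiplicativity $|Df(z)| \le |Dh_1(h_2(z))|\, |Dh_2(z)|$ gives
\begin{equation*}
|Df(z)|^2 \le L^2 |Dh_1(h_2(z))|^2 \le L^2 K_{h_1}(h_2(z))\, J_{h_1}(h_2(z)) \le \frac{L^2}{l^2} K_{h_1}(h_2(z))\, J_f(z),
\end{equation*}
so $K_f(z) \le (L/l)^2 K_{h_1}(h_2(z)) < \infty$ a.e. On the complementary set where $J_f(z) = 0$: since $J_{h_2} \ge l^2 > 0$ a.e., this forces $J_{h_1}(h_2(z)) = 0$ a.e.; by finite distortion of $h_1$ this gives $|Dh_1(h_2(z))| = 0$ at a.e.\ such point (the exceptional set in the target of $h_1$ has measure zero and its preimage under $h_2$ has measure zero by Lusin $(N^{-1})$), whence $|Df(z)| = 0$. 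The main technical point to be careful about is precisely this null-set handling: one must transfer the a.e.\ properties of $h_1$ from the target of $h_2$ back to the source, which is exactly where the bi-Lipschitz hypothesis on $h_2$ (hence both Lusin conditions) is essential.
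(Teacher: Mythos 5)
Your proposal follows essentially the same route as the paper: deduce the Lusin $(N)$, $(N^{-1})$ conditions and a.e.\ differentiability from $h_2$ being bi-Lipschitz (hence quasiconformal), apply the chain rule a.e., use the change-of-variables formula (Lemma~\ref{lemma A}) for $J_{h_1\circ h_2}\in L^1_{\loc}$, and combine the two component distortion inequalities with submultiplicativity of the operator norm. The one step you treat as a black box is the claim that $h_1\circ h_2\in W^{1,1}_{\loc}$: your appeal to the ACL characterization together with the chain rule establishes absolute continuity on lines but does not by itself give the required local integrability of $|D(h_1\circ h_2)|$. The paper closes this by an explicit change-of-variables estimate, using the two-sided bound $J_{h_2}\approx 1$ for bi-Lipschitz orientation-preserving $h_2$ to write
\begin{equation*}
\int_M |D(h_1\circ h_2)| \lesssim \int_M |Dh_1(h_2(z))|\, J_{h_2}(z)\,dz = \int_{h_2(M)} |Dh_1(w)|\,dw < \infty,
\end{equation*}
which is exactly the calculation your plan has the tools to carry out but does not write down. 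Apart from this small omission, the argument is correct and in line with the paper's; your pointwise bound $K_{h_1\circ h_2}\le (L/l)^2\,K_{h_1}\circ h_2$ is even a slight variant of the paper's $K_{h_1\circ h_2}\le (K_{h_1}\circ h_2)\,K_{h_2}$, and the null-set bookkeeping you emphasize via Lusin $(N^{-1})$ matches the paper's reasoning.
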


\begin{proof}
Since $h_2$ is an orientation-preserving bi-Lipschitz mapping, we have that $h_2$ is quasiconformal. 
From \cite[Corollary 3.7.6]{Astala 2009} it then follows that 
\begin{equation}\label{composition: 2}
h_2 \mbox{ satisfies Lusin }(N) \mbox{ and }(N^{-1})\mbox{ condition,}
\end{equation}
\begin{equation}\label{composition: 2-1}
J_{h_2} >0 \quad \mathcal{L}^2 \mbox{-a.e. on }\mathbb{R}^2.  
\end{equation}
By Lemma \ref{lemma B} we have 
\begin{equation}\label{composition: 1}
\mbox{both } h_1 \mbox{ and } h_2 \mbox{ are differentiable }\mathcal{L}^2 \mbox{-a.e. on }\mathbb{R}^2 .
\end{equation}
From \eqref{composition: 1} and \eqref{composition: 2} it therefore follows that $h_1  \circ h_2$ is differentiable $\mathcal{L}^2$-a.e. on $\mathbb{R}^2,$ and
\begin{equation}\label{composition: 3}
D(h_1  \circ h_2) (z) =Dh_1 (h_2 (z)) D h_2 (z) \qquad \mathcal{L}^2 \mbox{-a.e. } z \in \mathbb{R}^2 .
\end{equation}
By \eqref{composition: 3}, Lemma \ref{lemma A} and \eqref{composition: 2}, we then have that
\begin{equation}\label{composition: 4}
\int_{M} |J_{h_1  \circ h_2} (z) |\, dz = \int_{M} |J_{h_1}  (h_2 (z))| |J_{h_2} (z)|\, dz  = \int_{h_2 (M)} |J_{h_1}  (w)|\, dw <\infty
\end{equation}
for any compact set $M \subset \mathbb{R}^2,$ where the last inequality is from $J_{h_1}  \in L^{1} _{\loc} .$
Moreover, from \eqref{composition: 3} and the distortion inequalities for $h_1$ and $h_2 $ it follows that 
\begin{align}\label{composition: 5}
|D(h_1  \circ h_2) (z)|^2  \le & |Dh_1 (h_2 (z))|^2 |Dh_2 (z)|^2 \le K_{h_1}  (h_2 (z)) K_{h_2} (z) J_{h_1} (h_2 (z)) J_{h_2} (z) \notag\\
= & K_{h_1}  (h_2 (z)) K_{h_2} (z) J_{h_1  \circ h_2} (z)
\end{align}
for $\mathcal{L}^2$-a.e. $z \in \mathbb{R}^2 .$

To prove that $h_1  \circ h_2$ is a homeomorphism of finite distortion, via \eqref{composition: 4} and \eqref{composition: 5} it is sufficient to prove that $h_1  \circ h_2 \in W^{1,1} _{\loc} .$
Since $h_2$ is an $(l,L)$-bi-Lipschitz orientation-preserving mapping, by \eqref{composition: 1} and \eqref{MFD :3} we then have that
\begin{equation}\label{composition: 6}
l \le |Dh_2 (z)| \le L \mbox{ and } 1 \le K_{h_2} (z) \le \frac{L}{l} \qquad \mathcal{L}^2 \mbox{-a.e. }z \in \mathbb{R}^2 .
\end{equation}
From\eqref{composition: 2-1}, \eqref{composition: 6} and \eqref{MFD :1} it then follows that
\begin{equation}\label{composition: 7}
\frac{l^3}{L} \le J_{h_2} (z) \le L^2\qquad \mathcal{L}^2 \mbox{-a.e. } z \in \mathbb{R}^2 .
\end{equation}
By \eqref{composition: 3}, \eqref{composition: 6}, \eqref{composition: 7} and Lemma \ref{lemma A},
we therefore have 
\begin{align*}
\int_{M} |D(h_1  \circ h_2) (z)|\, dz \le & \int_{M} |Dh_1 (h_2 (z))| \frac{|Dh_2 (z)| }{J_{h_2} (z)} J_{h_2} (z)\,  dz \\
\approx &  \int_{M} |Dh_1 (h_2 (z))| J_{h_2} (z) \,  dz \\
= & \int_{h_2(M)} |Dh_1 (w)|\, dw  <\infty
\end{align*}
for any compact set $M \subset \mathbb{R}^2 ,$ where the last inequality is from $h_1  \in W^{1,1} _{\loc} .$
\end{proof}

\section{Bounds for integrability degrees}
For a given $s>1 ,$ let $M_s$ as in \eqref{definition of M_s and Delta_s}. Define
\begin{align}\label{mathcalE_s}
\mathcal{E}_s = \{f : \ & f: \mathbb{R}^2 \rightarrow \mathbb{R}^2 \mbox{ is a homeomorphism of finite distortion}   \notag \\
& \mbox{and } f(z)=z^2 \mbox{ for all } z \in \overline{M_s} \}.
\end{align}

\begin{lemma}\label{negative part of f^-1}
Let $\mathcal{E}_s$ be as in \eqref{mathcalE_s} with $s > 1 ,$ and 
$f \in \mathcal{E}_s .$ Suppose that $f^{-1} \in W^{1,p}_{\loc} (\mathbb{R}^2, \mbr^2)$ for some $p \ge 1.$ Then necessarily $p < 2(s+1)/(2s-1).$
\end{lemma}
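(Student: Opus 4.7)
The plan is to extract a contradiction from the Sobolev integrability of $f^{-1}$ by exploiting the thin external cusp of $\Delta_s$ adjacent to the origin. Since $f(z) = z^2$ on $\overline{M_s}$, the inverse $f^{-1}$ coincides on $\overline{\Delta_s}$ with the branch of $\sqrt{\cdot}$ whose image lies in $\overline{M_s}$. Near the origin the relevant part of $\partial \Delta_s$ consists of the arcs $v = \pm(-u)^s$ defining $\ell_1(s), \ell_2(s)$, so for sufficiently small $\delta > 0$ the thin region
\[
\Omega_\delta := \{(x,y)\in \mbr^2 : -\delta < x < 0,\ |y| < (-x)^s\}
\]
is contained in $\Delta_s^c$ with $\partial \Delta_s$ forming its upper and lower longitudinal edges.

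I would first compute, for small $t > 0$, the asymptotics
\[
\sqrt{-t \pm i t^s} = \Bigl(\tfrac{1}{2} t^{s-1/2},\ \pm t^{1/2}\Bigr)(1 + o(1)),
\]
from which it follows that the $f^{-1}$-images of the two endpoints of the vertical segment $L_{x_0} := \{x_0\} \times [-(-x_0)^s, (-x_0)^s]$ (which lie on $\partial \Delta_s$) are points of $\partial M_s$ separated by Euclidean distance of order $(-x_0)^{1/2}$. Assuming $f^{-1} \in W^{1,p}_{\loc}(\mbr^2, \mbr^2)$, the ACL property of Sobolev maps gives, for $\mcl^1$-a.e. $x_0 \in (-\delta, 0)$, that the restriction of $f^{-1}$ to $L_{x_0}$ is absolutely continuous, and hence
\[
\int_{L_{x_0}} |\partial_y f^{-1}(x_0, y)|\, dy \ \ge\ |f^{-1}(x_0, (-x_0)^s) - f^{-1}(x_0, -(-x_0)^s)| \ \gtrsim\ (-x_0)^{1/2}.
\]

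Next I would apply Hölder's inequality on $L_{x_0}$, which has length $2(-x_0)^s$, to obtain
\[
\int_{L_{x_0}} |\partial_y f^{-1}|^p\, dy \ \gtrsim\ \frac{(-x_0)^{p/2}}{(-x_0)^{s(p-1)}} \ =\ (-x_0)^{p/2 - s(p-1)},
\]
and then integrate in $x_0$, using Fubini and the pointwise bound $|\partial_y f^{-1}| \le |Df^{-1}|$, to deduce
\[
\int_{\Omega_\delta} |Df^{-1}|^p\, d\mcl^2 \ \gtrsim\ \int_0^\delta t^{p/2 - s(p-1)}\, dt.
\]
The right-hand side diverges exactly when $p/2 - s(p-1) \le -1$, i.e.\ when $p \ge 2(s+1)/(2s-1)$, which contradicts $f^{-1} \in W^{1,p}_{\loc}$ and forces $p < 2(s+1)/(2s-1)$. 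The main technical point is the asymptotic expansion of $\sqrt{-t \pm it^s}$ identifying the two endpoint images; once that is in place, only the standard Sobolev-ACL fact and a one-dimensional Hölder estimate are needed.
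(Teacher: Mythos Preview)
Your argument is correct and follows essentially the same route as the paper: both take the vertical segments $I_x=\{x\}\times[-(-x)^s,(-x)^s]$ in the external cusp, use the ACL property to bound the oscillation of $f^{-1}$ from above by $\int_{I_x}|Df^{-1}|$, observe via $f|_{\partial M_s}=z^2$ that the endpoint images are $\approx(-x)^{1/2}$ apart, apply H\"older/Jensen on the short segment, and integrate in $x$ to force $p/2-s(p-1)>-1$. Your explicit asymptotic for $\sqrt{-t\pm it^s}$ is just a more detailed version of the paper's one-line estimate $(-x)^{1/2}\lesssim\osc_{I_x}f^{-1}$.
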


\begin{proof}
Given $x \in (-1 ,0),$ denote by $I_x$ the line segment connecting the points $(x,|x|^{s})$ and $(x,-|x|^{s}).$ 
Since $f^{-1} \in W^{1,p}_{\loc} $ for some $p \ge 1,$ by the ACL-property of Sobolev functions it follows that
\begin{equation}\label{negative part of f^-1: 1}
\mbox{osc}_{I_x} f^{-1} \le \int_{I_x} |D f^{-1}(x,y)|\, dy
\end{equation}
holds for $\mathcal{L}^1$-a.e. $x \in (-1 ,0).$ 
Applying Jensen's inequality to \eqref{negative part of f^-1: 1}, we have 
\begin{equation}\label{negative part of f^-1: 2}
\frac{(\mbox{osc}_{I_x} f^{-1})^p}{(-x)^{s(p-1)}} \le \int_{I_x} |D f^{-1} (x,y)|^p \, dy.
\end{equation}
Since $f(z) =z^2$ for all $z \in \partial M_s ,$  
we have 
\begin{equation}\label{negative part of f^-1: 2-1}
(-x)^{1/2} \lesssim \mbox{osc}_{I_x} f^{-1} \qquad  \forall x \in (-1 ,0).
\end{equation}
Combining \eqref{negative part of f^-1: 2} with \eqref{negative part of f^-1: 2-1}, we hence obtain 
\begin{equation}\label{negative part of f^-1: 3}
(-x)^{\frac{p}{2} -s(p-1)} \lesssim \int_{I_x} |D f^{-1} (x,y)|^p \, dy \qquad  \mathcal{L}^1 \mbox{-a.e. } x \in (-1 ,0).
\end{equation}
Integrating \eqref{negative part of f^-1: 3} with respect to $x \in (-1 ,0)$ therefore implies 
\begin{equation}\label{negative part of f^-1: 4}
\int_{-1} ^{0}(-x)^{\frac{p}{2} -s(p-1)}\, dx 
\lesssim \int_{B(0,\sqrt{2})} |D f^{-1} (x,y)|^p \, dx \, dy.
\end{equation}
Since $f^{-1} \in W^{1,p}_{\loc} ,$
from \eqref{negative part of f^-1: 4} we necessarily obtain $\frac{p}{2}-s(p-1) >-1,$ which is equivalent to $p < 2(s+1)/(2s-1).$
\end{proof}

Our next proof borrows some ideas from \cite[Theorem 1]{Koskela 2010 Acta Math. Sin.}.
\begin{lemma}\label{necessary K_f^-1}
Let $\mathcal{E}_s$ be as in \eqref{mathcalE_s} with $s > 1 .$ 
Let $f \in \mathcal{E}_s$ and suppose that $K_{f^{-1}} \in L^q _{loc} (\mathbb{R}^2)$ for a given $q \ge 1.$ 
Then $q < (s+1)/(s-1).$
\end{lemma}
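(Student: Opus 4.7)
The plan is to mirror the strategy of Lemma \ref{negative part of f^-1}, but since we want to bound the integrability of $K_{f^{-1}}$ we pair the ACL oscillation bound with the distortion inequality \eqref{dist. ine for inv} and the area control on $J_{f^{-1}}$. The geometric input is the same as before: for $u \in (-1, 0)$ the vertical segment
\[
I_u \;=\; \{u\} \times [-(-u)^s,\, (-u)^s]
\]
lies in $\mbr^2 \setminus \overline{\Delta_s}$, has length $2(-u)^s$, and its endpoints on $\partial \Delta_s$ are sent by $f^{-1}$ (which coincides with the branch of the square root determined on $\partial \Delta_s$) to points of $\ell^m _1$ and $\ell^m _2$ lying in opposite quadrants at Euclidean distance $\gtrsim \sqrt{-u}$ from each other. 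Lemma \ref{lemma C} guarantees that $f^{-1}$ is a homeomorphism of finite distortion, hence in $W^{1,1} _{\loc}$, so the ACL property yields
\[
\int_{I_u} |Df^{-1}(u, v)|\, dv \;\ge\; \mbox{osc}_{I_u} f^{-1} \;\gtrsim\; \sqrt{-u}
\]
for $\mcl^1$-a.e.\ $u \in (-1, 0)$.

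Combining $|Df^{-1}| \le K_{f^{-1}}^{1/2} J_{f^{-1}}^{1/2}$ from \eqref{dist. ine for inv} with Cauchy--Schwarz on the slice $I_u$ converts this oscillation bound into $-u \lesssim \bigl(\int_{I_u} K_{f^{-1}}\, dv\bigr)\bigl(\int_{I_u} J_{f^{-1}}\, dv\bigr)$. A Hölder step on the $K_{f^{-1}}$-factor with exponents $(q, q/(q-1))$, using $|I_u| = 2(-u)^s$, then upgrades the estimate to
\[
(-u)^{s - q(s-1)} \;\lesssim\; \Bigl(\int_{I_u} K_{f^{-1}}^q\, dv\Bigr) \Bigl(\int_{I_u} J_{f^{-1}}\, dv\Bigr)^{q}.
\]
Next I would integrate over $u \in (-1, 0)$. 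The assumption $K_{f^{-1}} \in L^q _{\loc}$ controls the $K$-factor globally on the cuspidal exterior $W = \bigcup_{u \in (-1,0)} I_u$, and the area bound $\int_W J_{f^{-1}}\, dw \le \mcl^2(f^{-1}(W)) < \infty$, obtained from Lemma \ref{lemma A} applied to $f^{-1}$, controls the $J$-factor in $L^1$. If the right-hand side after $u$-integration can be dominated by a finite constant, one concludes $\int_{-1}^0 (-u)^{s - q(s-1)}\, du < \infty$, forcing $s - q(s-1) > -1$, that is, $q < (s+1)/(s-1)$.

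The technical heart of the argument is exactly this last passage from the slice-wise inequality to the integrated one with the sharp exponent. A direct Hölder in $u$ applied to the product $(\int_{I_u} K_{f^{-1}}^q)(\int_{I_u} J_{f^{-1}})^q$ loses too much and yields only the weaker bound $q < (s+1)/(s-2)$, with no constraint at all when $s \le 2$. Following the idea in \cite[Theorem 1]{Koskela 2010 Acta Math. Sin.}, the sharp threshold $(s+1)/(s-1)$ is recovered by pairing the slice-Hölder with the $u$-integration in a single weighted step, so that the Jacobian contributes only at its natural linear power, thereby matching the $L^1$ strength of the area bound exactly. Correctly choosing this weight, and verifying that the geometric factor $(-u)^{s/2}$ coming from $|I_u|$ and the radial scale $\sqrt{-u}$ of the oscillation combine to give the exponent $s - q(s-1)$ rather than $(s - q(s-1))/(q+1)$, is what I expect to be the main obstacle.
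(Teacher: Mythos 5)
Your slice-wise argument up to the displayed inequality
\[
(-u)^{s-q(s-1)} \;\lesssim\; \Bigl(\int_{I_u} K_{f^{-1}}^q\,dv\Bigr)\Bigl(\int_{I_u} J_{f^{-1}}\,dv\Bigr)^{q}
\]
is correct, and you have correctly located the obstruction: this cannot be integrated in $u$ because the Jacobian sits at power $q$, not $1$. But I believe the ``single weighted step'' you hope for does not exist, and the gap is genuine. Here is why. The only way to make $J_{f^{-1}}$ enter at its natural linear power in a slice argument is to take $p = 2q/(q+1)$ and use Young's inequality $K^{p/2}J^{p/2} \le \tfrac{2-p}{2}K^{q} + \tfrac{p}{2}J$ (or, equivalently, a weighted two-dimensional H\"older as you sketch). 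Feeding this into the oscillation bound $(-u)^{\,p/2 - s(p-1)} \lesssim \int_{I_u}|Df^{-1}|^p\,dv$ of Lemma \ref{negative part of f^-1} and integrating gives $\int_{-1}^0 (-u)^{[s-q(s-1)]/(q+1)}\,du < \infty$, and the divisor $q+1$ is unavoidable: it is exactly the exchange rate imposed by pairing $|Df^{-1}|^p$ with $K^q$ and $J^1$ on a one-dimensional slice. The resulting threshold is $q < (s+1)/(s-2)$ (vacuous for $s\le 2$), precisely the weaker bound you compute, and no choice of weight recovers the missing factor of $q+1$ because both the oscillation $\sqrt{-u}$ and the slice length $(-u)^s$ are fixed geometric data. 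So your ``expected main obstacle'' is in fact a wall: the slice approach is structurally tied to $(s+1)/(s-2)$.

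The paper's proof (and \cite[Theorem 1]{Koskela 2010 Acta Math. Sin.}, which it credits) escapes this by abandoning slices in favor of a conformal-modulus test function. It builds a Lipschitz $v$ on the annulus $A_t$ from the admissible metric $\rho(w) = L^2_t/(L_t|w|)$, so that $\int_{\tilde Q_t}|Dv|^2 \approx \log 2$ \emph{uniformly in $t$}. Writing $u = v\circ f^{-1}$, the distortion inequality and H\"older give
\[
\int_{Q_t}|Du|^p \;\le\; \Bigl(\int_{\tilde Q_t}|Dv|^2\Bigr)^{p/2}\Bigl(\int_{Q_t}K_{f^{-1}}^{q}\Bigr)^{(2-p)/2},\qquad q=\tfrac{p}{2-p},
\]
so the Jacobian is absorbed \emph{entirely}, via the area formula, into the scale-invariant constant $\log 2$ rather than into a power of $t$. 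This removes the $(q+1)$-divisor and yields the sharp exponent $2(1+q+s(1-q))$, hence $q<(s+1)/(s-1)$ on summing over dyadic $t$. You would need to import this test-function idea; the pure oscillation-on-slices strategy, however weighted, cannot reach the stated threshold.
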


\begin{proof}
For a given $t \ll 1,$ we denote 
\begin{equation*}
E_t = \{(x,y) \in \mathbb{R}^2 : x \in (-t^2 , -(\frac{t}{2})^2 ) \mbox{ and } y=-|x|^s\} 
\end{equation*}
and 
\begin{equation*}
F_t = \{(x,y)\in \mathbb{R}^2 : x \in (-t^2 , -(\frac{t}{2})^2 ) \mbox{ and } y=|x|^s\}. 
\end{equation*}
Let $\tilde{E}_t = f^{-1} (E_t) \mbox{ and } \tilde{F}_t = f^{-1} (F_t) .$
Set
\begin{equation*}
 L^1 _t = \min \{|z| : z \in \tilde{F}_t\},\ L^2 _t = \max\{|z|: z \in \tilde{F}_t\},
\end{equation*}
\begin{equation*}
L_t = \dist(\tilde{E}_t,\tilde{F}_t),\  L_0 = \max\{|f^{-1} (z)|: \RE z=-1, \IM z \in [-1,1]  \}.
\end{equation*}
Since $f(z)=z^2$ for all $ z \in \partial M_s ,$ we have  
$L^1 _t \approx t/2,\ L^2 _t \approx t$ and $L_t
 \approx t$ whenever $t \ll 1 .$ 
Given $w \in A_t :=\{w \in \mathbb{R}^2 : L^1 _t \le |w| \le L^2 _t \} ,$ set $\rho (w) = L^2 _t  /(L_t |w|).$ Define
\begin{equation}\label{K_f^-1: v}
v(z) = 
\begin{cases}
1 &\mbox{ for all } z \in B(0,L_0) \setminus A_t, \\
\inf_{\gamma_z} \int_{\gamma_z} \rho \, ds & \mbox{ for all } z \in A_t, 
\end{cases}
\end{equation}
where the infimum is taken over all curves $\gamma_z \subset A_t$ joining $z$ and $\tilde{E}_t.$
From \eqref{K_f^-1: v} it follows that for any $z_1,\ z_2 \in A_t$ and any curve $\gamma_{z_1 z_2} \subset A_t$ connecting $z_1$ and $z_2 $ we have 
\begin{equation}\label{K_f^-1: 1}
|v(z_1) -v(z_2)|\le \int_{\gamma_{z_1 z_2}} \rho \, ds .
 \end{equation}
Therefore $v$ is a Lipschitz function on $A_t .$
By Rademacher's theorem, $v$ is differentiable $\mathcal{L}^2$-a.e. on $A_t.$ 
Hence \eqref{K_f^-1: 1} together with the continuity of $\rho$ gives
\begin{equation}\label{K_f^-1: 2}
|Dv(z)| \le \rho(z) \qquad \mathcal{L}^2 \mbox{-a.e. } z \in A_t . 
\end{equation}
Integrating \eqref{K_f^-1: 2} over $\tilde{Q} _t = A_t \setminus M_s $ then yields
\begin{equation}\label{K_f^-1: 3}
\int_{\tilde{Q} _t} |Dv|^2 \le \int_{\tilde{Q} _t} \rho^2 \approx \int_{L^1 _t} ^{L^2 _t} \frac{1}{r}\, dr \approx \log 2 .
\end{equation}

By Lemma \ref{lemma C} we have $f^{-1} \in W^{1,1} _{\loc} .$ 
Let $u =v \circ f^{-1} .$ 
From Lemma \ref{lemma D} we then have $u \in W^{1,1} _{\loc} (f (B(0,L_0)))$ and 
\begin{equation}\label{K_f^-1: 4-1}
|Du(z)| \le |Dv(f^{-1} (z))| |Df^{-1}(z)| \qquad \mathcal{L}^2 \mbox{-a.e. in }  f (A_t) .
\end{equation} 
By \eqref{K_f^-1: v}, $v(z) =0$ for all $z \in \tilde{E}_t .$
Hence $u (z) =0$ for all $ z \in E_t .$
Whenever $z \in \tilde{F}_t, $ we have $\mcl^1 (\gamma_z ) \ge L_t $ for any curve $\gamma_z \subset A_t$ joining $z$ and $\tilde{E}_t .$ 
Therefore $v(z) \ge 1$ for all $  z \in \tilde{F}_t .$
Hence $u(z)\ge 1$ for all $ z \in F_t .$
By the ACL-property of Sobolev functions and H\"older's inequality, we therefore have that
\begin{equation}\label{K_f^-1: 4}
1 \le \int_{-x^s} ^{x^s} |Du(x,y)| \, dy \le \left(\int_{-x^s} ^{x^s} |Du(x,y)|^p \, dy \right)^{\frac{1}{p}} (2 x^s)^{\frac{p-1}{p}}
\end{equation}
for any $p > 1$ and $\mcl^1$-a.e. $x \in [-t^2 ,-(t/2)^2].$
Define
\begin{equation*}
R_t = \{(x,y)\in \mathbb{R}^2 : x \in (-t^2, -(t/2)^2) ,\ y \in (-|x|^s, |x|^s)\}.
\end{equation*}
Fubini's theorem and \eqref{K_f^-1: 4} then give
\begin{align}\label{K_f^-1: 5}
\int_{R_t} |Du(x,y)|^p \, dx \, dy = & \int_{-t^2} ^{-(t/2)^2}  \int_{-x^s} ^{x^s} |Du(x,y)|^p \, dy \, dx  \notag\\
\gtrsim & \int_{-t^2} ^{-(t/2)^2} x^{s(1-p)} \, dx \approx t^{2(1+s(1-p))}.
\end{align}
Set $Q_t = f(\tilde{Q}_t).$ Then for any $z \in R_t \setminus Q_t$ there is an open disk $B_z \subset R_t \setminus Q_t$ such that $z \in B_z $ and $u|_{B_z} \equiv 1 .$
Therefore 
\begin{equation}\label{K_f^-1: 6}
\int_{Q_t} |Du|^p  \ge  \int_{Q_t \cap R_t} |Du|^p  = \int_{R_t} |Du|^p .
\end{equation}
Combining \eqref{K_f^-1: 5} with \eqref{K_f^-1: 6} gives that
\begin{equation}\label{K_f^-1: 7}
t^{2(1+s(1-p))} \lesssim \int_{Q_t} |Du|^p  
\end{equation}
for all $p \ge 1.$

For any $p \in (0,2),$ by \eqref{K_f^-1: 4-1}, \eqref{dist. ine for inv} and H\"older's inequality we have 
\begin{align}\label{K_f^-1: 8}
\int_{Q_t} |D u|^p \le &\int_{Q_t}|D v \circ f^{-1}|^p |D f^{-1}|^p \notag \\
\le & \int_{Q_t}|D v \circ f^{-1}|^p J^{\frac{p}{2}}_{f^{-1}} K^{\frac{p}{2}}_{f^{-1}} \notag \\
\le & \left(\int_{Q_t} |D v \circ f^{-1}|^2 J_{f^{-1}} \right)^{\frac{p}{2}} \left(\int_{Q_t} K^{\frac{p}{2-p}}_{f^{-1}} \right)^{\frac{2-p}{2}} \notag\\
\le & \left(\int_{\tilde{Q} _t} |D v |^2 \right)^{\frac{p}{2}} \left(\int_{Q_t} K^{\frac{p}{2-p}}_{f^{-1}} \right)^{\frac{2-p}{2}}
\end{align}
where the last inequality comes from Lemma \ref{lemma A}.
Let $q = p/(2-p) .$ Via \eqref{K_f^-1: 3} and \eqref{K_f^-1: 7}, 
we conclude from \eqref{K_f^-1: 8} that
\begin{equation}\label{K_f^-1: 9}
t^{2(1+q +s(1-q))}  \lesssim \int_{Q_t} K^q _{f^{-1}} 
\end{equation}
for all $q \ge 1.$
We now consider the set $Q_t$ for $t =2^{-j}$ with $j \ge j_0$ for a fixed large $j_0 .$
Since 
\begin{equation*}
\sum_{j =j_0} ^{\infty} \chi_{Q_{2^{-j}}} (x) \le 2 \chi_{\mathbb{D}} (x) \qquad \forall x \in \mbr^2 ,
\end{equation*}
by \eqref{K_f^-1: 9} we have that
\begin{equation}\label{K_f^-1: 10}
\sum_{j=j_0} ^{+\infty} 2^{j2(s(q-1)-q-1)} \lesssim 
\sum_{j=j_0} ^{+\infty}  \int_{Q_{2^{-j}}} K^q _{f^{-1}} \le  2\int_{\mathbb{D}} K^q _{f^{-1}}.
\end{equation}
The series in \eqref{K_f^-1: 10} diverges when $q \ge \frac{s+1}{s-1}$ and hence 
$K_{f^{-1}} \in L^q _{\loc} (\mathbb{R}^2)$ can only hold when 
 $q < (s+1)/(s-1) .$
\end{proof}

We continue with properties of our homeomorphism $f .$ The following lemma is a version of \cite[Theorem 4.4]{Guo 2014 Publ. Mat.}.
\begin{lemma}\label{necessary K_f}
Let $\mathcal{E}_s$ be as in \eqref{mathcalE_s} with $s > 1 .$ 
If $f \in \mathcal{E}_s$ and $K_f \in L^q _{loc} (\mathbb{R}^2)$ for some $q \ge 1,$ 
then $q < \max \{1, 1/(s-1)\}.$
\end{lemma}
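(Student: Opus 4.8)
\emph{Proof plan.}
By hypothesis $q\ge1$; put $p=\tfrac{2q}{q+1}\in[1,2)$. It suffices to prove that $K_f\in L^{q}_{\loc}$ forces $q(s-1)<1$, i.e.\ $q<\tfrac1{s-1}$: this is $\le\max\{1,1/(s-1)\}$, and when $s\ge2$ it already contradicts $q\ge1$, so in that range the hypothesis cannot occur and the bound $\max\{1,1/(s-1)\}=1$ holds. The proof is a capacity-type estimate on dyadic annular collars of the inner cusp of $\partial\Delta_s$, playing a lower bound for the $p$-energy of a test function against the distortion inequality \eqref{MFD :1}.

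First record the geometry near $0$. By the analysis of $\partial M_s$ in the proof of Lemma \ref{lemma: g_s and g^c _s}, near $0$ the curve $\partial M_s$ is the union of two Dini-smooth arcs tangent to the imaginary axis along which $x\approx|y|^{2s-1}$; hence a point of $\partial M_s$ at distance $r$ from $0$ is carried by $z\mapsto z^{2}$ to a point of $\partial\Delta_s$ at distance $\approx r^{2}$ from $0$, and $\Delta_s^{c}$ near $0$ is an inner cusp whose width at distance $\rho$ from $0$ is comparable to $\rho^{s}$. For $t=2^{-j}$ with $j\gg1$ set $A_t=\big(B(0,2t)\setminus\overline{B(0,t)}\big)\setminus M_s$. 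Up to an angular error $O(t^{2(s-1)})$ the set $A_t$ is a half-annulus of inner radius $t$; denote by $\Gamma_t^{\pm}$ its two boundary arcs contained in $\partial M_s$ (so $f=z^{2}$ on $\Gamma_t^{\pm}$), which lie in $B(0,2t)\setminus B(0,t)$.

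Fix a radial Lipschitz function $v_t(z)=\psi_t(|z|)$ with $\psi_t\equiv1$ on $[0,2t^{2}]$, $\psi_t\equiv0$ on $[3t^{2},\infty)$, decreasing in between. Since $Dv_t$ vanishes off $\{2t^{2}\le|z|\le3t^{2}\}$ and, for $j\gg1$, $\Delta_s^{c}\cap\{|z|\le3t^{2}\}$ is a piece of the cusp, the cusp-width estimate gives
\[
\int_{\Delta_s^{c}}|Dv_t|^{2}=\int_{\Delta_s^{c}\cap\{2t^{2}\le|z|\le3t^{2}\}}|Dv_t|^{2}\ \lesssim\ t^{2(s-1)}.
\]
Let $u_t=v_t\circ f$. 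On $\Gamma_t^{\pm}$ we have $u_t(z)=\psi_t(|z|^{2})$, so $u_t\equiv1$ on the subarcs $E_t\subset\Gamma_t^{\pm}$ where $|z|\le\sqrt2\,t$ and $u_t\equiv0$ on the subarcs $F_t\subset\Gamma_t^{\pm}$ where $|z|\ge\sqrt3\,t$. As $u_t$ is continuous on $\overline{A_t}$, for a.e.\ $c\in(0,1)$ the level set $\{u_t=c\}$ separates $E_t$ from $F_t$ inside $A_t$, and by the near-half-annulus geometry any such separating set has $\mathcal H^{1}$-measure $\gtrsim t$; the coarea formula then yields $\int_{A_t}|Du_t|\gtrsim t$, and H\"older gives $\int_{A_t}|Du_t|^{p}\gtrsim t^{2-p}$. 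On the other hand, by Lemma \ref{lemma D} the chain rule $Du_t=(Dv_t\circ f)\,Df$ holds a.e., so using \eqref{MFD :1}, H\"older with exponents $\tfrac2p,\tfrac2{2-p}$, Lemma \ref{lemma A} (with $f$ injective, so $N(f,A_t,\cdot)\le1$), and $f(A_t)\subset\Delta_s^{c}$,
\[
\int_{A_t}|Du_t|^{p}\ \le\ \Big(\int_{A_t}|Dv_t\circ f|^{2}J_f\Big)^{p/2}\Big(\int_{A_t}K_f^{q}\Big)^{(2-p)/2}\ \le\ \Big(\int_{\Delta_s^{c}}|Dv_t|^{2}\Big)^{p/2}\Big(\int_{A_t}K_f^{q}\Big)^{(2-p)/2}.
\]
Combining the two estimates and simplifying the exponents through $p=\tfrac{2q}{q+1}$ yields $\int_{A_t}K_f^{q}\gtrsim t^{2(1-q(s-1))}$.

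Since the collars $A_{2^{-j}}$ have bounded overlap, $\int_{B(0,2)}K_f^{q}\gtrsim\sum_{j\gg1}2^{2j(q(s-1)-1)}$, which diverges once $q(s-1)\ge1$; hence $K_f\in L^{q}_{\loc}$ forces $q(s-1)<1$, which is the claim. The main obstacle is the lower bound $\int_{A_t}|Du_t|^{p}\gtrsim t^{2-p}$: $u_t$ is explicit only on $\Gamma_t^{\pm}$, so this cannot be read off from the ACL property along coordinate lines and must be derived from the separation property of level sets of $u_t$, which uses the fine geometry of $\partial M_s$ near $0$ and, crucially, $p\ge1$ — for $p<1$ the estimate fails, which is why the sharp exponent is $\max\{1,1/(s-1)\}$ and not $1/(s-1)$. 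A secondary point is the verification of $\int_{\Delta_s^{c}}|Dv_t|^{2}\lesssim t^{2(s-1)}$, resting on the cusp width $\sim\rho^{s}$ near $0$.
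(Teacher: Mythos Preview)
Your plan follows the same capacity-type route as the paper: pull back a test function through $f$, bound $\int|D(v\circ f)|^{p}$ from below by an oscillation argument and from above via \eqref{MFD :1} plus H\"older, then sum over dyadic scales. The difference is in how the lower bound $\int_{A_t}|Du_t|^{p}\gtrsim t^{2-p}$ is obtained. The paper places its test function on a slab $\tilde Q_t$ of the cusp and applies the ACL property on circular arcs $\hat S(P_1,r)=S(P_1,r)\cap f^{-1}(\Omega)$ centered at the boundary point $P_1=f^{-1}\bigl((-t^{2},t^{2s})\bigr)\in\partial M_s$: for $r\in(L_t^{1},L_t^{2})$ each such arc has both endpoints on $\partial M_s$ (where $f=z^{2}$ and hence $u$ is explicit), one endpoint in $\{v=1\}$ and one in $\{v=0\}$, so $\osc u=1$ on the arc and the lower bound is immediate.

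Your coarea alternative is also correct, but the assertion ``any such separating set has $\mathcal H^{1}$-measure $\gtrsim t$'' is the crux and deserves an argument rather than an appeal to geometry. Since $v_t$ is radial, for each $c\in(0,1)$ the level set $\{u_t=c\}=f^{-1}\bigl(S(0,\rho_c)\bigr)$ is a Jordan curve meeting $\partial M_s$ exactly at the two points $p_c^{\pm}\in\Gamma_t^{\pm}$ with $|p_c^{\pm}|=\sqrt{\rho_c}\in(\sqrt2\,t,\sqrt3\,t)$; its portion in $M_s^{c}$ is an arc from $p_c^{+}$ to $p_c^{-}$, and this arc restricted to $A_t$ must either traverse the half-annulus (length $\gtrsim t$) or exit through one of the circular boundaries of $A_t$, which already costs length at least $\min\bigl(\sqrt{\rho_c}-t,\,2t-\sqrt{\rho_c}\bigr)\gtrsim t$ from each of $p_c^{\pm}$. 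With this in hand the coarea formula does give $\int_{A_t}|Du_t|\gtrsim t$, and your H\"older step to pass to exponent $p$ is fine. One last remark: your closing comment is slightly misleading---the ACL property \emph{does} deliver the lower bound directly, one simply applies it on circles centered on $\partial M_s$ rather than on coordinate lines, which is exactly what the paper does.
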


\begin{proof}
Denote 
\begin{equation*}
\Omega= \{(x_1,x_2)\in \mathbb{R}^2 : x_1 \in (-1, 0),\ x_2  \in (-|x_1|^s, |x_1|^s) \} .
\end{equation*}
For a given $t \ll 1,$ set
\begin{equation*}
\Omega^1 _t = \{(x_1,x_2)\in \Omega : x_1 \in (-1, -t^2) \}, 
\end{equation*}
\begin{equation*}
\tilde{Q} _t= \{(x_1,x_2)\in \Omega : x_1 \in [-t^2, -(\frac{t}{2})^2] \} \mbox{ and }
\Omega^2 _t =\Omega \setminus (\Omega^1 _t \cup \tilde{Q} _t).
\end{equation*}
Define
\begin{equation}\label{negative K_f :-1}
v(x_1,x_2)=
\begin{cases}
1 & \forall (x_1,x_2) \in \Omega^1 _t ,\\
1- \left( \int_{-t^2} ^{- (t/2)^2 } \frac{dx}{(-x)^{s}} \right)^{-1} \int_{-t^2} ^{x_1} \frac{dx}{(-x)^{s}} & \forall (x_1,x_2) \in \tilde{Q} _t ,\\
0 & \forall (x_1, x_2) \in \Omega^2 _t.
\end{cases}
\end{equation}
Then $v$ is a Lipschitz function on $\Omega .$
Let $u=v \circ f.$ By Lemma \ref{lemma D}, we have $u \in W^{1,1} _{\loc} (f^{-1} (\Omega))$ and 
\begin{equation}\label{negative K_f :0-2}
Du(z) = Dv (f(z)) Df(z) \qquad \mathcal{L}^2 \mbox{-a.e. }z \in f^{-1} (\Omega).
\end{equation}
Let $P_1 = f^{-1} ((-t^2,t^{2s})),\ P_2 = f^{-1} ((- (t /2)^2 , (t/2)^{2s}))$ and $O$ be the origin.
Denote by $L^1 _t$ and $L^2 _t$ the length of line segment $P_1 P_2$ and of $P_1 O ,$ respectively.
Then $L^1 _t <  L^2 _t .$
Since $f(z)=z^2$ for all $ z \in \partial M_s ,$  
we have 
\begin{equation}\label{negative K_f :0-22}
L^1 _t \approx \frac{t}{2} \mbox{ and }L^2 _t \approx t  \qquad \mbox{whenever }t \ll 1.
\end{equation}
Let $\hat{S}(P_1 , r) = S(P_1 , r) \cap f^{-1} (\Omega) .$
From the ACL-property of Sobolev functions and H\"older's inequality, we have that
\begin{equation}\label{negative K_f :0-1}
\osc_{\hat{S}(P_1 , r)} u \le \int_{\hat{S}(P_1 , r)} |D u| \, d s \le (2 \pi r)^{\frac{p-1}{p}} \left(\int_{\hat{S}(P_1 , r)} |D u|^p \, d s \right)^{\frac{1}{p}}
\end{equation}
for any $p > 1$ and $\mathcal{L}^1$-a.e. $r \in (L^1 _t ,L^2 _t) .$ 
Since $\osc_{\hat{S}(P_1 , r)} u = 1$ for all $ r \in (L^1 _t , L^2 _t),$ we conclude from \eqref{negative K_f :0-1} that
\begin{equation}\label{negative K_f :-100}
\int_{\hat{S}(P_1 , r)} |D u|^p \, d s \gtrsim r^{1-p} \qquad \mathcal{L}^1 \mbox{-a.e. } r \in (L^1 _t ,L^2 _t).
\end{equation}
Let $A_t = f^{-1} (\Omega)\cap B(P_1 , L^2 _t) \setminus \overline{B(P_1 , L^1 _t)} .$
By Fubini's theorem and \eqref{negative K_f :0-22},
we deduce from \eqref{negative K_f :-100} that 
\begin{equation}\label{negative K_f :1}
\int_{A_t} |D u|^p = \int_{L^1 _t} ^{L^2 _t} \int_{\hat{S}(P_1 , r)} |D u|^p \, ds \, dr
\gtrsim \int_{L^1 _t} ^{L^2 _t} r ^{1-p}\, dr \approx t^{2-p}.
\end{equation}
Let $Q_t = f^{-1} (\tilde{Q} _t) .$ 
From \eqref{negative K_f :-1}, we have $|Du(z)|=0$
for all $ z \in A_t \setminus Q _t .$ 
We hence conclude from \eqref{negative K_f :1} that
\begin{equation}\label{negative K_f :2}
\int_{Q_t} |D u|^p \ge  \int_{Q_t \cap A_t} |D u|^p = \int_{A_t} |D u|^p  \gtrsim t^{2-p} 
\end{equation}
for any $p \ge 1.$

From \eqref{negative K_f :0-2}, \eqref{MFD :1} and H\"older's inequality, it follows that for any $p \in (0,2)$ 
\begin{align}\label{negative K_f :3}
\int_{Q_t} |D u|^p \le &\int_{Q_t}|D v \circ f|^p |D f|^p
\le  \int_{Q_t}|D v \circ f|^p J^{\frac{p}{2}}_f K^{\frac{p}{2}}_f \notag \\
\le & \left(\int_{Q_t} |D v \circ f|^2 J_f \right)^{\frac{p}{2}} \left(\int_{Q_t} K^{\frac{p}{2-p}}_f \right)^{\frac{2-p}{2}} \notag\\
\le & \left(\int_{\tilde{Q} _t} |D v |^2 \right)^{\frac{p}{2}} \left(\int_{Q_t} K^{\frac{p}{2-p}}_f \right)^{\frac{2-p}{2}},
\end{align}
where the last inequality is from Lemma \ref{lemma A}. 
From \eqref{negative K_f :-1}, we have that
\begin{align}\label{negative K_f :0}
\int_{\tilde{Q} _t } |D v (x_1, x_2)|^2 \, dx_1 \, dx_2 = & \left(\int_{-t^2} ^{- (t/2)^2 } \frac{dx}{(-x)^{s}} \right)^{-2} \int_{-t^2} ^{- (t/2)^2 } \int_{-|x _1|^s} ^{|x _1|^s} \frac{1}{(-x _1)^{2s}} \, dx_2 \, dx_1 \notag \\
\approx & \left( \int_{-t^2} ^{- (t/2)^2 } \frac{dx}{(-x)^{s}} \right)^{-1} \approx t^{2(s-1)}.
\end{align}
Let $q =p/(2-p).$ Then $q \in [1,+\infty)$ whenever $p \in [1,2).$ 
Combining \eqref{negative K_f :0}, \eqref{negative K_f :2} with \eqref{negative K_f :3} yields
\begin{equation}\label{negative K_f :4}
t^{2+2(1-s) q} \lesssim \int_{Q_t} K^q_f 
\end{equation}
for all $q \ge 1.$
We now consider the set $Q_t$ for $t =2^{-j}$ with $j \ge j_0$ for a fixed large $j_0 .$
Analogously to \eqref{K_f^-1: 10}, 
it follows from \eqref{negative K_f :4} that
\begin{equation}\label{negative K_f :5}
\sum_{j=j_0} ^{+\infty} 2^{2j((s-1)q-1)} \lesssim 
\sum_{j=j_0} ^{+\infty}  \int_{Q_{2^{-j}}} K^q _{f} \le 2\int_{B(0,1)} K^q _{f}.
\end{equation}
Whenever $s \ge 2,$ the sum in \eqref{negative K_f :5} diverges if $q \ge 1 .$ 
Whenever $s \in (1,2),$ the sum in \eqref{negative K_f :5} also diverges if $q \ge 1/(s-1).$ Hence $K_f \in L^q _{loc} (\mathbb{R}^2)$ is possible only when $q < \max \{1, 1/(s-1)\}.$
\end{proof}

In Lemma \ref{necessary K_f}, we obtained an estimate for those $q$ for which $K_f \in L^q _{\loc} .$
We continue with the additional assumption that $f \in W^{1,p} _{\loc}$ for some $ p > 1 .$

\begin{lemma}\label{theorem Df+K_f: necessary}
Let $\mathcal{E}_s$ be as in \eqref{mathcalE_s} with $s > 2 .$ 
If $f \in \mathcal{E}_s$, $f \in W^{1,p} _{\loc} (\mbr^2 , \mbr^2)$ for some $p>1$ and $K_f \in L^q _{\loc} (\mbr^2)$ for some $q \in (0,1),$ then $q < 3p/((2s-1)p+4-2s) .$
\end{lemma}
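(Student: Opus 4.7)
The plan is to follow the template of Lemma \ref{necessary K_f}, with the new ingredient being a three-factor Hölder inequality that absorbs the extra hypothesis $f \in W^{1,p}_{\loc}$.

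First I would reuse the test function $v$ from \eqref{negative K_f :-1} and the associated sets $\tilde{Q}_t$, $A_t$, $Q_t = f^{-1}(\tilde{Q}_t)$, together with the two estimates $|Dv| \lesssim t^{-2}$ on $\tilde{Q}_t$ and $\int_{\tilde{Q}_t} |Dv|^2 \approx t^{2(s-1)}$ recorded in \eqref{negative K_f :0}. Setting $u = v \circ f$, the ACL property of Sobolev functions applied to $u$ along almost every circle $\hat{S}(P_1, r)$, together with $\osc_{\hat{S}(P_1, r)} u = 1$ and Fubini, yields the lower bound
\[
\int_{Q_t} |Du| \gtrsim t,
\]
which is the $p=1$ specialization of \eqref{negative K_f :2}.

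The heart of the argument is the upper bound. For a parameter $\sigma \in (0,1)$ to be chosen, I would split
\[
|Du| \le (|Dv \circ f|^2 J_f)^{(1-\sigma)/2}\, K_f^{(1-\sigma)/2}\, (|Dv \circ f|\, |Df|)^\sigma,
\]
by applying $|Df|^2 \le K_f J_f$ to the $(1-\sigma)$-factor and using the chain rule bound on the $\sigma$-factor. Hölder's inequality with the three exponents $2/(1-\sigma)$, $2q/(1-\sigma)$, $p/\sigma$ (whose reciprocals sum to $1$ precisely when $\sigma = (1-q)p/(p(q+1) - 2q)$), combined with Lemma \ref{lemma A}, the pointwise bound $|Dv \circ f| \lesssim t^{-2}$ on $Q_t$, and the local $L^p$-integrability of $|Df|$, then gives
\[
\int_{Q_t} |Du| \lesssim t^{(s-1)(1-\sigma) - 2\sigma}\Bigl(\int_{Q_t} K_f^q\Bigr)^{(1-\sigma)/(2q)}.
\]
A short verification using $p > 1$ and $q < 1$ confirms $\sigma \in (0,1)$ and that all three Hölder exponents exceed $1$.

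Combining the lower and upper bounds and simplifying, one arrives at $\int_{Q_t} K_f^q \gtrsim t^{\delta}$ with $\delta = [3p - q((2s-1)p + 4 - 2s)]/(p-1)$. Specializing to $t = 2^{-j}$ with $j \ge j_0$, the sets $Q_{2^{-j}}$ are pairwise disjoint inside a fixed bounded region, so
\[
\sum_{j \ge j_0} 2^{-j\delta} \lesssim \sum_{j \ge j_0} \int_{Q_{2^{-j}}} K_f^q \le \int_{B(0, R)} K_f^q < \infty,
\]
forcing $\delta > 0$, which rearranges to the desired $q < 3p/((2s-1)p + 4 - 2s)$. The main obstacle is engineering the three-factor split so that each piece matches exactly one of the available controls---$\int |Dv|^2$, $\|K_f\|_{L^q}$, and $\|Df\|_{L^p}$---with the Hölder conjugacy forcing $\sigma$ uniquely; tracking the algebra so that the final exponent collapses into $[3p - q((2s-1)p + 4 - 2s)]/(p-1)$ is where one must be most careful.
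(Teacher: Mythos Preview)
Your argument is correct, but it takes a genuinely different route from the paper's. The paper's proof is a two-line application of a black-box result: by \cite[Theorem 4.1]{Hencl 2006 Arch. Ration. Mech. Anal.}, the hypotheses $f \in W^{1,p}_{\loc}$ and $K_f \in L^q_{\loc}$ imply $f^{-1} \in W^{1,r}_{\loc}$ with $r = \frac{(q+1)p-2q}{p-q}$; then Lemma~\ref{negative part of f^-1} forces $r < \frac{2(s+1)}{2s-1}$, which is algebraically equivalent to $q < \frac{3p}{(2s-1)p+4-2s}$. Your approach instead unpacks this into a direct argument: you rerun the test-function machinery of Lemma~\ref{necessary K_f} but replace the two-term H\"older split of \eqref{negative K_f :3} by a three-term split that simultaneously uses $\int_{\tilde Q_t}|Dv|^2$, $\|K_f\|_{L^q(Q_t)}$, and the new information $\|Df\|_{L^p(Q_t)} \le C$. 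The parameter $\sigma = \frac{(1-q)p}{p(q+1)-2q}$ you identify is exactly what makes the conjugacy work, and the resulting exponent $\delta = \frac{3p - q((2s-1)p+4-2s)}{p-1}$ matches the claim. Your route is more self-contained (it avoids the external inverse-regularity theorem) and makes transparent \emph{why} the bound has this form---it is precisely the scaling balance among the three controlled quantities---whereas the paper's route is shorter and highlights that the result is really Lemma~\ref{negative part of f^-1} transported through the known inverse-regularity theory.
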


\begin{proof}
Let $f$ be a homeomorphism with the above properties. By \cite[Theorem 4.1]{Hencl 2006 Arch. Ration. Mech. Anal.} we have  
$f^{-1} \in W^{1,r} _{\loc} (\mathbb{R}^2)$ where 
\begin{equation*}
r = \frac{(q+1)p -2q}{p-q}.
\end{equation*}
Moreover
\begin{equation*}\label{|Df|+K_f: 0}
 r < \frac{2(s+1)}{2s-1} \Leftrightarrow q < \frac{3p}{(2s-1)p+4-2s}.
\end{equation*}
Hence the claim follows from Lemma \ref{negative part of f^-1}.
\end{proof}

\begin{remark}\label{lem 3.1}
Notice that in the proof of Lemma \ref{necessary K_f} we only care about the property of $f$ in a small neighborhood of the origin.
Let $t \ll 1.$ By modifying $\partial M_s \cap B(0,t),$ we may generalize Lemma \ref{necessary K_f}. For example, we modify $\partial M_{3/2} \cap B(0,t)$ such that its image under $f(z)=z^2$ is 
\begin{equation*}
\{(x,y) \in \mbr^2 : x \in [-2^{-j_0} ,0],\ y^2=c |x|^3\}
\end{equation*} 
where $c$ is a positive constant.
If $K_f \in L^q _{\loc} (\mbr^2)$ for some $q \ge 1,$ by the analogous arguments as for Lemma \ref{necessary K_f} we have $q <2 .$ Similarly, one may extend Lemma \ref{negative part of f^-1}, Lemma \ref{necessary K_f^-1} and Lemma \ref{theorem Df+K_f: necessary} to the above setting.
\end{remark}

\begin{lemma}\label{rmk inverse}
Let $\Delta_s$ be as in \eqref{definition of M_s and Delta_s} with $s >1 .$ 
Suppose that $f :\mathbb{R}^2 \rightarrow \mathbb{R}^2$ is a homeomorphism of finite distortion 
such that $f$ maps $\mathbb{D}$ conformally onto $\Delta_s .$
We have that
\begin{enumerate}
\item if $f^{-1} \in W^{1,p} _{\loc} (\mathbb{R}^2 , \mbr^2)$ for some $p \ge 1$ then $p < 2(s+1)/(2s-1) ,$
\item if $K_{f^{-1}} \in L^q _{loc} (\mathbb{R}^2 )$ for some $q \ge 1 $ then $q < (s+1)/(s-1),$
\item if $K_f \in L^q _{loc} (\mathbb{R}^2)$ for some $q \ge 1$ then $q < \max \{1, 1/(s-1)\} ,$
\item if $s>2$, $f \in W^{1,p} _{\loc} (\mbr^2 , \mbr^2)$ for some $p>1$ and $K_f \in L^q _{\loc}$ for some $q \in (0,1),$ then $q < 3p/((2s-1)p+4-2s) .$ 
\end{enumerate}
\end{lemma}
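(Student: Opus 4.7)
The plan is to reduce each of the four items to the corresponding earlier lemma (Lemma \ref{negative part of f^-1}, Lemma \ref{necessary K_f^-1}, Lemma \ref{necessary K_f} and Lemma \ref{theorem Df+K_f: necessary}, respectively), by pre-composing $f$ with a bi-Lipschitz change of coordinates that brings the configuration into the form of $\mathcal{E}_s$.

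First I would observe that, since $g_s: \mathbb{D} \to M_s$ is conformal and $z \mapsto z^2$ sends $M_s$ conformally onto $\Delta_s$ (the origin lies on $\partial M_s$ but not in $M_s$, so the squaring map is conformal on $M_s$), the composition $g_s^{\,2}: \mathbb{D} \to \Delta_s$ is a conformal map. Comparing with the hypothesis that $f|_{\mathbb{D}}$ is also a conformal map $\mathbb{D} \to \Delta_s$, there exists a conformal automorphism $\phi$ of $\mathbb{D}$, i.e.\ a M\"obius transformation, with $f|_{\mathbb{D}} = g_s^{\,2} \circ \phi$. Since $\phi$ extends smoothly across $\overline{\mathbb{D}}$, its boundary restriction is bi-Lipschitz on $\mathbb{S}^1$, and the Tukia extension argument used in Remark \ref{deduce to M_s onto Delta_s} produces an orientation-preserving bi-Lipschitz homeomorphism $\Phi: \mathbb{R}^2 \to \mathbb{R}^2$ with $\Phi|_{\overline{\mathbb{D}}} = \phi$.

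Next, set $H = G_s \circ \Phi$, which is orientation-preserving and bi-Lipschitz with a bi-Lipschitz inverse, and define $\tilde{f} = f \circ H^{-1}$. By Lemma \ref{composition}, $\tilde{f}$ is a homeomorphism of $\mathbb{R}^2$ of finite distortion. For any $z \in \overline{M_s}$, the identity $H^{-1}(z) = \phi^{-1}(g_s^{-1}(z)) \in \overline{\mathbb{D}}$ together with $f|_{\mathbb{D}} = g_s^{\,2}\circ \phi$ yields $\tilde{f}(z) = g_s^{\,2}(g_s^{-1}(z)) = z^2$, so $\tilde{f} \in \mathcal{E}_s$. Because $H$ is bi-Lipschitz, the chain rule (Lemma \ref{lemma D}) and the change-of-variables formula (Lemma \ref{lemma A}) show that $f \in W^{1,p}_{\loc}$ iff $\tilde{f} \in W^{1,p}_{\loc}$, $f^{-1} \in W^{1,p}_{\loc}$ iff $\tilde{f}^{-1} \in W^{1,p}_{\loc}$, and similarly $K_f \in L^q_{\loc}$ iff $K_{\tilde f} \in L^q_{\loc}$, $K_{f^{-1}} \in L^q_{\loc}$ iff $K_{\tilde f^{-1}} \in L^q_{\loc}$ (all up to multiplicative constants depending only on the bi-Lipschitz constants of $H$). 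Applying Lemmas \ref{negative part of f^-1}--\ref{theorem Df+K_f: necessary} to $\tilde{f}$ then yields exactly the bounds (1)--(4).

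The main technical point is the first step: producing the global bi-Lipschitz $\Phi$ and verifying that the resulting $\tilde{f}$ really equals $z^2$ on all of $\overline{M_s}$ (including at the cusp $0$ and on $\partial M_s$). Orientation preservation is automatic since $f|_{\mathbb{D}}$ and $g_s^{\,2}$ are both orientation-preserving (being complex-analytic) so that $\phi$ is orientation-preserving. After that point, the argument is routine invariance of the relevant Sobolev and distortion classes under bi-Lipschitz changes of variable, plus an appeal to the earlier lemmas.
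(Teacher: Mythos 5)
Your proposal is correct and follows essentially the same route as the paper: factoring $f|_{\mathbb{D}}$ as $g_s^{\,2}\circ\phi$ (the paper writes $h_s\circ m_s$), globalizing $\phi$ to a bi-Lipschitz $\Phi$ via Tukia's extension theorem, and reducing to $\tilde f = f\circ\Phi^{-1}\circ G_s^{-1}\in\mathcal{E}_s$ so that Lemmas \ref{negative part of f^-1}--\ref{theorem Df+K_f: necessary} apply after bi-Lipschitz invariance of the relevant $W^{1,p}_{\loc}$ and $L^q_{\loc}$-distortion classes. The paper's $F = f\circ\mathfrak{M}_s^{-1}\circ G_s^{-1}$ is exactly your $\tilde f$, and its detailed $\approx$-comparisons of $|Df^{-1}|$, $K_{f^{-1}}$, $|Df|$, $K_f$ with those of $F$ are the fleshed-out form of the invariance step you sketch.
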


\begin{proof}
Let $g_s$ be as in \eqref{g_s}, and $h_s = z^2 \circ g_s .$  
Since $h_s : \mathbb{D} \rightarrow  \Delta_s$ is conformal, there is a M\"obius transformation
\begin{equation*}
m_s (z) = e^{i \theta} \frac{z-a}{1-\bar{a} z}\qquad \mbox{where } \theta \in [0, 2\pi] \mbox{ and } |a|<1
\end{equation*}
such that $f(z)= h_s \circ m_s (z) $ for all $z \in \mbd .$ Since $m_s : \mbs^1 \rightarrow \mbs^1$ is a bi-Lipschitz mapping, by \cite[Theorem A]{Tukia 1980 Ann. Acad. Sci. Fenn. Ser. A I Math.} there is a bi-Lipschitz mapping $m^c _s : \mathbb{D}^c \rightarrow \Delta^c _s $ such that $m^c _s |_{\mathbb{S}^1} =m_s .$
Define
\begin{equation}\label{mathfrakM_s }
\mathfrak{M}_s (z) =
\begin{cases}
m_s (z) & z \in \overline{\mathbb{D}} , \\
m^c _s (z) & z \in \mathbb{D}^c .
\end{cases}
\end{equation} 
Then $\mathfrak{M}_s :\mathbb{R}^2 \rightarrow \mathbb{R}^2$ is a bi-Lipschitz, orientation-preserving mapping.
Let $G_s$ be as in \eqref{rmk inverse: 0}. Define
\begin{equation*}
F= f \circ \mathfrak{M}^{-1} _s \circ G^{-1} _s : \mbr^2 \rightarrow \mbr^2 .
\end{equation*}
Lemma \ref{composition} implies that $F \in \mathcal{E}_s ,$ where $\mathcal{E}_s$ is from \eqref{mathcalE_s}. From Lemma \ref{lemma C} and Lemma \ref{lemma B}, it follows that  
\begin{equation}\label{rmk inverse: 3}
\mbox{both } f^{-1} \mbox{ and } F^{-1} \mbox{ are differentiable }\mathcal{L}^2 \mbox{-a.e. on } \mathbb{R}^2 .
\end{equation}
Since 
\begin{align*}
\frac{|f^{-1} (z_1) - f^{-1} (z_2)|}{|z_1 -z_2|} = 
& \frac{|F^{-1} (z_1 ) - F^{-1} (z_2 )|}{|z_1- z_2 |}  \frac{|(G^{-1} _s (F^{-1} (z_1))-(G^{-1} _s (F^{-1} (z_2)) |}{|F^{-1} (z_1 ) - F^{-1} (z_2 )|} \times \\
& \times  \frac{|\mathfrak{M}^{-1}_s (G^{-1} _s \circ F^{-1} (z_1)) - \mathfrak{M}^{-1}_s (G^{-1} _s \circ F^{-1} (z_2))|}{|G^{-1} _s \circ F^{-1} (z_1) - G^{-1} _s \circ F^{-1} (z_2) |}
\end{align*}
for all $z_1, z_2 \in \mathbb{R}^2 $ with $z_1 \neq z_2 ,$ by \eqref{rmk inverse: 3} and the bi-Lipschitz properties of $G^{-1} _s $ and $\mathfrak{M}^{-1} _s$ we have that
\begin{equation}\label{rmk inverse: 4}
|Df^{-1} (z)| \approx |DF^{-1} (z)|,
\end{equation}
\begin{equation}\label{rmk inverse: 5}
\max_{\theta \in [0,2\pi]} |\partial_{\theta} f^{-1} (z)| \approx \max_{\theta \in [0,2\pi]} |\partial_{\theta} F^{-1} (z)|,\ \min_{\theta \in [0,2\pi]} |\partial_{\theta} f^{-1} (z)| \approx \min_{\theta \in [0,2\pi]} |\partial_{\theta} F^{-1} (z)| 
\end{equation}
for $\mathcal{L}^2$-a.e. $z \in \mathbb{R}^2 .$
If $f^{-1} \in W^{1,p}_{\loc} $ for some $p \ge 1 ,$
Lemma \ref{necessary K_f^-1} together with \eqref{rmk inverse: 7} gives $p < 2(s+1)/(2s-1).$
By \eqref{rmk inverse: 5} and \eqref{MFD :3} we have that
\begin{equation}\label{rmk inverse: 7}
 K_{f^{-1}} (z) \approx  K_{F^{-1}} (z) \qquad \mathcal{L}^2 \mbox{-a.e. } z \in \mathbb{R}^2 .
\end{equation} 
If $K_{f^{-1}} \in L^q _{loc} (\mathbb{R}^2)$ for some $q \ge 1 ,$
combining \eqref{rmk inverse: 4} and Lemma \ref{negative part of f^-1} then yields $q < (s+1)/(s-1).$

By Lemma \ref{composition} and and Lemma \ref{lemma B}, we have that 
\begin{equation}\label{rmk2 :4}
\mbox{both } f \mbox{ and } F \mbox{ are differentiable }\mathcal{L}^2 \mbox{-a.e. on } \mathbb{R}^2 . 
\end{equation}
From \cite[Corollary 3.7.6]{Astala 2009}, $G_s \circ \mathfrak{M}_s$ satisfies Lusin ($N$) and $(N^{-1})$ conditions.
Since 
\begin{align*}
\frac{|f(z_1 ) -f(z_2)|}{|z_1 -z_2|} = & \frac{|F (G_s \circ \mathfrak{M}_s (z_1)) -F (G_s \circ \mathfrak{M}_s (z_2))|}{|G_s \circ \mathfrak{M}_s (z_1) -G_s \circ \mathfrak{M}_s (z_2)|} \frac{|G_s (\mathfrak{M}_s (z_1)) -G_s (\mathfrak{M}_s (z_2))|}{|\mathfrak{M}_s (z_1) -\mathfrak{M}_s (z_2)|} \times \\
& \times  \frac{|\mathfrak{M}_s (z_1) -\mathfrak{M}_s (z_2)|}{|z_1 -z_2|} 
\end{align*}
for all $z_1 , z_2 \in \mathbb{R}^2$ with $z_1 \neq z_2, $ from \eqref{rmk2 :4} and the bi-Lipschitz properties of $G_s$ and $\mathfrak{M}_s$ we have that
\begin{equation}\label{rmk2 :5}
|Df(z)|  \approx |DF (G_s \circ \mathfrak{M}_s (z))|,
\end{equation}
\begin{equation}\label{rmk2 :6}
\max_{\theta \in [0,2\pi]} |\partial_{\theta} f (z)| \approx \max_{\theta \in [0,2\pi]} |\partial_{\theta} F (G_s \circ \mathfrak{M}_s (z))|,
\end{equation}
\begin{equation}\label{rmk2 :7}
 \min_{\theta \in [0,2\pi]} |\partial_{\theta} f (z)| \approx \min_{\theta \in [0,2\pi]} |\partial_{\theta} F (G_s \circ \mathfrak{M}_s (z))|
\end{equation}
for $\mathcal{L}^2$-a.e. $z \in \mathbb{R}^2 .$
By \eqref{MFD :3}, \eqref{rmk2 :6} and \eqref{rmk2 :7} we have that
\begin{equation}\label{rmk2 :8}
K_{f} (z) \approx K_F (G_s \circ \mathfrak{M}_s (z))\qquad  \mathcal{L}^2 \mbox{-a.e. } z \in \mathbb{R}^2 .
\end{equation}
Via the same reasons as for \eqref{composition: 7}, we have that
\begin{equation}\label{rmk2 :8-1}
J_{G_s \circ \mathfrak{M}_s}(z) \approx 1 \qquad \mathcal{L}^2 \mbox{-a.e. } z \in \mathbb{R}^2 .
\end{equation}
By \eqref{rmk2 :8-1} and Lemma \ref{lemma A}, we derive from \eqref{rmk2 :8} that 
\begin{align}\label{rmk2 :9}
\int_{A} K^q _{f} (z)\, dz = & \int_{A} K^q _{F} (G_s \circ \mathfrak{M}_s(z)) \frac{J_{G_s \circ \mathfrak{M}_s}(z)}{J_{G_s \circ \mathfrak{M}_s(z)}} \, dz \notag \\
\approx & \int_{A} K^q _{F} (G_s \circ \mathfrak{M}_s(z)) J_{G_s \circ \mathfrak{M}_s} (z)\, dz = \int_{G_s \circ \mathfrak{M}_s (A)} K^q _{F} (w)\, dw 
\end{align}
for any $q \ge 0$ and any compact set $A \subset \mathbb{R}^2 .$
By \eqref{rmk2 :5} and Lemma \ref{lemma A}, we obtain that
\begin{align}\label{rmk2 :10}
\int_{A} |Df(z)|^p = & \int_{A} |D F (G_s \circ \mathfrak{M}_s(z))|^p \frac{J_{G_s \circ \mathfrak{M}_s}(z)}{J_{G_s \circ \mathfrak{M}_s } (z)} \, dz \notag \\
\approx &  \int_{A} |D F (G_s \circ \mathfrak{M}_s(z))|^p J_{G_s \circ \mathfrak{M}_s} (z) \, dz = \int_{G_s \circ \mathfrak{M}_s (A)} |D F|^p (w) \, dw
\end{align}
for any $p \ge 0 .$
If $K_f \in L^q _{loc} (\mathbb{R}^2)$ for some $q \ge 1 ,$ Lemma \ref{necessary K_f} together with \eqref{rmk2 :9} gives that $q < \max \{1, 1/(s-1)\} .$
If $f \in W^{1,p} _{\loc}$ and $K_f \in L^q _{\loc}$ for some $p>1$ and some $q \in (0,1) ,$ combining Lemma \ref{theorem Df+K_f: necessary} with \eqref{rmk2 :10} then implies $q < 3p/((2s-1)p+4-2s) .$
\end{proof}

A result related to Lemma \ref{rmk inverse} ($3$) appeared in \cite[Theorem 4.4]{Guo 2014 Publ. Mat.}.

\section{Proof of Theorem \ref{theorem K_f}}

\subsection{$\mathcal{F}_s (f)\neq \emptyset$}\label{existence}

\begin{proof}
Let $g : \mathbb{D} \rightarrow \Delta_s$ be a conformal mapping with $s>1 .$
Analogously to \eqref{mathfrakM_s }, there is a bi-Lipschitz mapping $\mathfrak{M}_s : \mathbb{R}^2 \rightarrow \mbr^2 .$
Let $G_s$ be as in \eqref{rmk inverse: 0} and $\mathcal{E}_s$ be defined in \eqref{mathcalE_s}.
If $E \in \mathcal{E}_s ,$ by Lemma \ref{composition} we have $E \circ G_s \circ \mathfrak{M}_s \in \mathcal{F}_s (g) .$
We now divide the construction of $E$ into two steps:
Step $1$ deals with the construction in a neighborhood of the cusp point, see FIGURE \ref{neig};
Step $2$ gives the construction on the domain away from the cusp point. 

\bigskip

  \item [\textsf{Step} 1:]
Fix $ s >1,$ and define 
\begin{equation}\label{proof of f^-1: -1}
\eta (x) =\sqrt{x} (1+x^{2(s-1)})^{\frac{1}{4}}\qquad \mbox{ for all } x>0 .
\end{equation}
Then 
\begin{equation}\label{proof of f^-1: 0}
\eta' (x) = \frac{(1+x^{2(s-1)})^{\frac{1}{4}}}{2 \sqrt{x}} \left( 1+ \frac{(s-1) x^{2s-2}}{1+x^{2(s-1)}}\right).
\end{equation}
For a given $ t \ll 1,$ let 
\begin{equation}\label{proof of f^-1: 1}
L^1 _t = \eta( ( t/2 )^2) ,\ L^2 _t = \eta(t^2) \mbox{ and } \sigma_t  =L^2 _t -L^1 _t.
\end{equation}
Then $L^1 _t \approx t/2,\ L^2 _t \approx t$ and $\sigma_t \approx t/2 $ whenever $t \ll 1.$
Set
\begin{equation}\label{def f_1}
Q_t = \overline{B(0,L^2 _t)} \setminus ( B(0,L^1 _t) \cup M_s) \mbox{, and } f_1 (x, y)=x e^{i y} \quad \forall x \ge 0 \mbox{ and } y \in [0,2\pi].
\end{equation}
Let $\ell(r)$ be the length of $f^{-1}_1 (Q_t) \cap \{(x,y)\in \mathbb{R}^2 : x=r\}.$ 
Define
\begin{equation}\label{def f_2}
f_2 (r, \theta) = \left(r, \frac{\sigma_t }{\ell(r)} (\pi -\theta) \right) \qquad \forall (r, \theta) \in f^{-1}_1 (Q_t).
\end{equation}
Since $\partial M_s$ is mapped onto $\partial \Delta_s$ by $z^2,$
we have that
\begin{equation}\label{proof of f^-1: 2}
\ell(r) = \pi + \arctan \tau^{2(s-1)} 
\mbox{ and } r= \eta (\tau ^2)
\end{equation}
for all $\tau \in (t/2 ,t).$
Then $ \ell(r) \approx \pi$ and $r \approx \tau $ whenever $\tau \ll 1.$
From \eqref{proof of f^-1: 0}, it follows that $\frac{\partial r}{\partial \tau} \approx 1 .$
Together with $\frac{\partial \ell}{\partial \tau} \approx \tau^{2s-3},$ 
we have that
\begin{equation}\label{proof of f^-1: 3}
\frac{\partial \ell (r)}{\partial r} \approx r^{2s-3} \qquad \mbox{ for all } r \ll 1.
\end{equation}

Denote $R_t = f_2 \circ f^{-1} _{1} (Q_t).$ Then $R_t = [L^1 _t ,L^2 _t] \times [-\sigma_t /2 , \sigma_t /2] .$
Combining \eqref{def f_1} with \eqref{def f_2} implies
\begin{equation*}\label{proof of f^-1: 3-1}
f_1 \circ f^{-1} _2 (x,y) = \left(-x \cos\frac{\ell(x) y}{\sigma_t } , x \sin \frac{\ell(x) y}{\sigma_t } \right) \quad \forall (x,y) \in R_t .
\end{equation*}
Therefore 
\begin{equation}\label{D f_1 o f^-1 _2}
D f_1 \circ f^{-1} _2 (x,y)=
\begin{bmatrix}
-\cos\frac{\ell(x) y}{\sigma_t } + \frac{xy \ell'(x) }{\sigma_t } \sin\frac{\ell(x) y}{\sigma_t }
& \frac{x \ell(x) }{\sigma_t } \sin\frac{\ell(x) y}{\sigma_t } \\
 \sin \frac{\ell(x) y}{\sigma_t } + \frac{xy \ell'(x) }{\sigma_t } \cos\frac{\ell(x) y}{\sigma_t }
 & \frac{x \ell(x) }{\sigma_t } \cos\frac{\ell(x) y}{\sigma_t }
\end{bmatrix}.
\end{equation}
By \eqref{proof of f^-1: 1}, \eqref{proof of f^-1: 2} and \eqref{proof of f^-1: 3}, we deduce from \eqref{D f_1 o f^-1 _2} that
\begin{equation}\label{proof of f^-1: 4}
|D f_1 \circ f^{-1} _2 (x,y)| \lesssim 1
\mbox{ and } J_{f_1 \circ f^{-1} _2 }(x,y) = -\frac{x \ell(x) }{\sigma } \approx -1
\end{equation}
for all $t \ll 1 $ and each $(x,y) \in R_t .$
Since $K_{f_1 \circ f^{-1} _2} \ge 1, $
from \eqref{proof of f^-1: 4} we have  
\begin{equation}\label{proof of f^-1: 5}
K_{f_1 \circ f^{-1} _2} \approx 1 .
\end{equation}
By \eqref{proof of f^-1: 4} again we have that
\begin{equation}\label{proof of f^-1: 6}
|D f_2 \circ f^{-1} _1| =\frac{|adj D f_1 \circ f^{-1} _2|}{|J_{f_1 \circ f^{-1} _2 }|} \approx |D f_1 \circ f^{-1} _2 | \lesssim 1
\mbox{ and }
J_{f_2 \circ f^{-1} _1} = \frac{1}{J_{f_1 \circ f^{-1} _2 }} \approx -1.
\end{equation}
Analogously to \eqref{proof of f^-1: 5}, we have that
\begin{equation}\label{proof of f^-1: 7}
K_{f_2 \circ f^{-1} _1} (x,y) \approx 1 \qquad \forall t \ll 1 \mbox{ and } \forall (x,y) \in Q_t . 
\end{equation}

\begin{figure}[htbp]
\centering \includegraphics[bb=135 388 572 726,clip=true, scale=0.8]{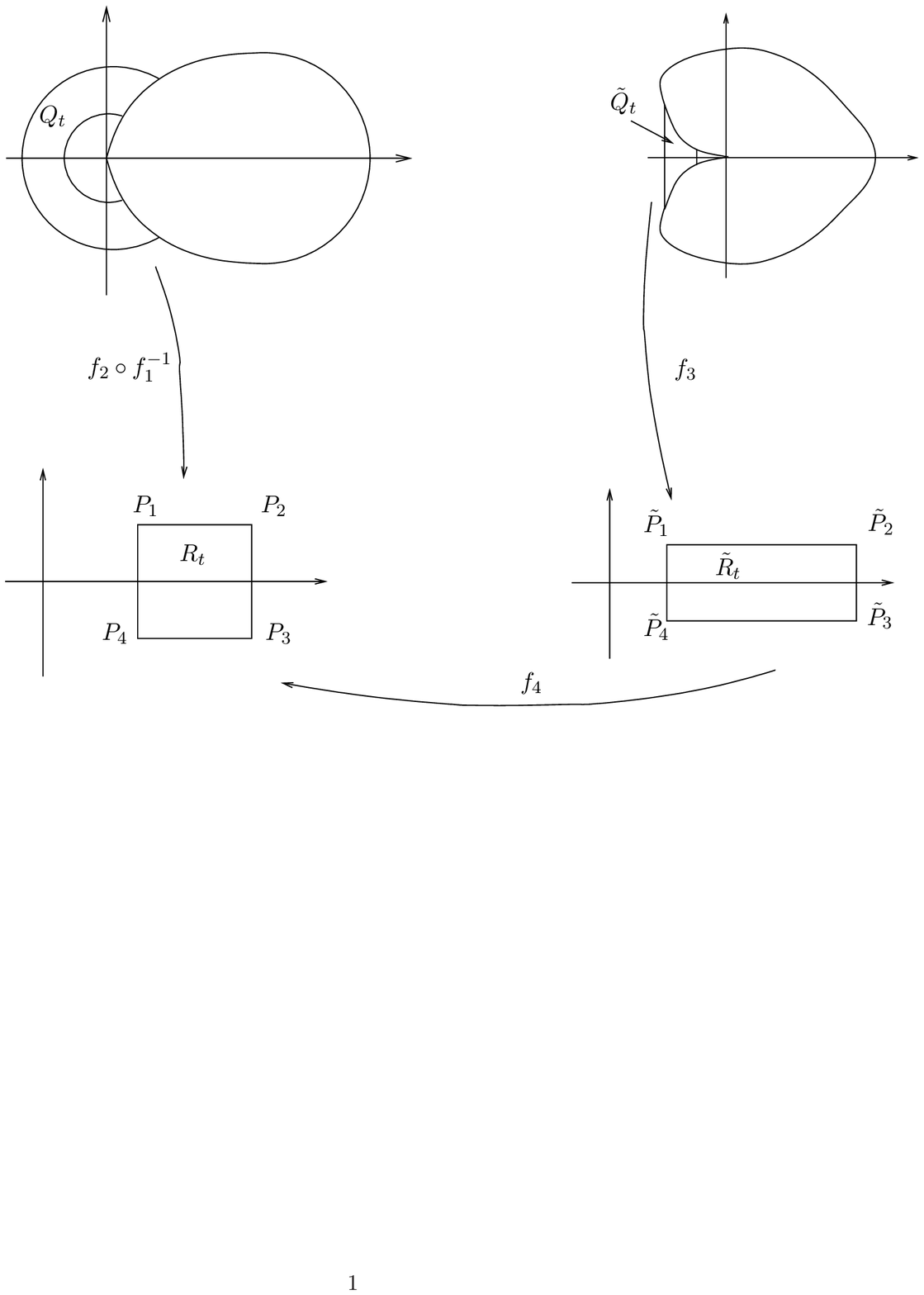}
\caption{The construction $f^{-1} _3 \circ f^{-1} _4 \circ f_2 \circ f^{-1} _1 :Q_t \rightarrow \tilde{Q}_t$}
\label{neig}
\end{figure}

Let
\begin{equation*}
\tilde{Q}_t = \{(x,y)\in \mathbb{R}^2 : x \in [-t^2, -(t/2)^2] ,\ |y| \le |x|^s\} .
\end{equation*}
Define 
\begin{equation*}\label{proof of f^-1: 7-1}
f_3 (u,v) = \left(-u, \frac{t^{2s}}{(-u)^s} v \right) \qquad \forall (u,v) \in \tilde{Q}_t.
\end{equation*}
Then $f_3$ is diffeomorphic and
\begin{equation}\label{proof of f^-1: -8}
Df_3(u,v)=
\begin{bmatrix}
-1 & 0 \\
 \frac{s t^{2s}}{(-u)^{s+1}} v & \frac{t^{2s}}{(-u)^s}
\end{bmatrix}.
\end{equation}
From \eqref{proof of f^-1: -8} we have that
\begin{equation}\label{proof of f^-1: 8}
|Df_3| \lesssim 1 \mbox{ and } J_{f_3} \approx -1 \qquad \forall (u,v) \in \tilde{Q}_t.
\end{equation}
Analogously to \eqref{proof of f^-1: 5}, we have that
\begin{equation}\label{proof of f^-1: 9}
K_{f_3}(u,v) \approx 1 \qquad \forall t \ll 1 \mbox{ and }  \forall (u,v) \in \tilde{Q}_t .
\end{equation}
Let $\tilde{R}_t = f_3 (\tilde{Q}_t).$ Then $\tilde{R}_t = [(t/2)^2 , t^2] \times [-t^{2s} ,t^{2s}].$
The same reasons as for \eqref{proof of f^-1: 6} and \eqref{proof of f^-1: 7} imply that
\begin{equation}\label{proof of f^-1: 10}
|Df^{-1} _3 (x,y)| \lesssim 1,\ J_{f^{-1} _3} (x,y)\approx -1 \mbox{ and } K_{f^{-1} _3} (x,y)\approx 1 
\end{equation}
for all $t \ll 1$ and $(x,y) \in \tilde{R}_t .$

Denote by 
$ P_1, P_2, P_3, P_4 $ and $\tilde{P}_1 , \tilde{P}_2, \tilde{P}_3 ,\tilde{P}_4$
the four vertices of $\tilde{R}_t$ and $R_t ,$ respectively.
Then 
\begin{equation*}
P_1 =(L^1 _t , \frac{\sigma_t }{2}),\ P_2 =(L^2 _t , \frac{\sigma_t}{2}),\ 
P_3 =(L^2 _t , -\frac{\sigma_t }{2}),\ P_4 =(L^1 _t , -\frac{\sigma_t }{2}) 
\end{equation*}
and
\begin{equation*}
\tilde{P}_1 = ((t/2)^2 ,  t^{2s}),\ \tilde{P}_2 = (t^2 , t^{2s}),\ \tilde{P}_3 = (t^2 , -t^{2s}),\ \tilde{P}_4 = ((t/2)^2 , -t^{2s}). 
\end{equation*}
Since $\partial M_s$ is mapped onto $\partial \Delta_s$ by $z^2 ,$ the line segment $\tilde{P}_1 \tilde{P}_2$ is mapped onto $P_1 P_2$ by 
\begin{equation*}
(u,t^{2s}) \mapsto \left(\eta (u),\frac{\sigma_t}{2} \right)\qquad \forall u \in [(t/2)^2 ,t^2],
\end{equation*}
and the line segment $\tilde{P}_4 \tilde{P}_3$ is mapped onto $P_4 P_3$ by 
\begin{equation*}
(u,-t^{2s}) \mapsto \left(\eta (u), -\frac{\sigma_t }{2} \right)\qquad \forall u \in [(t/2)^2 ,t^2].
\end{equation*}
Define
\begin{equation}\label{proof of f^-1: 10-0}
f_4 (u,v) = \left(\eta (u), \frac{\sigma_t }{2 t^{2s}}v \right) \qquad \forall (u,v) \in \tilde{R}_t.
\end{equation}
Then $f_4$ is a diffeomorphism from $\tilde{R}_t$ onto $R_t $
and
\begin{equation}\label{proof of f^-1: 10-01}
D f_4 (u,v)=
\begin{bmatrix}
\eta' (u) & 0 \\
0 & \frac{\sigma_t }{2 t^{2s}}
\end{bmatrix}.
\end{equation}
By \eqref{proof of f^-1: 0} and \eqref{proof of f^-1: 1} we have that $\eta' (u) \approx t^{-1}$ and $\frac{\sigma_t }{2 t^{2s}} \approx t^{1-2s} $ whenever $t \ll 1$ and $(u,v) \in \tilde{R}_t .$ It follows from \eqref{proof of f^-1: 10-01} that
\begin{equation}\label{proof of f^-1: 10-1}
|Df_4 (u,v)| \approx t^{1-2s} \mbox{ and } J_{f_4} (u,v) \approx t^{-2s}
\end{equation}
for all $t \ll 1$ and all $(u,v) \in \tilde{R}_t .$  Then 
\begin{equation}\label{proof of f^-1: 11}
K_{f_4} (u,v)= \frac{|Df_4 (u,v)|^2}{J_{f_4} (u,v)} \approx t^{2-2s} \qquad \forall t \ll 1 \mbox{ and }  (u,v) \in \tilde{R}_t . 
\end{equation}
The same reasons as for \eqref{proof of f^-1: 6} and \eqref{proof of f^-1: 7} imply that
\begin{equation}\label{proof of f^-1: 12}
|Df^{-1} _4 (x,y)| \approx t,\ J_{f^{-1} _4} (x,y) \approx t^{2s} \mbox{ and } K_{f^{-1} _4} (x,y) \approx t^{2-2s}
\end{equation}
for all $t \ll 1$ and all $(x,y) \in R_t .$

Define
\begin{equation*}
F_t=f^{-1} _3 \circ f^{-1}_4 \circ f_2 \circ f^{-1} _{1}  .
\end{equation*}
Then $F_t$ is a diffeomorphism from $Q_t$ onto $\tilde{Q}_t .$  Therefore   
\begin{equation*}
DF_t (z)= Df^{-1} _3 (f^{-1}_4 \circ f_2 \circ f^{-1} _{1} (z)) Df^{-1}_4 ( f_2 \circ f^{-1} _{1} (z)) D (f_2 \circ f^{-1} _{1}) (z)
\end{equation*}  
for all $z \in Q_t  .$
From \eqref{proof of f^-1: 10}, \eqref{proof of f^-1: 12} and \eqref{proof of f^-1: 6} it then follows that 
\begin{align}\label{proof of f^-1: 12-1}
\int_{Q_t} |DF_t |^p \, dz\le & \int_{Q_t}  |D f^{-1} _3 (f^{-1} _4 \circ f_2 \circ f^{-1} _1) |^p |Df^{-1} _4 (f_2 \circ f^{-1} _1 )|^p |D f_2 \circ f^{-1} _1 |^p \, dz\notag\\
\lesssim & t^{p} \mathcal{L}^2(Q_t) \approx t^{2+p}
\end{align}
for any $p \ge 0.$
By Lemma \ref{lemma A} we have that
\begin{align}\label{proof of f^-1: 12-2}
\int_{Q_t} |J_{F_t} (z)| \, dz = & \int_{Q_t}  |J_{ f^{-1} _3} (f^{-1} _4 \circ f_2 \circ f^{-1} _1 (z))| |J_{f^{-1} _4 }(f_2 \circ f^{-1} _1 (z))| |J_{f_2 \circ f^{-1} _1 }(z)| \, dz\notag \\
\le & \int_{f_2 \circ f^{-1} _1 (Q_t)} |J_{ f^{-1} _3} (f^{-1} _4 )| |J_{f^{-1} _4 } | \notag\\
\le & \int_{f^{-1} _4  \circ f_2 \circ f^{-1} _1 (Q_t)} |J_{ f^{-1} _3} | \le  \mathcal{L}^2 (\tilde{Q}_t ).
\end{align}
For a fixed large $j_0,$
we now consider the set $Q_t$ with $t=2^{-j}$ for all $j \ge j_0 .$
Define
\begin{equation}\label{definition of E_1}
E_1 = \sum_{j=j_0} ^{+\infty} F_{2^{-j}} \chi_{Q_{2^{-j}}}.
\end{equation}
Denote $\Omega_1 = \cup_{j=j_0} ^{+\infty} Q_{2^{-j}} \mbox{ and } \tilde{\Omega}_1 = \cup_{j=j_0} ^{+\infty} \tilde{Q}_{2^{-j}}.$
Then $E_1$ is a homeomorphism from $\Omega_1$ onto $\tilde{\Omega}_1 ,$ and satisfies \eqref{MFD :1} for $E_1$ on $\mathcal{L}^2$-a.e. $\Omega_1 .$ 
In order to prove that $E_1$ has finite distortion on $\Omega_1 ,$ it thus suffices to prove that $E_1 \in W^{1,1} _{\loc} (\Omega_1)$ and $J_{E_1} \in L^1 _{\loc} (\Omega_ 1).$ Actually, 
from \eqref{proof of f^-1: 12-1} and \eqref{proof of f^-1: 12-2} we have that
\begin{equation}\label{proof of f^-1: 12-3}
\int_{\Omega_1} |D E_1|^p =\sum_{j=j_0} ^{+\infty} \int_{Q_{2^{-j}}} |DF_{2^{-j}} (z)|^p \, dz \lesssim \sum_{j=j_0} ^{+\infty} 2^{-j(2+p)} <\infty
\end{equation}
and 
\begin{equation}\label{proof of f^-1: 12-4}
\int_{\Omega_1} |J_{E_1}|= \sum_{j=j_0} ^{\infty} \int_{Q_{2^{-j}}} |J_{F_{2^{-j}}} | \le \sum_{j=j_0} ^{\infty} \mathcal{L}^2(\tilde{Q}_{2^{-j}} ) = \mathcal{L}^2(\tilde{\Omega}_1) <\infty
\end{equation}
for all $p \ge 1 .$ 

\bigskip
  \item [\textsf{Step} 2:]
Denote
\begin{equation*}
\Omega_2 = M_s ^c \setminus \Omega_1 \mbox{ and } \tilde{\Omega}_2 = \Delta_s ^c \setminus \tilde{\Omega}_1 .
\end{equation*}
Notice that both $\partial \Omega_2$ and $\partial \tilde{\Omega}_2$
are piecewise smooth Jordan curves with non-zero angles at the two corners. Therefore both $\partial \Omega_2$ and $\partial \tilde{\Omega}_2$ are chord-arc curves.
By \cite{Jerison 1982 Math. Scand.} there are bi-Lipschitz mappings 
\begin{equation}\label{H_1 and H_2}
H_1 : \mathbb{R}^2 \rightarrow \mathbb{R}^2 \mbox{ and } H_2 : \mathbb{R}^2 \rightarrow \mathbb{R}^2
\end{equation}
such that 
$H_1 (\mathbb{S}^1) = \partial \Omega_2$ and $H_2 (\mathbb{S}^1) = \partial \tilde{\Omega}_2 .$
Define
\begin{equation*}\label{h}
h(z)=
\begin{cases}
E_1 (z) & \forall z \in \partial \Omega_2 \cap  \partial \Omega_1 ,\\
z^2 & \forall z \in \partial \Omega_2 \cap \partial M_s.
\end{cases}
\end{equation*}
Then $h$ is a bi-Lipschitz mapping in terms of the arc lengths.
By the chord-arc properties of both $\partial \Omega_2$ and $\partial \tilde{\Omega}_2 ,$ we have that $h$ is also a bi-Lipschitz mapping with respect to the Euclidean distances. 
Taking \eqref{H_1 and H_2} into account, we conclude that
$H^{-1} _2 \circ h \circ H_1 : \mathbb{S}^1 \rightarrow \mathbb{S}^1 $ is a bi-Lipschitz mapping.  
By \cite[Theorem A]{Tukia 1980 Ann. Acad. Sci. Fenn. Ser. A I Math.} there is then a bi-Lipschitz mapping 
\begin{equation}\label{H_0}
H : \mathbb{R}^2 \rightarrow \mathbb{R}^2
\end{equation}
such that $H |_{\mathbb{S}^1} = H^{-1} _2 \circ h \circ H_1 .$
Define
\begin{equation}\label{E_2}
E_2 = H _2 \circ H \circ H^{-1} _1 .
\end{equation}
By \eqref{H_1 and H_2} and \eqref{H_0}, we have that
$E_2$
is a bi-Lipschitz extension of $h .$ 
Furthermore since $\deg_{M_s} (h, w) =1 ,$ we obtain that $E_2$ is orientation-preserving. Hence $E_2$ is a quasiconformal mapping. The same reasons as for \eqref{composition: 6} and \eqref{composition: 7} imply
\begin{equation}\label{key tool: 1}
|DE_2 (z)|,\ K_{E_2} (z) \mbox{ and } J_{E_2} (z) \mbox{ are bounded from both above and below}
\end{equation}
for $\mathcal{L}^2$-a.e. $z \in \mathbb{R}^2, $
and 
\begin{equation}\label{key tool: 2}
|DE^{-1} _2 (w)|,\ K^{-1} _{E_2} (w) \mbox{ and } J^{-1} _{E_2} (w) \mbox{ are bounded from both above and below}
\end{equation}
for $\mathcal{L}^2$-a.e. $w \in \mathbb{R}^2 .$

Via \eqref{definition of E_1} and \eqref{E_2}, we define
\begin{equation}\label{definition of E}
E(x,y)=
\begin{cases}
E_1 (x,y)& \mbox{ for all }(x,y) \in \Omega_1, \\
E_2 (x,y)& \mbox{ for all }(x,y) \in \Omega_2 ,\\
(x^2 -y^2 , 2xy) & \mbox{ for all }(x,y) \in \overline{M_s}.
\end{cases}
\end{equation}
By the properties of $E_1$ and $E_2,$ we conclude that $E \in \mathcal{E}_s .$ 
\end{proof}

\subsection{\eqref{theorem |df|}, \eqref{theorem f^-1 : 1} and \eqref{theorem f^-1 : 2}}

\begin{proof}[Proof of \eqref{theorem |df|}]
Let $g :\mathbb{D} \rightarrow \Delta_s$ be conformal, where $\Delta_s$ is defined in \eqref{definition of M_s and Delta_s} with $s >1 .$
In order to prove \eqref{theorem |df|}, it is enough to construct
$f \in \mathcal{F}_s (g)$ such that $f \in W^{1,p} _{\loc} (\mathbb{R}^2 , \mbr^2)$ for all $p \ge 1 .$ 
let $E$ be as in \eqref{definition of E}. Then $E \in \mathcal{E}_s .$
By \eqref{proof of f^-1: 12-3}, \eqref{key tool: 1} and the fact that $E(z)=z^2$ for all $ z \in M_s,$ we obtain that $E \in W^{1,p} _{\loc} (\mathbb{R}^2 , \mbr^2)$ for all $p \ge 1 .$
Let $G_s$ be as in \eqref{rmk inverse: 0} and $\mathfrak{M}_s$ be as in \eqref{mathfrakM_s }.
By Lemma \ref{composition} and the analogous arguments as for \eqref{rmk2 :10}, we can define $ f = E \circ G_s \circ \mathfrak{M}_s .$ 
\end{proof}

\begin{proof}[Proof of \eqref{theorem f^-1 : 1}]
Let $g :\mathbb{D} \rightarrow \Delta_s$ be conformal, where $\Delta_s$ is defined in \eqref{definition of M_s and Delta_s} with $s >1 .$ 
In order to prove \eqref{theorem f^-1 : 1}, by Lemma \ref{rmk inverse} ($1$) it is enough to construct a mapping $f \in \mathcal{F}_s (g)$ such that $f^{-1} \in W^{1,p}_{\loc} (\mathbb{R}^2 , \mbr^2)$ for all $p < 2(s+1) /(2s-1).$
Let $G_s$ be as in \eqref{rmk inverse: 0} and $\mathfrak{M}_s$ be defined in \eqref{mathfrakM_s }.
If there is a mapping $E \in \mathcal{E}_s$ such that $E^{-1} \in W^{1,p}_{\loc} (\mathbb{R}^2 ,\mbr^2)$ for all $p < 2(s+1) /(2s-1),$
by Lemma \ref{composition} and analogous arguments as for \eqref{rmk inverse: 4} we can define $f = E \circ G_s \circ \mathfrak{M}_s .$  

Let $E$ be as in \eqref{definition of E}. 
Then $E \in \mathcal{E}_s .$
By \eqref{proof of f^-1: 8}, \eqref{proof of f^-1: 10-1} and 
\eqref{proof of f^-1: 4} we have that 
\begin{equation*}
|DF^{-1} _{2^{-j}} (w)| \le |D f_1 \circ f^{-1}_2 (f_4 \circ f_3 (w))| |Df_4  (f_3 (w))||Df_3 (w)|
\lesssim 2^{j(2s-1)}
\end{equation*}
for all $j \ge j_0$ and $\mathcal{L}^2$-a.e. $w \in \tilde{Q}_{2^{-j}} .$
Together with $\mathcal{L}^2(\tilde{Q}_{2^{-j}} )\approx 2^{-2j(s+1)},$ we hence obtain that 
\begin{equation}\label{int_widetilde Omega_1 |DE^-1 _1|^p}
\int_{\tilde{\Omega}_1} |DE^{-1}_1|^p =  \sum_{j =j_0} ^{+\infty} \int_{\tilde{Q}_{2^{-j}}} |DF^{-1} _{2^{-j}}|^p 
\lesssim  \sum_{j =j_0} ^{+\infty} 2^{-j(2(s+1)+p(1-2s))} <\infty
\end{equation}
for all $p < 2(s+1)/(2s-1).$
Since 
\begin{equation}\label{E^-1 on Delta_s : 2-1}
|DE^{-1} (u,v)| \lesssim (u^2 +v^2)^{-1/4} \qquad \forall (u,v) \in \Delta_s ,
\end{equation}
by a change of variables we have that 
\begin{equation}\label{E^-1 on Delta_s : 3-1}
\int_{\Delta_s} |DE^{-1} (w)|^p \, dw \lesssim \int_{0} ^{2 \pi} \int_{0} ^{1} r^{1-\frac{p}{2}} \, dr \, d\theta \approx \int_{0} ^{1} r^{1-\frac{p}{2}} \, dr <\infty
\end{equation}
for all $p < 2(s+1)/(2s-1) .$ 
By \eqref{key tool: 2}, \eqref{int_widetilde Omega_1 |DE^-1 _1|^p} and \eqref{E^-1 on Delta_s : 3-1}, we conclude that $E^{-1} \in W^{1,p}_{\loc} (\mathbb{R}^2 ,\mbr^2)$ for all $p < 2(s+1) /(2s-1).$
\end{proof}

\begin{proof}[Proof of \eqref{theorem f^-1 : 2}]
Let $g :\mathbb{D} \rightarrow \Delta_s$ be conformal, where $\Delta_s$ is defined in \eqref{definition of M_s and Delta_s} with $s >1 .$ 
In order to prove \eqref{theorem f^-1 : 2}, by Lemma \ref{rmk inverse} ($2$) it is enough to construct a mapping $f \in \mathcal{F}_s (g)$ such that $K_{f^{-1}} \in L^{q}_{\loc} (\mathbb{R}^2)$ for all $q < (s+1) /(s-1).$
Let $G_s$ be as in \eqref{rmk inverse: 0} and $\mathfrak{M}_s$ be as in \eqref{mathfrakM_s }.
If there is a mapping $E \in \mathcal{E}_s$ such that $K_{E^{-1}} \in L^{q}_{\loc} (\mathbb{R}^2)$ for all $q < (s+1) /(s-1),$
by Lemma \ref{composition} and analogous argument as for \eqref{rmk inverse: 7} we can define $f = E \circ G_s \circ \mathfrak{M}_s .$ 

Let $E$ be as in \eqref{definition of E}. 
Then $E \in \mathcal{E}_s .$
From \eqref{proof of f^-1: 5}, \eqref{proof of f^-1: 11} and \eqref{proof of f^-1: 9}, we have that
\begin{equation*}
K_{F^{-1} _{2^{-j}}} (w) = K_{f_1 \circ f^{-1} _2} (f_4 \circ f_3 (w)) K_{f_4} (f_3 (w)) K_{f_3} (w) \approx 2^{j(2s-2)}
\end{equation*}
for all $j \ge j_0 $ and $\mathcal{L}^2$-a.e. $w \in \tilde{Q}_{2^{-j}} .$ 
Together with $\mathcal{L}^2(\tilde{Q}_{2^{-j}} )\approx 2^{-j2(s+1)},$ we then obtain that
\begin{equation}\label{buzhi 1}
\int_{\tilde{\Omega}_1} K^q _{E^{-1}}=  \sum_{j =j_0} ^{+\infty} \int_{\tilde{Q}_{2^{-j}}} K^q _{F^{-1} _{2^{-j}}}
\lesssim  \sum_{j =j_0} ^{+\infty} 2^{2j[(s-1)q -(s+1)]} <\infty 
\end{equation}
for all $q < (s+1) /(s-1).$
By \eqref{key tool: 2}, \eqref{buzhi 1} and the fact that $E$ is conformal on $M_s ,$
we conclude that $K_{E^{-1}} \in L^{q}_{\loc} (\mathbb{R}^2)$ for all $q < (s+1) /(s-1) .$

\end{proof}

\subsection{\eqref{theorem K_f: 1}}\label{proof of delta_1}

\begin{proof}
Let $g :\mathbb{D} \rightarrow \Delta_s$ be conformal, where $\Delta_s$ is defined as \eqref{definition of M_s and Delta_s} with $s >1 .$
In order to prove \eqref{theorem K_f: 1}, via Lemma \ref{rmk inverse} ($3$) it is enough to construct a mapping $f \in \mathcal{F}_s (g)$ such that $K_{f} \in L^q _{\loc} $ for all $q < \max\{1, 1/(s-1)\} .$
Let $G_s$ be as in \eqref{rmk inverse: 0} and $\mathfrak{M}_s$ be as in \eqref{mathfrakM_s }. 
If $E \in \mathcal{E}_s$ such that $K_{E} \in L^q _{\loc} $ for all $q < \max\{1, 1/(s-1)\} ,$
by Lemma \ref{composition} and analogous arguments as for \eqref{rmk2 :9} we can define $f = E \circ G_s \circ \mathfrak{M}_s .$

Let $E$ be as in \eqref{definition of E}.
Then $E \in \mathcal{E}_s .$
From \eqref{proof of f^-1: 10}, \eqref{proof of f^-1: 12} and \eqref{proof of f^-1: 7}, it follows that 
\begin{equation*}
K_{F_{2^{-j}}} (z) = K_{f^{-1} _3} (f^{-1} _4 \circ f_2 \circ f^{-1} _{1} (z))
K_{f^{-1} _4} (f_2 \circ f^{-1} _{1} (z)) K_{f_2 \circ f^{-1} _{1}} (z) 
\approx  2^{2j(s-1)}
\end{equation*}
for all $j \ge j_0 $ and $\mathcal{L}^2$-a.e. $z \in Q_{2^{-j}}.$ Together with $\mathcal{L}^2(Q_{2^{-j}}) \approx 2^{-2j}$ we then have that
\begin{equation}\label{proof of K_f :0}
\int_{\Omega_1} K^q _{E} = \sum_{j=j_0} ^{+\infty} \int_{Q_{2^{-j}}} K^q _{F_{2^{-j}}} \approx  \sum_{j=j_0} ^{+\infty} 2^{2j(q(s-1) -1)} <\infty
\end{equation}
for all $q < 1/(s-1) .$
By \eqref{proof of K_f :0}, \eqref{key tool: 1} and the fact that $E$ is conformal on $M_s ,$ we conclude that $K_E \in L^q _{\loc} (\mathbb{R}^2)$ for all $q < 1/(s-1) .$ 
Therefore we have proved \eqref{theorem K_f: 1} whenever $s \in (1,2).$ 

We next consider the case $s \in [2 ,\infty ) .$ It is enough to construct a mapping $E \in \mathcal{E}_s$ such that $K_{E} \in L^q _{\loc} $ for all $q < 1.$ 
Except for redefining $f^{-1} _4 : R_t \rightarrow \tilde{R}_t $ as in \eqref{proof of f^-1: 10-0},
we follow all processes in Section \ref{existence} to define a new $E,$ see FIGURE \ref{126}.
Let $\alpha_t$ and $\beta_t$ be the length of sides of $\tilde{R}_t,$ and $\gamma_t$ be the length of a side of $R_t.$ Whenever $t \ll 1,$ we have that
\begin{equation}\label{proof of K_f :1}
\alpha_t = t^2 -(t/2)^2 \approx t^2,\ \beta_t =2t^{2s} \mbox{ and } \gamma_t =\eta(t^2) -\eta ((t/2)^2) \approx t.
\end{equation}
Let $\tilde{T}_0 =\tilde{Q}_1 \tilde{Q}_2 \tilde{Q}_3 \tilde{Q}_4 $ be the concentric square of $\tilde{R}_t $ with side length $\beta_t /2 .$
Set
\begin{equation}\label{delta_t}
\delta_t =\exp(- t^{-1}) \qquad \mbox{for } t >0
\end{equation}
and let $T_0 = Q_1 Q_2 Q_3 Q_4 $ be the concentric square of $R_t$ with side length $\gamma_t (1-2 \delta_t).$ 
We divide $R_t \setminus T_0$ into four isosceles trapezoids $T_1,\ T_2,\ T_3 $ and $T_4 .$ Similarly, we obtain isosceles trapezoids $\tilde{T}_1,\ \tilde{T}_2,\ \tilde{T}_3,\ \tilde{T}_4 $ from $\tilde{R}_t \setminus \tilde{T}_0 ,$ see FIGURE \ref{126}.

\begin{figure}[htbp]
\centering \includegraphics[bb=138 553 571 726, clip=true, scale=0.8]{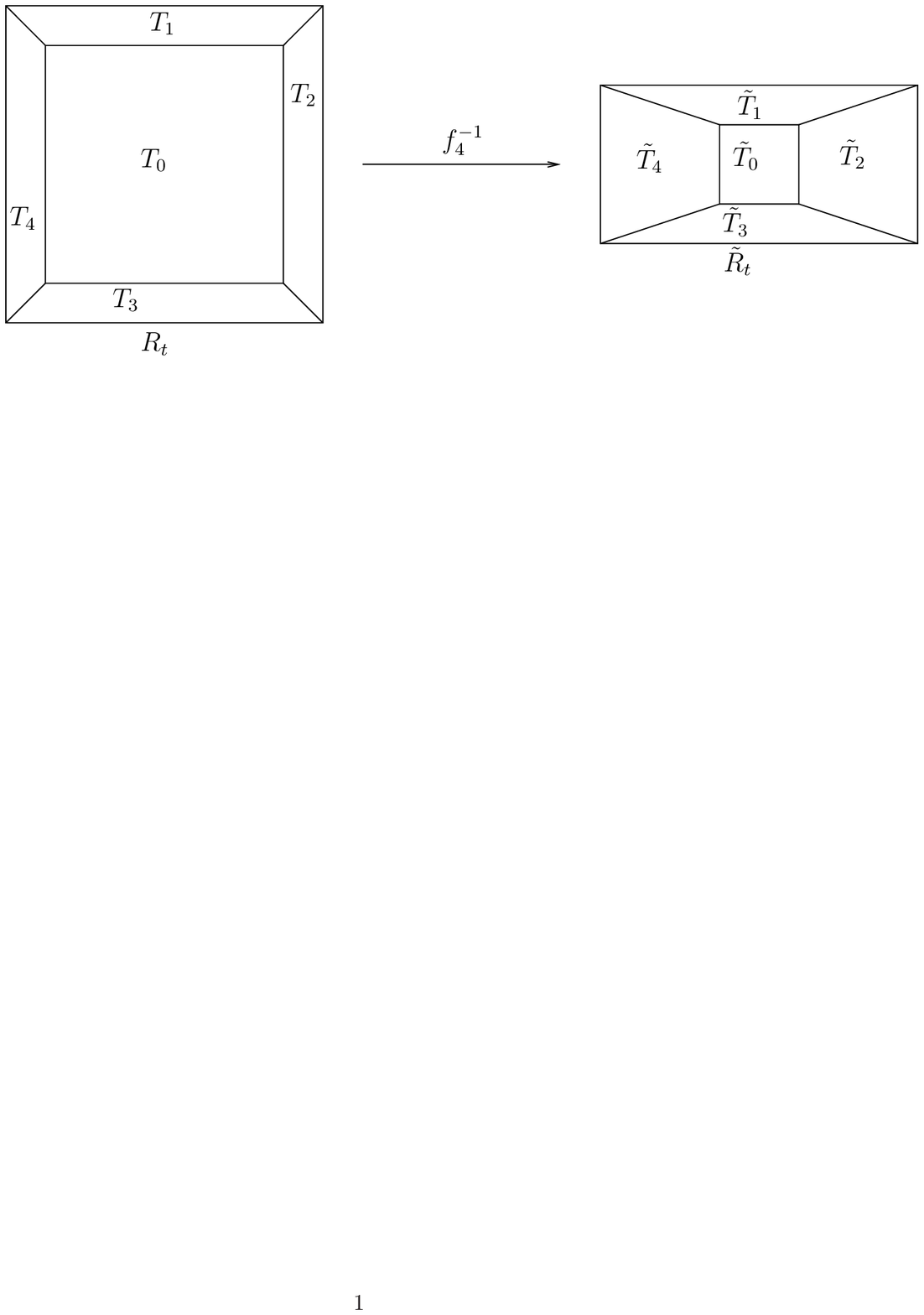}

\caption{The redefined $f^{-1} _4 :R_t \rightarrow \tilde{R}_t$}
\label{126}
\end{figure}

We first define a diffeomorphism from $T_1$ onto $\tilde{T}_1 .$ Define
\begin{equation}\label{proof of K_f :2}
A_2 (x,y) = \frac{\beta_t}{4 \delta_t \gamma_t} \left(y-\gamma_t \big(\frac{1}{2} -\delta_t \big) \right) +\frac{\beta_t}{4} \qquad \forall (x,y) \in T_1 .
\end{equation}
For a given $(x,y) \in T_1 ,$ let $(x_p, y) = P_1 Q_1 \cap \{(X,Y) \in \mathbb{R}^2 : Y=y\}$, $(\tilde{x}_p , A_2 ) = \tilde{P}_1  \tilde{Q}_1  \cap \{(X,Y)\in \mathbb{R}^2 : Y= A_2 (x,y) \}$, 
$\ell (y)$ be the length of $T_1 \cap \{(X,Y): Y= y\},$ and $\tilde{\ell} (y) $ be the length of $\tilde{T}_1 \cap \{(X,Y): Y= A_2\}.$
Denote $(P_1)_1$ by the first coordinate of $P_1 .$
Then
\begin{equation}\label{proof of K_f :4-0}
x_p = -y + \frac{\gamma_t}{2} + (P_1)_1 \mbox{ and }
\tilde{x}_p = \frac{2\alpha_t -\beta_t}{\beta_t} \left(\frac{\beta_t}{2} -A_2 \right) +(\tilde{P}_1)_1 ,
\end{equation}
\begin{equation}\label{proof of K_f :4-1}
\ell(y) = 2y \approx \gamma_t 
\mbox{ and } \tilde{\ell} (y) = \frac{4 \alpha_t -2 \beta_t}{\beta_t} A_2 (x,y) + \beta_t -\alpha_t \ge \frac{\beta_t}{2}.
\end{equation}
Let $u=\frac{\gamma_t}{\ell (y)} (x-x_p) + (P_1)_1$ for $(x,y) \in T_1 ,$ and $\eta$ be as in \eqref{proof of f^-1: -1}. 
Define
\begin{equation}\label{proof of K_f :6}
A_1 (x,y) = \frac{\tilde{\ell} (y)}{\alpha_t} \left(\eta^{-1} (u) -(\tilde{P}_1)_1 \right) + \tilde{x}_p \qquad \forall (x,y) \in T_1 .
\end{equation} 
By \eqref{proof of K_f :6} and \eqref{proof of K_f :2},
we have that
\begin{equation}\label{proof of K_f :7}
A= (A_1 , A_2 )
\end{equation}
is a diffeomorphism from $T_1$ onto $\tilde{T}_1 .$ 
We next give some estimates for $A .$
By \eqref{proof of K_f :1} we have that
\begin{equation}\label{proof of K_f :3}
\frac{\partial A_2 (x,y)}{\partial y} = \frac{\beta_t}{4 \delta_t \gamma_t} \approx \frac{t^{2s-1}}{\delta_t}  \qquad \forall (x,y) \in T_1.
\end{equation}
From \eqref{proof of f^-1: 0}, \eqref{proof of K_f :4-1} and \eqref{proof of K_f :1} it follows that
\begin{equation}\label{proof of K_f :8}
\frac{\partial A_1 (x,y)}{\partial x} = \frac{\tilde{\ell} (y)}{\alpha_t} (\eta^{-1})' (u) \frac{\partial u}{\partial x} \approx \frac{\tilde{\ell} (y)}{t} \qquad \forall (x,y) \in T_1.
\end{equation}
Moreover, by \eqref{proof of K_f :4-0} and \eqref{proof of K_f :4-1} we have that
\begin{equation}\label{proof of K_f :4}
\frac{\partial x_p}{\partial y} =-1,\ 
\frac{\partial \tilde{x}_p}{\partial y} =\frac{\beta_t -2\alpha_t}{\beta_t} \frac{\partial A_2}{\partial y} ,\ 
\frac{\partial \ell (y)}{\partial y} =2 \mbox{ and } 
\frac{\partial \tilde{\ell}(y)}{\partial y} = \frac{4 \alpha_t -2 \beta_t}{\beta_t}  \frac{\partial A_2}{\partial y}.
\end{equation}
It follows from \eqref{proof of K_f :4} that 
\begin{align}\label{proof of K_f :9}
\frac{\partial A_1}{\partial y}  =  & \frac{\partial \tilde{x}_p}{\partial y} +  \frac{\partial \tilde{\ell} (y)}{\alpha_t \partial y} \left(\eta^{-1} (u) - (\tilde{P}_1 )_1 \right) + \frac{\tilde{\ell}(y)}{\alpha_t} (\eta^{-1})' (u) \frac{\partial u}{\partial y} \notag\\
= & \frac{2\alpha_t -\beta_t}{\beta_t} \frac{\partial A_2}{\partial y} \left[ -1+\frac{2}{\alpha_t} (\eta^{-1} (u) -(\tilde{P}_1)_1)\right] \notag \\
& + \frac{\gamma_t \tilde{\ell}(y)}{\alpha_t \ell (y)} (\eta^{-1})' (u) \left[ 1- \frac{2}{\ell (y)} (x-x_p)\right].
\end{align}
Notice that $0 \le \eta^{-1} (u) -(\tilde{P}_1)_1 \le \alpha_t$ and $0 \le x-x_p \le \ell(y)$ for all $ (x,y) \in T_1 .$
Therefore \eqref{proof of K_f :9} together with \eqref{proof of K_f :1} and \eqref{proof of K_f :3} implies 
\begin{equation}\label{proof of K_f :10}
\left|\frac{\partial A_1 (x,y)}{\partial y}  \right| \lesssim \frac{2\alpha_t -\beta_t}{\beta_t} \frac{\partial A_2 (x,y)}{\partial y} \approx \frac{t}{\delta_t} \qquad \forall (x,y) \in T_1 .
\end{equation}
We conclude from \eqref{proof of K_f :3}, \eqref{proof of K_f :8} and \eqref{proof of K_f :10} that
\begin{equation}\label{proof of K_f :11-0}
|DA(x,y)| \lesssim \max \left \{\left|\frac{\partial A_1}{\partial x} \right|,\ \left|\frac{\partial A_1}{\partial y}\right|,\ \left|\frac{\partial A_2}{\partial x}\right|,\ \left|\frac{\partial A_2}{\partial y} \right| \right \} \lesssim \frac{t}{\delta_t}
\end{equation}
and  
\begin{equation}\label{proof of K_f :11-1}
J_A (x,y) = \frac{\partial A_1}{\partial x} \frac{\partial A_2}{\partial y} \approx \frac{t^{2s-2} \tilde{\ell}(y)}{\delta_t}
\end{equation}
for all $t\ll 1$ and all $(x,y) \in T_1 .$
Moreover by \eqref{proof of K_f :11-0}, \eqref{proof of K_f :11-1} and \eqref{proof of K_f :4-1} we have that
\begin{equation}\label{proof of K_f :11}
K_A (x,y)= \frac{|DA(x,y)|^2}{J_A (x,y)} \lesssim \frac{t^{4-2s}}{\delta_t \tilde{\ell}(y)} \lesssim \frac{t^{4(1-s)}}{\delta_t}
\end{equation}
holds for all $t\ll 1$ and all $(x,y) \in T_1 .$

We next define a diffeomorphism from $T_2$ onto $\tilde{T}_2 .$
Denote by $P_c$ and $\tilde{P}_c$ be the center of $R_t$ and $\tilde{R}_t  ,$ respectively.
Given $(x,y) \in T_2,$ we define
\begin{equation*}
B_1 (x,y) = \frac{2 \alpha_t -\beta_t}{4 \delta_t \gamma_t} \left(x- (P_c)_1 - \frac{\gamma_t}{2} \right) + (\tilde{P}_c)_1 + \frac{\alpha_t}{2} ,\ B_2 (x,y) = y \frac{a(x-(P_c)_1) +b}{c( x-(P_c)_1)+d},
\end{equation*}
where $a,\ b,\ c,\ d $ satisfy
\begin{equation}\label{proof of K_f :13}
a \gamma_t (\frac{1}{2} -\delta_t) +b = \frac{\beta_t}{4} ,\
a \frac{\gamma_t}{2} +b = \frac{\beta_t}{2},\
c \gamma_t (\frac{1}{2} -\delta_t) +d = \gamma_t (\frac{1}{2} -\delta_t) ,\
c \frac{\gamma_t}{2} +d = \frac{\gamma_t}{2}.
\end{equation}
Then 
\begin{equation}\label{proof of K_f :15}
B=(B_1 , B_2)
\end{equation}
is a diffeomorphism from $T_2$ onto $\tilde{T}_2.$ 
By \eqref{proof of K_f :1} we have that
\begin{equation}\label{proof of K_f :16}
\frac{\partial B_1 (x,y)}{\partial x} = \frac{2\alpha_t -\beta_t}{4 \delta_t \gamma_t} \approx \frac{t}{\delta_t} \qquad \forall (x,y) \in T_2.
\end{equation}
Moreover, from \eqref{proof of K_f :13} and \eqref{proof of K_f :1} we have that
\begin{equation}\label{proof of K_f :17}
\frac{\partial B_2 (x,y)}{\partial y} = \frac{a(x-(P_c)_1) +b}{c( x-(P_c)_1)+d} \approx \frac{\beta_t}{\gamma_t} \approx t^{2s-1} 
\end{equation}
and 
\begin{equation}\label{proof of K_f :18}
\left| \frac{\partial B_2 (x,y)}{\partial x} \right|= \frac{|y (ad-bc)|}{[c( x-(P_c)_1)+d]^2} \lesssim \frac{\gamma_t b}{\gamma^2 _t} \approx t^{2s-1}
\end{equation}
for all $(x,y) \in T_2.$
We then conclude from \eqref{proof of K_f :16}, \eqref{proof of K_f :17} and \eqref{proof of K_f :18} that
\begin{equation}\label{proof of K_f :19-0}
|DB (x,y)| \lesssim \max \left \{\left|\frac{\partial B_1}{\partial x} \right|,\ \left|\frac{\partial B_1}{\partial y}\right|,\ \left|\frac{\partial B_2}{\partial x}\right|,\ \left|\frac{\partial B_2}{\partial y} \right| \right \} \lesssim \frac{t}{\delta_t}
\end{equation}
and 
\begin{equation}\label{proof of K_f :19-1}
J_B (x,y)  = \frac{\partial B_1}{\partial x} \frac{\partial B_2}{\partial y} \approx \frac{t^{2s}}{\delta_t}.
\end{equation}
for all $t \ll 1$ and all $(x,y) \in T_2 .$
Moreover by \eqref{proof of K_f :19-0} and \eqref{proof of K_f :19-1} we have that 
\begin{equation}\label{proof of K_f :19}
K_B (x,y) = \frac{|DB(x,y)|^2}{J_B (x,y)} \lesssim \frac{t^{2(1-s)}}{\delta_t}
\end{equation}
for all $t \ll 1$ and all $(x,y) \in T_2 .$

We next construct a diffeomorphism $C : T_0 \rightarrow \tilde{T}_0 .$ 
By \eqref{proof of K_f :7} and \eqref{proof of K_f :15}
we have that $Q_1 Q_2$ is mapped onto $\tilde{Q}_1 \tilde{Q}_2$ by $A_1 (\cdot, \gamma_t(1/2 -\delta_t),$ and $Q_2 Q_3$ is mapped onto $\tilde{Q}_2 \tilde{Q}_3$ by $B_2 ((P_c)_1 + \gamma_t(1/2 -\delta_t) , \cdot).$
For a given $(x,y) \in T_0 , $ define
\begin{equation}\label{definition of C}
C(x,y)= \left(A_1 \big(x, \gamma_t(\frac{1}{2} -\delta_t) \big), B_2 \big( (P_c)_1 + \gamma_t(\frac{1}{2} -\delta_t) , y \big) \right).
\end{equation}
Then $C : T_0 \rightarrow \tilde{T}_0$ is diffeomorphic. 
By \eqref{proof of K_f :8} and \eqref{proof of K_f :17}, we have that 
\begin{equation*}
\frac{\partial }{\partial x} A_1 (x, \gamma_t(1/2 -\delta_t) \approx t^{2s -1},\ \frac{\partial }{y} B_2 ((P_c)_1 + \gamma_t(1/2 -\delta_t) , y) \approx t^{2s -1}
\end{equation*}
for all $(x,y) \in T_0 .$
Therefore 
\begin{equation}\label{proof of K_f :20}
|DC(x,y)| \lesssim t^{2s-1}\mbox{ and } K_C (x,y) \approx 1 
\end{equation}
for all $t \ll 1$ and all $(x,y) \in T_0 .$

Via \eqref{proof of K_f :7}, \eqref{proof of K_f :15} and \eqref{definition of C}, we redefine $f^{-1} _4: R_t \rightarrow \tilde{R}_t $ in \eqref{proof of f^-1: 10-0} as
\begin{equation}\label{proof of K_f :20-1}
f^{-1} _4 (x,y) =
\begin{cases}
A(x,y) &\forall (x,y) \in T_1 , \\
B(x,y) & \forall  (x,y) \in T_2 ,\\
\left(A_1 (x,-y) ,-A_2 (x,-y) \right), & \forall(x,y) \in T_3 ,\\
(2 (\tilde{P}_c)_1 -B_1 (2 (P_c)_1 -x,y), B_2 (2 (P_c)_1 -x,y))  &  \forall (x,y) \in T_4 ,\\
C(x,y)& \forall (x,y) \in T_0 .
\end{cases}
\end{equation} 
Like in Section \ref{existence}, by taking a fixed $j_0 \gg 1$ we then define $F_{2^{-j}} :Q_{2^{-j}} \rightarrow \tilde{Q}_{2^{-j}}$ for all $j \ge j_0$, $E_1 :\Omega_1 \rightarrow \tilde{\Omega}_1$, $E_2 :\Omega_2 \rightarrow \tilde{\Omega}_2$ and $E :\mbr^2 \rightarrow \mbr^2 .$ 
It is not difficult to see that the new-defined $E$ is a homeomorphism such that $E(z) =z^2$ for all $ z \in \overline{M_s}$ and satisfies \eqref{MFD :1} for $E$ on $\mathcal{L}^2$-a.e. $\mathbb{R}^2 .$ 
To show $E \in \mathcal{E}_s ,$ it is then enough to prove that $E \in W^{1,1} _{\loc} (\mbr^2 , \mbr^2)$ and $J_E \in L^1 _{\loc} (\mbr^2) .$
By \eqref{proof of f^-1: 6}, \eqref{proof of f^-1: 10}, \eqref{proof of K_f :11-0}, \eqref{proof of K_f :19-0} and \eqref{proof of K_f :20}, we have that
\begin{align}\label{proof of K_f :20-2}
DF_{2^{-j}} (z) = & Df^{-1} _3 (f^{-1} _4 \circ f_2 \circ f^{-1} _1 (z)) D f^{-1} _4 ( f_2 \circ f^{-1} _1 (z)) D (f_2 \circ f^{-1} _1) (z) \notag \\
\lesssim &
\begin{cases}
\frac{2^{-j}}{\delta_{2^{-j}}} &  \mathcal{L}^2 \mbox{-a.e. }z \in f_1 \circ f^{-1} _2 (\cup_{k=1} ^{4} T_k) , \\
2^{j(1-2s)}  & \mathcal{L}^2 \mbox{-a.e. }z \in f_1 \circ f^{-1} _2 (T_0) ,
\end{cases}
\end{align}
for all $j \ge j_0.$
Notice that
\begin{equation*}
\mathcal{L}^2 (T_0 ) = (\gamma_{2^{-j}} (1-2\delta_{2^{-j}}))^2 \approx 2^{-2j} ,\
\mathcal{L}^2 (T_k) = \delta_{2^{-j}} \gamma^2 _{2^{-j}} (1-\delta_{2^{-j}}) \approx \delta_{2^{-j}} 2^{-2j}
\end{equation*}
for all $k=1,2,3,4 $ and all $j \ge j_0 .$ It hence follows from \eqref{proof of f^-1: 4} that 
\begin{equation}\label{proof of K_f :22}
\mathcal{L}^2 (f_1 \circ f^{-1} _2 (T_0)) \approx 2^{-2j} ,\ 
\mathcal{L}^2 (f_1 \circ f^{-1} _2 (T_k)) \approx \delta_{2^{-j}} 2^{-2j} \quad \mbox{for all } k=1,2,3,4.
\end{equation}
By \eqref{proof of K_f :20-2} and \eqref{proof of K_f :22} we then have that
\begin{equation*}\label{proof of K_f :23-1}
\int_{Q_{2^{-j}}} |DF_{2^{-j}}| = \sum_{k=0} ^{4} \int_{f_1 \circ f^{-1} _2 (T_k)} |DF_{2^{-j}}| \lesssim 2^{-3j} + 2^{-j(2s+1)} \lesssim 2^{-3j} \qquad \forall j \ge j_0 .
\end{equation*}  
Therefore
\begin{equation}\label{proof of K_f :23-2}
\int_{\Omega_1} |DE_1| = \sum_{j=j_0} ^{\infty} \int_{Q_{2^{-j}}} |DF_{2^{-j}}| \lesssim \sum_{j=j_0} ^{\infty}  2^{-3j} <\infty .
\end{equation}
By \eqref{key tool: 1}, \eqref{proof of K_f :23-2} and the fact that $E(z)=z^2$ for all $ z \in M_s ,$ we have that $E \in W^{1,1} _{\loc} (\mathbb{R}^2 , \mbr^2) .$
Analogously to \eqref{proof of f^-1: 12-4}, we have that
\begin{equation}\label{proof of K_f :23-3}
\int_{\Omega_1} |J_{E_1}| \le \mathcal{L}^2(\tilde{\Omega}_1) <\infty .
\end{equation}
From \eqref{key tool: 1}, \eqref{proof of K_f :23-3} and the fact that $E(z)=z^2$ for all $z \in M_s ,$ we have that $J_E \in L^1 _{\loc} (\mathbb{R}^2) .$

We next show $K_{E} \in L^q _{\loc} (\mathbb{R}^2)$ for all $q < 1.$ By \eqref{proof of f^-1: 7}, \eqref{proof of f^-1: 10}, \eqref{proof of K_f :11}, \eqref{proof of K_f :19} and \eqref{proof of K_f :20}, we have that
\begin{equation}\label{proof of K_f :21}
K_{F_{2^{-j}}} (z)\lesssim
\begin{cases}
\frac{2^{4j(s-1)}}{\delta_{2^{-j}}} & \forall \ z \in f_1 \circ f^{-1} _2 (T_1 \cup T_3),\\
\frac{2^{2j(s-1)}}{\delta_{2^{-j}}} &\forall \ z \in f_1 \circ f^{-1} _2 (T_2 \cup T_4), \\
1 &\forall \ z \in f_1 \circ f^{-1} _2 (T_0).
\end{cases}
\end{equation}
for all $j \ge j_0 .$
For any $q \ge 0,$ via \eqref{proof of K_f :22} and \eqref{proof of K_f :21} we obtain that
\begin{equation*}\label{proof of K_f :21-1}
\int_{Q_{2^{-j}}} K^q _{F_{2^{-j}}} = 
\sum_{k=0} ^{4} \int_{f_1 \circ f^{-1} _2 (T_k)} K^q _{F_{2^{-j}}} 
\lesssim  \delta^{1-q} _{2^{-j}} 2^{j(4q (s-1) -2)} (1+2^{2qj(1-s)}) +2^{-2j} 
\end{equation*}
for all $j \ge j_0 .$
Therefore
\begin{align}\label{proof of K_f :21-2}
\int_{\Omega_1} K^q _{E} = & \sum_{j=j_0} ^{+\infty} \int_{Q_{2^{-j}}} K^q _{F_{2^{-j}}} \notag \\
\lesssim & \sum_{j=j_0} ^{+\infty} \exp((q-1)2^j) 2^{j(4q (s-1) -2)} (1+2^{j2q (1-s)}) +\sum_{j=j_0} ^{+\infty} 2^{-2j} <+\infty
\end{align}
for all $q \in (0,1)$ and each $s > 1.$
By \eqref{key tool: 1}, \eqref{proof of K_f :21-2} and the fact that $E$ is conformal on $M_s ,$ we conclude that $K_E \in L^q _{\loc} (\mbr^2)$ for all $ q \in (0,1) .$ 
\end{proof}

\subsection{\eqref{theorem Df+K_f :1}}

\begin{proof}[Proof of \eqref{theorem Df+K_f :1}]
Let $g :\mathbb{D} \rightarrow \Delta_s$ be conformal, where $\Delta_s$ is defined in \eqref{definition of M_s and Delta_s} with $s >1 .$
In order to prove \eqref{theorem Df+K_f :1}, via Lemma \ref{rmk inverse} ($4$) it is enough to construct $f \in \mathcal{F}_s (g)$ such that $f \in W^{1,p} _{\loc} (\mathbb{R}^2 ,\mbr^2)$ for some $p>1$ and $K_{f} \in L^q _{\loc} $ for all $q < \max\{1/(s-1), 3p/((2s-1)p+4-2s) \} .$ 

We consider the case $s \in (1,2]$ first.
Let $G_s$ be as in \eqref{rmk inverse: 0} and $\mathfrak{M}_s$ be as in \eqref{mathfrakM_s }. 
If $E \in \mathcal{E}_s$ satisfying that $E \in W^{1,p} _{\loc} (\mathbb{R}^2 , \mbr^2)$ for some $p>1$ and $K_{E} \in L^q _{\loc} $ for all $q < 1/(s-1) ,$
by Lemma \ref{composition} and the analogous arguments as for \eqref{rmk2 :9} and \eqref{rmk2 :10}, we can define $ f = E \circ G_s \circ \mathfrak{M}_s .$ 
We now let $E$ be as in \eqref{definition of E}. Then $E \in \mathcal{E}_s .$
By \eqref{proof of f^-1: 12-3}, \eqref{key tool: 1} and the fact that $E(z)=z^2$ for all $ z \in M_s,$ we obtain that $E \in W^{1,p} _{\loc} (\mathbb{R}^2 , \mbr^2)$ for all $p \ge 1 .$
From \eqref{proof of f^-1: 10}, \eqref{proof of f^-1: 12} and \eqref{proof of f^-1: 7}, it follows that  
\begin{equation*}
K_{F_{2^{-j}}} (z) = K_{f^{-1} _3} (f^{-1} _4 \circ f_2 \circ f^{-1} _1 (z)) K_{f^{-1} _4} (f_2 \circ f^{-1} _1 (z))  K_{f_2 \circ f^{-1} _1} (z) \approx 2^{(2s-2)j}
\end{equation*}
for all $j \ge j_0 $ and $\mathcal{L}^2$-a.e. $z \in Q_{2^{-j}} .$
Together with $\mathcal{L}^2(Q_{2^{-j}})\approx 2^{-2j},$ we then obtain
\begin{equation}\label{not know1}
\int_{\Omega_1} K^q _E = \sum_{j=j_0} ^{+\infty} \int_{Q_{2^{-j}}} K^q _{F_{2^{-j}}} \approx \sum_{j=j_0} ^{+\infty} 2^{-j2(1+q(1-s))} <\infty
\end{equation}
for all $q < 1/(s-1).$
By \eqref{not know1}, \eqref{key tool: 1} and the fact that $E$ is conformal on $M_s,$ we have that $K_E \in L^q _{\loc} (\mathbb{R}^2) $ for all $q <1/(s-1) .$

We turn to the case $s > 2.$ Let $M(p,s) = 3p/((2s-1)p+4-2s) $ with $p>1 .$
Analogously to the case $s \in (1,2],$ it is enough to construct $E \in \mathcal{E}_s$ such that $E \in W^{1,p} _{\loc} (\mathbb{R}^2 ,\mbr^2)$ and $K_E \in L^q _{\loc} (\mathbb{R}^2)$ for all $q \in (0, M(p,s)) .$ 
Redefining $\delta_t$ in \eqref{delta_t} as $\delta_t=t^{\frac{p+2}{p-1}} \log^{\frac{p}{p-1}} (t^{-1}).$ We follow the methods in Section \ref{proof of delta_1} to define a new $f^{-1} _4 .$
Set $j_0 \gg 1.$ There are then new $F_{2^{-j}} :Q_{2^{-j}} \rightarrow \tilde{Q}_{2^{-j}}$ for all $j \ge j_0$, $E_1 :\Omega_1 \rightarrow \tilde{\Omega}_1$, $E_2 : \Omega_2 \rightarrow \tilde{\Omega}_2$ and $E :\mbr^2 \rightarrow \mbr^2 .$ 
It is not difficult to see that the new $E$ is homeomorphic, satisfies \eqref{MFD :1} for $E$ on $\mathcal{L}^2$-a.e. $\mathbb{R}^2$ and $J_E \in L^{1} _{\loc} (\mathbb{R}^2) .$
To show that $E$ satisfies all requirements, it is enough to check that
$E \in W^{1,p} _{\loc}(\mathbb{R}^2 , \mbr^2)$ and 
$K_E \in L^{q} _{\loc}(\mathbb{R}^2)$ for all $q \in (0,M(p,s)) .$

From \eqref{proof of f^-1: 6}, \eqref{proof of f^-1: 10}, \eqref{proof of K_f :11-0}, \eqref{proof of K_f :19-0} and \eqref{proof of K_f :20} we have that
\begin{equation}\label{proof of K_f :27-1}
|DF_{2^{-j}} (z)| \lesssim
\begin{cases}
\frac{2^{-j}}{\delta_{2^{-j}}} &\forall z \in f_1 \circ f^{-1} _2 (\cup_{k=1} ^{4} T_k) ,\\
2^{j(1-2s)} & \forall  z \in f_1 \circ f^{-1} _2 (T_0) ,
\end{cases}
\end{equation}
for all $j \ge j_0 .$
It follows from \eqref{proof of K_f :27-1} and \eqref{proof of K_f :22} that
\begin{equation*}\label{proof of K_f :27}
\int_{Q_{2^{-j}}} |DF_{2^{-j}}|^p = \sum_{k=0} ^{4} \int_{f_1 \circ f^{-1} _2 (T_k)} |DF_{2^{-j}}|^p 
\lesssim  \delta^{1-p} _{2^{-j}} 2^{-j(2+p)} +2^{j(p(1-2s) -2)} .
\end{equation*}
Therefore
\begin{equation}\label{proof of K_f :28-1}
\int_{\Omega_1} |DE|^p =  \sum_{j=j_0} ^{+\infty} \int_{Q_{2^{-j}}} |DF_{2^{-j}}|^p  
\lesssim  \sum_{j=j_0} ^{+\infty} \frac{1}{j^p } + \sum_{j=j_0} ^{+\infty} 2^{-j(p(2s-1) +2)} <\infty . 
\end{equation}
By \eqref{proof of K_f :28-1}, \eqref{key tool: 1} and the fact that $E(z)=z^2$ for all $z \in M_s ,$ we conclude that $E \in W^{1,p} _{\loc}(\mathbb{R}^2 , \mbr^2) .$
By \eqref{proof of f^-1: 6}, \eqref{proof of f^-1: 7}, Lemma \ref{lemma A} and \eqref{proof of f^-1: 10}, we have 
\begin{align}\label{proof of K_f :30}
\int_{f_1 \circ f^{-1} _2 (T_1)} K^q _{F_{2^{-j}}} \approx & \int_{f_1 \circ f^{-1} _2 (T_1)} K^q _{f^{-1} _3} (f^{-1} _4 \circ f_2 \circ f^{-1} _1) K^q _{f^{-1} _4} (f_2 \circ f^{-1} _1) K^q _{f_2 \circ f^{-1} _1}  \big|J_{f_2 \circ f^{-1} _1} \big| \notag \\
\le & \int_{T_1}  K^q _{f^{-1} _3} (f^{-1} _{4} ) K^q _{f^{-1} _{4}}  \notag\\
\lesssim & \int_{T_1} K^q _{f^{-1} _{4}} 
\end{align}
for all $q \ge 0 $ and all $j \ge j_0 .$
Notice $\tilde{\ell}(\gamma_{2^{-j}}/2) =\alpha_{2^{-j}}$ and $\tilde{\ell}(\gamma_{2^{-j}}(\frac{1}{2} -\delta_{2^{-j}})) =\beta_{2^{-j}}/2$ for all $j \ge 1.$
By Fubini's theorem, \eqref{proof of K_f :11}, \eqref{proof of K_f :4-1} and \eqref{proof of K_f :1} we then have 
\begin{align}\label{proof of K_f :31}
\int_{T_1} K^q _{f^{-1} _{4}} \lesssim & \int_{\gamma_{2^{-j}}(\frac{1}{2} -\delta_{2^{-j}})} ^{\frac{\gamma_{2^{-j}}}{2}}\int_{x_p} ^{x_p +\ell (y)} \left(\frac{2^{j(2s-4)}}{\delta_{2^{-j}} \tilde{\ell} (y)} \right)^q  \, dx \, dy \notag\\
\approx & \frac{ 2^{jq(2s-4)} \gamma_{2^{-j}}} {\delta^q _{2^{-j}}} \int_{\gamma_{2^{-j}}(\frac{1}{2} -\delta_{2^{-j}})} ^{\frac{\gamma_{2^{-j}}}{2}} \frac{1}{\tilde{\ell}^q (y)}\, dy \notag\\
=& \frac{ 2^{jq(2s-4)} \gamma_{2^{-j}}}{(1-q)\delta^q _{2^{-j}}} \frac{2 \delta_{2^{-j}} \gamma_{2^{-j}}}{2\alpha_{2^{-j}}-\beta_{2^{-j}}} \left(\tilde{\ell}^{1-q} (\frac{\gamma_{2^{-j}}}{2}) - \tilde{\ell}^{1-q}(\gamma_{2^{-j}}(\frac{1}{2} -\delta_{2^{-j}})) \right) \notag\\
\lesssim & \frac{\delta^{1-q} _{2^{-j}} 2^{-2j[1+q(1-s)]}}{1-M(p,s)} 
\end{align}
for any fixed $q \in (0,M(p,s)).$ 
Combining \eqref{proof of K_f :30} with \eqref{proof of K_f :31} implies that
\begin{equation}\label{proof of K_f :32}
\int_{f_1 \circ f^{-1} _2 (T_1)} K^q _{F_{2^{-j}}} \lesssim \delta^{1-q} _{2^{-j}} 2^{-2j[1+q(1-s)]} \qquad \forall j \ge j_0.
\end{equation}
By symmetry of $f^{-1} _4$ between $T_1$ and $T_3,$ it follows from \eqref{proof of K_f :32} that 
\begin{equation}\label{proof of K_f :33}
\int_{f_1 \circ f^{-1} _2 (T_3)} K^q _{F_{2^{-j}}} = \int_{f_1 \circ f^{-1} _2 (T_1)} K^q _{F_{2^{-j}}} \lesssim \delta^{1-q} _{2^{-j}} 2^{-2j[1+q(1-s)]} 
\end{equation}
for all $j \ge j_0 .$
By \eqref{proof of K_f :21} and \eqref{proof of K_f :22}, we have that
\begin{equation}\label{proof of K_f :28}
\int_{f_1 \circ f^{-1} _2 (T_0)} K^q _{F_{2^{-j}}} \lesssim 2^{-2j}
\end{equation}
and 
\begin{equation}\label{proof of K_f :29}
\int_{f_1 \circ f^{-1} _2 (T_2 \cup T_4)} K^q _{F_{2^{-j}}} \lesssim \delta_{2^{-j}} 2^{-2j} \left(\frac{2^{2j(s-1)}}{\delta_{2^{-j}}} \right)^q  
= \delta^{1-q} _{2^{-j}} 2^{2j[q(s-1)-1]} 
\end{equation}
for all $j \ge j_0 .$
From \eqref{proof of K_f :32}, \eqref{proof of K_f :33}, \eqref{proof of K_f :28} and \eqref{proof of K_f :29}, we conclude that
\begin{align}\label{proof of K_f :38}
\int_{\Omega_1} K^q _{E} = & \sum_{j=j_0} ^{+\infty} \int_{Q_{2^{-j}}} K^q _{F_{2^{-j}}} = \sum_{j=j_0} ^{+\infty} \sum_{k=0} ^{4} \int_{f_1 \circ f^{-1} _2 (T_k)} K^q _{F_{2^{-j}}} \notag\\
\lesssim & \sum_{j=j_0} ^{+\infty} 2^{-2j} +  2^{-j \left( \frac{(p+2)(1-q)}{p-1} +2[1+q(1-s)]\right)} \log^{\frac{p(1-q)}{p-1}} \left(2^j\right).
\end{align}
Note that 
\begin{equation*}
\frac{(p+2)(1-q)}{p-1} +2[1+q(1-s)] > 0 \Leftrightarrow q < M(p,s).
\end{equation*}
It from \eqref{proof of K_f :38} follows that $\int_{\Omega_1} K^q _{E}  < \infty$ for all $q \in (0, M(p,s) ) .$
Together with \eqref{key tool: 1} and the fact that $E$ is conformal on $M_s ,$ we conclude that $K_E \in L^{q} _{\loc}(\mathbb{R}^2)$ for all $q \in (0,M(p,s)) .$ 
\end{proof}

\section{Proof of Theorem \ref{cardioid K_f}}

\begin{proof}
Let $\Delta$ be as in \eqref{Delta}.
The representation of $\partial \Delta$ in Cartesian coordinates is
\begin{equation*}
(x^2 +y^2)^2 -4x (x^2 +y^2) -4y^2 =0 .
\end{equation*}
Hence we can parametrize $\partial \Delta$ in a neighborhood of the origin as 
\begin{equation*}
\tilde{\Gamma}_0 =\{(x,y)\in \mathbb{R}^2 : x \in [-2^{-j_0},0] , y^2 =d(x)\},
\end{equation*}
where $j_0 \gg 1$ and $d(x) =\frac{- x^3 (4-x)}{2-x^2 +2x + \sqrt{1+2x}} .$
Since $d (x) \approx |x|^3 $ for all $ |x| \ll 1 ,$ 
there are $c_1 >0,\ c_2 >0$ such that 
\begin{equation*}
-c _1  x^3 \le d(x) \le -c _2 x^3 \qquad \forall x \in [-2^{-j_0},0] .
\end{equation*}
Denote 
\begin{equation*}
\tilde{\Gamma}_1 = \{(x,y)\in \mathbb{R}^2 :x \in [-2^{-j_0},0] , y^2 = -c _1 x^3\},
\end{equation*}
\begin{equation*}
\tilde{\Gamma}_2 = \{(x,y)\in \mathbb{R}^2 :x \in [-2^{-j_0},0] , y^2 = -c _2 x^3\},
\end{equation*}
\begin{equation*}
\tilde{\Gamma}_3 = \{(x,y)\in \mathbb{R}^2 : x= -2^{-j_0}, y^2 \in [c _1 (2^{-j_0})^3 , d(-2^{-j_0}) \} ,
\end{equation*}
\begin{equation*}
\tilde{\Gamma}_4 = \{(x,y)\in \mathbb{R}^2 : x= -2^{-j_0}, y^2 \in [d(-2^{-j_0}) , c _2 (2^{-j_0})^3] \} .
\end{equation*}
Let $\tilde{\Omega}_u$ and $\tilde{\Omega}_d$ be the domains bounded by $ \tilde{\Gamma}_0 \cup \tilde{\Gamma}_2 \cup \tilde{\Gamma}_4$ and $\tilde{\Gamma}_0 \cup \tilde{\Gamma}_1 \cup \tilde{\Gamma}_3 ,$ respectively.
Denote by $\Omega_u , \Omega_d$ and $\Gamma_k$ for $k=0,...,4$ the images of $\tilde{\Omega}_u , \tilde{\Omega}_d$ and $\tilde{\Gamma}_k$ under the branch of complex-valued function $z^{1/2}$ with $1^{1/2} =1 ,$ respectively.

We first prove the existence of an extension, see FIGURE \ref{extension}.
\begin{figure}[htbp]
\centering \includegraphics[bb=142 509 584 678, clip=true, scale=0.8]{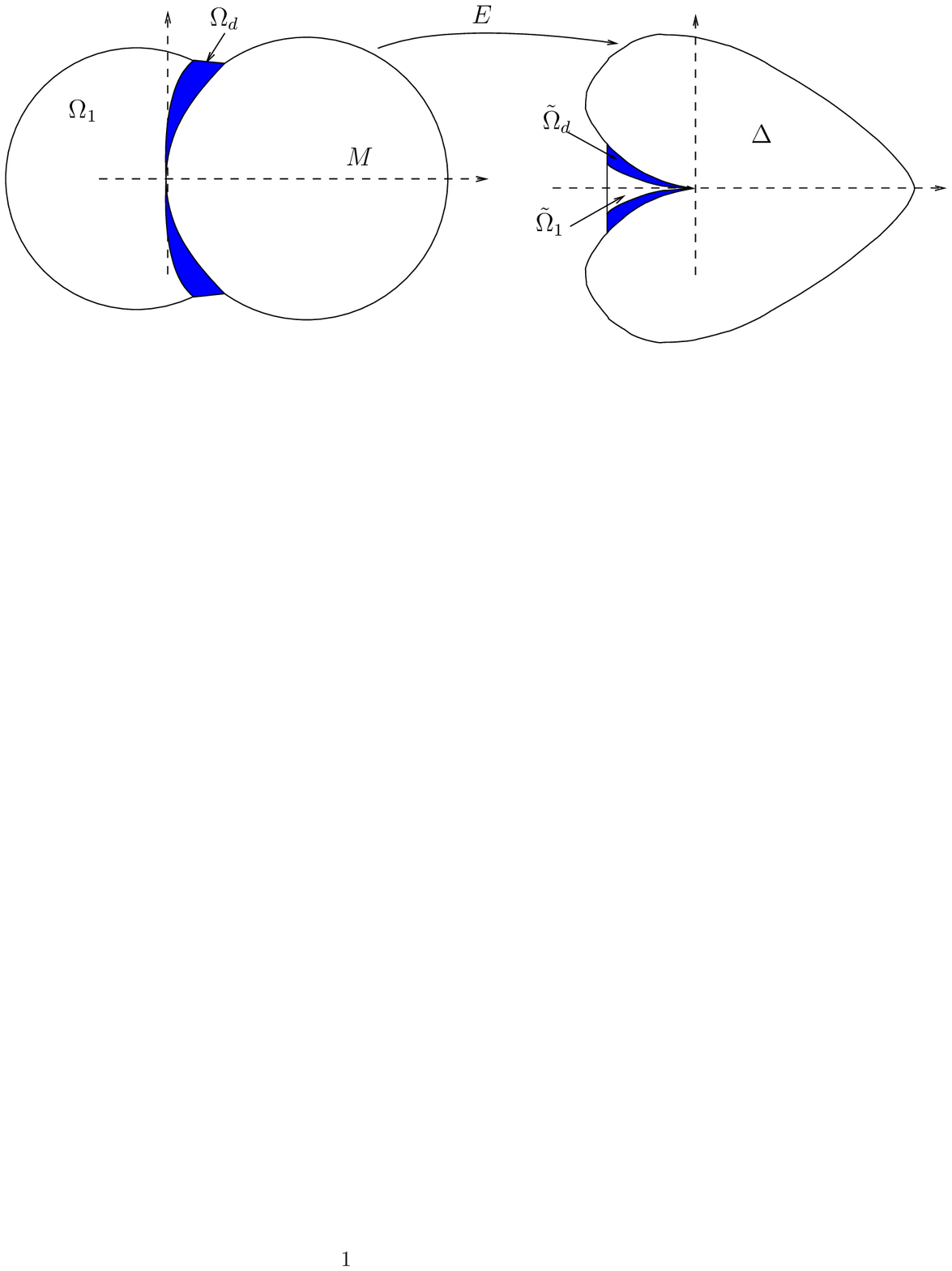}

\caption{The existence of an extension}
\label{extension}
\end{figure}
Let $r= (2^{-2j_0} +c _1 2^{-3 j_0})^{1/4} .$ Denote
\begin{equation*}
M =\{(x+1,y)\in \mathbb{R}^2 : (x,y) \in \mathbb{D}\} ,
\end{equation*}
\begin{equation*}
\Omega_1 = \overline{B(0, r)} \setminus (M \cup \Omega_d)
,\
\Omega_2 = \mathbb{R}^2 \setminus (\Omega_1 \cup \Omega_d \cup M),
\end{equation*}
\begin{equation*}
\tilde{\Omega}_1 = \{(x,y)\in \mathbb{R}^2 : x \in [-2^{-j_0} ,0], y^2 \le c _1 |x|^3 \} \mbox{ and } \tilde{\Omega}_2 = \mathbb{R}^2 \setminus (\tilde{\Omega}_1 \cup \tilde{\Omega}_d \cup \Delta) .
\end{equation*}
Analogously to the arguments in Section \ref{existence}, we define $E_1 : \Omega_1 \rightarrow \tilde{\Omega}_1$ and $E_2: \Omega_2 \rightarrow \tilde{\Omega}_2 .$ 
Here $\eta(x) = \sqrt{x} (1+c _1 x)^{1/4}$ and $s=3/2 .$ 
Define
\begin{equation}\label{proof of cardioid :1}
E(x,y)=
\begin{cases}
E_1 (x,y) &\forall \ (x,y) \in \Omega_1 ,\\
E_2 (x,y) &\forall \ (x,y) \in \Omega_2 ,\\
(x^2 -y^2 ,2xy) &\forall \ (x,y) \in M \cup \Omega_d ,
\end{cases}
\end{equation}
and $f_0 (x,y) =E(x+1 ,y).$
By the analogous arguments as in Section \ref{existence}, we have that $f_0 \in \mathcal{F}.$

We next prove \eqref{cardioid K_f :1}.
Suppose $f \in \mathcal{F} .$ Then $\hat{f} (u,v)=f(u-1,v)$ is a homeomorphism of finite distortion on $\mathbb{R}^2 $ and $\hat{f} ( M \setminus \Omega_u) = \Delta \setminus \tilde{\Omega}_u.$
By Remark \ref{lem 3.1}, we have that if $K_{\hat{f}} \in L^q _{\loc} (\mathbb{R}^2 ) $ then $q < 2 .$ 
Therefore if $K_f \in L^q _{\loc} (\mathbb{R}^2 ) $ then $q < 2 .$ 
In order to prove \eqref{cardioid K_f :1}, it then suffices to 
construct a mapping $f_0 \in \mathcal{F} $ such that $K_{f_0} \in L^q _{\loc} (\mathbb{R}^2)$ for all $q <2 .$
Let $E$ be as in \eqref{proof of cardioid :1} and $f_0 (x,y)=E(x+1 ,y) .$ 
Then $f_0 \in \mathcal{F} .$  
The same arguments as for the case $s \in (1,2)$ in Section \ref{proof of delta_1} show that $K_{E} \in L^{q} _{\loc} (\mathbb{R}^2) $ for all $q <2 .$ Therefore $K_{f_0} \in L^{q} _{\loc} (\mathbb{R}^2) $ for all $q <2 .$

The strategies to prove \eqref{cardioid K_f :0}, \eqref{cardioid K_f :2}, \eqref{cardioid K_f :3} and \eqref{cardioid K_f :4} are same as the one to prove \eqref{cardioid K_f :1}. We leave the details to the interested reader. 
\end{proof}

\section*{Acknowledgment}
The author has been supported by China Scholarship Council (project No. $20170634$ $0060$). 
This paper is a part of the author's doctoral thesis. 
The author thanks his advisor Professor Pekka Koskela for posing this question and for valuable discussions. The author thanks Aleksis Koski and Zheng Zhu for comments on the earlier draft.

\bigskip
\bibliographystyle{amsplain}

\noindent Haiqing Xu

\noindent 
Department of Mathematics and Statistics, University of Jyv\"askyl\"a, PO BOX 35, FI-40014 Jyv\"askyl\"a, Finland

\noindent 
School of Mathematical Sciences, University of Science and Technology of China, Hefei 230026, P. R. China

\noindent{\it E-mail address}:  \texttt{hqxu@mail.ustc.edu.cn}
\end{document}